\def\nP{{\mathbb{P}^{n-1}}}
\def\P2{{\mathbb{P}^{2}}}
\def\P3{{\mathbb{P}^{3}}}
\def\P5{{\mathbb{P}^{5}}}
\def\P8{{\mathbb{P}^{8}}}
\def\Sh{\hom_s(\mathbb{C}^n,\mathbb{C}^n)}
\def\Shr{\hom_s(\mathbb{C}^{n-r},\mathbb{C}^{n-r})}
\def\ho{\hom(\mathbb{C}^n,\mathbb{C}^n)}
\def\C{{\mathbb{C}}}
\def\R{{\mathbb{R}}}
\def\X{{\mathcal{X}}}
\def\O{{\mathcal{O}}}
\def\R{{\mathcal{R}}}
\def\P{{\mathbb{P}}}
\DeclareMathOperator{\Ima}{Im}
\DeclareMathOperator{\codim}{codim}
\DeclareMathOperator{\rank}{rank}
\DeclareMathOperator{\Projan}{Projan}
\DeclareMathOperator{\colength}{colength}
\newtheorem{theorem}{Theorem}[section]
\newtheorem{lemma}[theorem]{Lemma}     
\newtheorem{corollary}[theorem]{Corollary}
\newtheorem{proposition}[theorem]{Proposition}
\newtheorem{example}[theorem]{Example}
\newtheorem{remark}[theorem]{Remark}
\begin{document}

\title [Symmetric Determinantal Singularities]{Symmetric Determinantal Singularities I: The Multiplicity of the Polar Curve}

\author[T. Gaffney]{Terence Gaffney}\thanks{T.~Gaffney was partially supported by PVE-CNPq Proc. 401565/2014-9}
 \address{T. Gaffney, Department of Mathematics\\
  Northeastern University\\
  Boston, MA 02215}

\author[M. Molino]{Michelle Molino}\thanks{This paper contains work from this author's PhD dissertation at Universidade Federal Fluminense and was partially supported by Coordena\c c\~ao de Aperfei\c coamento de Pessoal de N\' ivel Superior - (Financiamento 001) and National Council for Scientific and Technological Development - CNPq} 
\address {M. Molino, Department of Mathematics\\
  Universidade Federal Fluminense\\
  Niteroi, RJ - Brazil}

\begin{abstract}
This paper is the first part of a two part paper which introduces the study of the Whitney Equisingularity of families of Symmetric determinantal singularities. This study reveals how to use the multiplicity of polar curves associated to a generic deformation of a singularity to control the Whitney equisingularity type of these curves.
\end{abstract}

\maketitle

\selectlanguage{english}

\section*{Introduction}

In this paper and part II \cite{P2}, we study the Whitney equisingularity of families of symmetric determinantal varieties. It is part of a long term effort by several researchers to connect invariants of algebraic objects (rings, ideals and modules) associated with singularities of complex spaces to equisingularity conditions. The project took off with work of Bernard Teissier in the 70s. Teissier, in \cite{CESPCW}, in the case of families of hypersurfaces, with isolated singularities, found integral closure descriptions of equisingularity conditions, Whitney A and B, and controlled these conditions using algebraic invariants, such as multiplicities of ideals. Gaffney, in a series of papers, \cite {MEIG}, \cite{SNHS}, \cite {SIDM}, extended the results of Teissier to families of complete intersection, isolated singularities, hypersurfaces with non-isolated singularities, and then constructed a framework for dealing with isolated singularities in general in \cite{PMIPME}. The approach of \cite{PMIPME} is based on pairs of modules $M,N$, $M\subset N$. The choice of $M$ is canonical: it is the module generated by the partial derivatives of the defining equations of $X$, known as the Jacobian module. The best choice of $N$ was less well understood, as some obvious choices lead to technical difficulties in calculating invariants associated to $N$. Recent work by Gaffney and Rangachev (\cite {Gaff1}) supports the following approach adapted to the symmetric case:

In this first part of the paper, we start defining the symmetric determinantal varieties and some of our objects of study, such as, the tangent space and the Jacobian and Normal modules. We extend some of the results from \cite{Gaff1} to the symmetric case. At the end we give a similiar interpretation, from the rectangular case, of the fiber of the conormal space of the symmetric determinantal variety $S_r$.

In Section two we calculate the multiplicity of the polar curve of $N(\X)$ where $\X$ is a $1$-parameter stabilization of the symmetric determinantal variety $X$. First we analyze the multiplicity for the case where $X=F^{-1}(S_{n-1})$, which is similar to the rectangular case proved by Gaffney and Rangachev in \cite{Gaff1}. After that we move to case where $X=F^{-1}(S_{n-2})$ and prove a formula to calculate the multiplicity based on an intersection number. For this last case, we give a explicit way to calculate the multiplicity by showing an equivalence between $\Projan \R(N)$ and a modification of $X$ based on the presentation matrix of the singularity. This equivalence then gives a decomposition of the multiplicity of the polar of $N$ as a sum of intersection numbers of generic plane sections with the exceptional fiber of the modification. This means we
can compute the  intersection number of the image of the section with the polar variety of $S_r$ of complementary dimension, as a sum of intersection numbers of modules naturally associated with the singularity; these intersection numbers in turn are the colengths of a collection of ideals.
 
 In section three we compute these intersection numbers, called mixed polars, as the alternating sum of intersections of modules which depend only on the presentation matrix, and give an example of a computation for a family of space surfaces. 

\section{Symmetric Determinantal Varieties and Their Properties}

In this section we  prove  results for symmetric determinantal varieties analogous to the general case where the total space is $\ho$ and $\Sigma_r$ is the set of elements of  $\ho$ of rank less than or equal to $r$. We define the normal and Jacobian module of a symmetric determinantal variety and show how to calculate the multiplicity of this pair. At the end, we prove a similar result from \cite{Gaff1} about the fiber of the conormal space of such varieties. 

Let $\Sigma_r$ be the set of all $n\times n$ matrices of rank less than equal to $r$. Then, the set of all $n\times n$ symmetric matrices of rank less than equal to $r$ is given by
$$
S_r=\{A\in \Sh \mid \rank A\le r\}=\Sigma_r\cap \Sh.
$$

%This definition can be reformulated in the following way. 
Now consider an element of $S_r$ as a map in $\Sh$. Given a map
$$
\begin{array}{lrcc}
F:& \mathbb{C}^q & \longrightarrow & \Sh\\
 & x & \longmapsto & \left(f_{ij}(x)\right)
\end{array}
$$
a determinantal variety $X\in\mathbb{C}^ q$ is the pre-image of $S_r\in\Sh$, with the expected codimension, that is, $X=F^{-1}(S_r)$ and $\codim X=\codim S_r$. 
%We can work with both definitions, since $S_r$ can be seen in $\C^q$ For now, we are going to use the first definition, looking at $S_r$ in $\Sh$.
 if we consider $F$ as the identity map on $\Sh$, then  $S_r$ is trivially a determinantal variety. 
 The following properties are fundamental in this work. The proof for each one of them is similar to the ones presented in \cite{GAC} for the general case. 

\begin{proposition}\label{prop} Let $S_r$ be a symmetric determinantal variety inside of $\Sh$. Then,
\begin{enumerate}
\item $S_r$ is irreducible;
\item The codimension of $S_r$ in the ambient space is $\frac{(n-r)(n-r+1)}{2}$;
\item The singular set of $S_r$ is exactly $S_{r-1}$;
\item The stratification of $S_r$, given by $\left\lbrace S_{i}\backslash S_{i-1}\right\rbrace$, is locally analytically trivial and hence it is a Whitney stratification of $S_r$.
\end{enumerate}
\end{proposition}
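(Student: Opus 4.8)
The plan is to dispatch the four assertions with two classical tools adapted from the rectangular case: a Grassmannian resolution giving irreducibility and the dimension count, and the congruence action of $GL_n(\C)$ together with a Schur-complement normal form that pins down the local structure along each rank stratum. Throughout I assume $r<n$, so that $S_r$ is a proper subvariety of $\Sh$.

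For (1) and (2) I would introduce the incidence variety $\widetilde{S}_r$ of pairs $(A,W)$ with $W\in\mathrm{Gr}(n-r,n)$ and $A\in\Sh$ a symmetric form vanishing on $W$ (that is, $W\subseteq\ker A$). Projection to the Grassmannian exhibits $\widetilde{S}_r$ as the total space of a vector bundle whose fibre over $W$ is the space of symmetric forms on $\C^n/W\cong\C^r$, of dimension $\frac{r(r+1)}{2}$; hence $\widetilde{S}_r$ is smooth and irreducible of dimension $r(n-r)+\frac{r(r+1)}{2}$. The first projection $\pi\colon\widetilde{S}_r\to\Sh$ has image exactly $S_r$, since every symmetric matrix of rank $\le r$ has a kernel of dimension $\ge n-r$, and it is one-to-one over the locus $\rank A=r$ (there $W=\ker A$ is forced), so it is birational onto its image. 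Irreducibility of $S_r$ then follows as the image of the irreducible $\widetilde{S}_r$, and $\dim S_r=\dim\widetilde{S}_r$; subtracting from $\dim\Sh=\frac{n(n+1)}{2}$ yields $\codim S_r=\frac{(n-r)(n-r+1)}{2}$, which is (2).

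For (3) and (4) I would use the congruence action $A\mapsto g^{\mathsf{T}}Ag$ of $GL_n(\C)$ on $\Sh$. It preserves rank, and over $\C$ two symmetric matrices are congruent precisely when they have equal rank, so the orbits are exactly the strata $S_i\setminus S_{i-1}$. Being orbits of an algebraic group, each stratum is smooth, giving $\Sing S_r\subseteq S_{r-1}$. For the reverse inclusion and for local triviality, fix $A$ of rank $s$ and put it in the form $\mathrm{diag}(I_s,0)$ by congruence. Writing a nearby symmetric matrix in block form with invertible top-left $s\times s$ block and forming the Schur complement $C=B_{22}-B_{12}^{\mathsf{T}}(I_s+B_{11})^{-1}B_{12}$ gives an analytic change of coordinates $(B_{11},B_{12},B_{22})\mapsto(B_{11},B_{12},C)$ near $A$, under which $\rank(A+B)=s+\rank C$. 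Thus near $A$ the pair $(\Sh,S_r)$ is analytically isomorphic to a smooth factor times $(\hom_s(\C^{n-s},\C^{n-s}),S_{r-s})$, in which $A$ corresponds to the cone point $0$. This product structure is precisely local analytic triviality of the stratification along $S_s\setminus S_{s-1}$ (homogeneity of the congruence action moves the base point over the whole stratum), giving (4), with the Whitney conditions following from the product structure. Finally, when $s\le r-1$ we have $r-s\ge1$, so the $(r-s+1)$-minors of $C$ vanish to order $\ge2$ at the origin; the Zariski tangent space there is the full $\hom_s(\C^{n-s},\C^{n-s})$, of dimension strictly larger than $\dim S_{r-s}$ because $\frac{(n-r)(n-r+1)}{2}>0$. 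Hence the cone point is genuinely singular, every point of $S_{r-1}$ is singular, and $\Sing S_r=S_{r-1}$, completing (3).

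The step I expect to require the most care is the Schur-complement reduction: one must verify that the indicated substitution is a biholomorphism near $A$ and that it carries the ideal of $(r+1)$-minors to the ideal of $(r-s+1)$-minors of $C$ (up to the smooth factor), so that the identification with $(\hom_s(\C^{n-s},\C^{n-s}),S_{r-s})$ is genuinely stratum-preserving. Once this normal form is established, both the determination of the singular locus and the local analytic triviality, and hence the Whitney property, follow immediately, paralleling the rectangular case treated in \cite{GAC}.
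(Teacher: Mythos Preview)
Your argument is correct. For parts (1), (2) and (4) you follow essentially the same route as the paper: the same Grassmannian incidence variety $\widetilde{S}_r$ gives irreducibility and the dimension count, and the same local product decomposition $(S_r,p)\simeq(S'_{r-s},0)\times(\C^{\,s(2n-s+1)/2},0)$ gives local analytic triviality---the paper merely states this decomposition with a reference, whereas you construct it explicitly via the Schur complement, which is a welcome addition. The one genuine difference is in part (3). The paper argues entirely through the resolution $\pi:\widetilde{S}_r\to S_r$: a rank-$r$ matrix has a unique preimage, hence $S_r\setminus S_{r-1}$ is smooth; a matrix of rank $<r$ has a positive-dimensional fibre, and the paper infers from this that the point is singular. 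That last inference is not justified as written (a proper birational morphism from a smooth source can certainly contract a positive-dimensional fibre to a smooth point). Your approach---smoothness of the open stratum from the $GL_n$-orbit description, singularity of lower-rank points from the Schur-complement normal form and a Zariski tangent space count---avoids this gap and is the cleaner argument. Your closing remark about the care needed in checking that the Schur substitution carries the ideal of $(r+1)$-minors to that of $(r-s+1)$-minors is exactly the point to verify; once done, (3) and (4) follow as you say.
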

\begin{proof}
Let $G(n-r,n)$ be a grassmannian given by all the linear subspaces of dimension $n-r$ in $\C^n$, and $\pi_1$ and $\pi_2$ be the projections 
$$
\xymatrix{
 & \Sh\times G(n-r,n)\ar[dl]_{\pi_1} \ar[dr]^{\pi_2} & \\
\Sh & & G(n-r,n)}
$$
\begin{enumerate}
\item Consider the set
$$
\widetilde{S}_r=\{(A,W)\in\Sh\times G(n-r,n)\mid \left. A\right|_W=0\}.
$$
Projection onto $G(n-r,n)$ exhibits $\widetilde{S}_r$ as an algebraic vector bundle over the grassmannian $G(n-r,n)$. This implies that $\widetilde{S}_r$ is smooth and connected, which means $\widetilde{S}_r$ is irreducible. Clearly $\pi_2$ maps $\widetilde{S}_r$ properly onto $S_r$, showing that $S_r$ is an irreducible variety of $\Sh$.

\item The sets $\widetilde{S}_r$ and $S_r$ are birationally equivalent, which means
$$
\dim S_r=\dim\widetilde{S}_r.
$$    
The dimension of the fiber $\dim(\pi_1^{-1}(W)\cap\widetilde{S}_r)$ is the same for all $W\in G(n-r,n)$. In that case, let us consider the linear space $W$ given by
$$
W=\{(x_1,\ldots,x_n)\in\C^n\mid x_1=x_2=\ldots x_r=0\}.
$$
The set of symmetric matrices $A$ such that $\left. A\right|_W=0$ is given by matrices of the following type
$$
\left(\begin{array}{c|c}
\begin{array}{ccc}
a_{11}&\ldots&a_{1r}\\
\vdots &\ddots & \vdots\\
a_{1r} & \ldots&a_{rr}
\end{array} & 0 \\
\hline
0&0
\end{array}\right).
$$  
This set is isomorphic to $\hom_s(\C^r,\C^r)$ whose dimension is $\frac{r(r+1)}{2}$. Thus, $\widetilde{S}_r$ is a vector bundle of rank $\frac{r(r+1)}{2}$. In that case,
$$
\dim\widetilde{S}_r=\dim G(n-r,n)+\dim(\pi_1^{-1}(W)\cap\widetilde{S}_r)=\frac{2rn-r^2+r}{2}. 
$$ 
Therefore, the codimension of $S_r$ is
$$
\codim S_r=\frac{n(n+1)}{2}-\dim\widetilde{S}_r=\frac{(n-r)(n-r+1)}{2}.
$$
\item First, let $A$ be a point in $S_r\backslash S_{r-1}$, that is, $A$ is a matrix of rank exactly $r$. This means that there is only one point $W\in G(n-r,n)$ such that $\left. A\right|_W=0$. Therefore, $S_{r-1}$ contain the singular set of $S_r$. Now, we need to prove the equality of these sets. On the other hand, if $A\in S_{r-1}$ then the rank of $A$ is less than $r$ and, therefore, there are more than one $W$ satisfying $\left. A\right|_W=0$, meaning that $A$ is a singular point of $S_r$. 
\item Similiar to the proof for the general case seen in \cite{BDDMF}, this can be deduced by induction from the observation that any point $p\in\left\lbrace S_{i}\backslash S_{i-1}\right\rbrace$ has a product of analytic spaces
$$
(S_r,p)\simeq(S'_{r-i},0)\times\left(\C^{\frac{i(2n-i+1)}{2}},0\right) 
$$
where $S'_{r-i}$ is the symmetric determinantal variety inside of the ambient space $\hom_s(\C^{n-i},\C^{n-i})$ and $\frac{i(2n-i+1)}{2}$ is the dimension of $S_i$.  
\end{enumerate}
\end{proof}

One of the ways of studying the equisingularity of families is by using the multiplicity of pairs of modules and their polar curves. For this, we will use two specific modules associated with the symmetric determinantal variety $X$: the Symmetric Normal module, $N(X)$, and the Jacobian module, $JM(X)$. First, we will define the modules $JM(X)$ and $N(X)$, and then we will show some results that will be useful later.

 We first consider the variety $S_r$ in $\Sh$. The \textit{Jacobian module of $S_r$} is a submodule of $\O_{S_r,0}^p$, where $p$ is the number of equations defining $S_r$, generated by the partial derivatives of these set of equations. Whereas the \textit{Symmetric Normal module of $S_r$} is the module given by the first order symmetric infinitesimal deformations of $S_r$. 

As in the general case (\cite{Gaff1}), the symmetric determinantal varieties $S_r$ are \textit{stable}, that is, varieties whose Jacobian module is equal to the module of allowable first order linear infinitesimal deformations, which in this case is the Symmetric Normal module . 

Now, let $X=F^{-1}(S_r)$ be a symmetric determinantal variety, where $F$ is a map from $\C^q$ to $\Sh$, and consider the pullback 
$$
F^*:\O_{S_r,0}^p\longrightarrow\O_{X,0}^p.
$$
Let $\delta_{i,j}$ denote the symmetric matrix with $1$ in the $(i,j)$, $(j,i)$ place, and $0$ elsewhere. The \textit{Jacobian module of $X$} is the module given by the partial derivatives of the equations defining $X$. Now, the \textit{Normal module of $X$} is defined by taking the minors of $A+t\delta_{i,j}$ composing with $F$, taking the derivative with respect to $t$ and then evaluating at $t=0$, which means $N(X)=F^*(N(S_r))$. Since $S_r$ is stable, 
$$
N(X)=F^*(N(S_r))=F^*(JM(S_r)).
$$

In the next section we want to calculate the multiplicity of the polar of $N(\X)$ in some specific cases, so we want to describe $\Projan\R(N(X))$. The description we give will apply equally to $\Projan\R(N(\X))$. 

Let $[T_{i,j}]$ be the $n$ by $n$ matrix of indeterminates, where $T_{i,j}=T_{j,i}$. We can think of $[T_{i,j}]$ as the identity map on $\Sh$. The generators of $N(X)$ are in one to one correspondence with the entries of this matrix by the procedure described above. We also have 

$$
\Projan(\R(N(X)))\simeq \Projan(\O_X[T_{i,j}]/I)
$$
where $I$ is the ideal of relations between the $T_{i,j}$ under the map which sends them to the $i,j$ generator of $\R(N(X))$. We can also see $I$ as the kernel of the map 
$$
\varphi:\O_X[T_{i,j}]\longrightarrow\R(N(X)).
$$
This will help us to prove the next proposition.

\begin{proposition} $I$ contains the entries of the matrix $[T_{i,j}][F]$.
\end{proposition}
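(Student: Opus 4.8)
The plan is to reduce the statement to a single linear (degree-one) relation among the generators of $N(X)$ and then to verify that relation by a Laplace expansion of the defining minors. Write $[F]=(f_{ij})$ and let $n_{k,m}=\varphi(T_{k,m})$ be the $(k,m)$ generator of $\R(N(X))$. The $(k,l)$ entry of the product $[T_{i,j}][F]$ is $\sum_m T_{k,m}f_{ml}$, which is homogeneous of degree one in the $T$'s. Since $\varphi$ is an $\O_X$-algebra homomorphism,
$$
\varphi\Big(\sum_m T_{k,m}f_{ml}\Big)=\sum_m f_{ml}\,n_{k,m},
$$
so it suffices to prove the module relation $\sum_m f_{ml}\,n_{k,m}=0$ in $N(X)\subseteq\O_X^p$ for every pair $(k,l)$, which then places the $(k,l)$ entry in $I=\ker\varphi$.

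Next I would lift this to the ambient variety $S_r$. Because $N(X)=F^*(N(S_r))$, and $f_{ml}=F^*a_{ml}$ while $n_{k,m}=F^*n^{S}_{k,m}$, where $a_{ml}$ are the coordinate functions on $\Sh$ and $n^{S}_{k,m}$ are the generators of $N(S_r)$, the expression above is the image under the module map $F^*$ of $\sum_m a_{ml}\,n^{S}_{k,m}$. Hence it is enough to establish the universal relation
$$
\sum_m a_{ml}\,n^{S}_{k,m}=0 \quad\text{in } N(S_r),
$$
and then pull back by $F^*$.

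To check the universal relation I would compute componentwise against the defining $(r+1)$ minors. Let $\mu_{I,J}$ be the minor of $A=(a_{ij})$ on rows $I$ and columns $J$, $|I|=|J|=r+1$. By definition the $(I,J)$ component of $n^{S}_{k,m}$ is $\frac{d}{dt}\big|_{t=0}\mu_{I,J}(A+t\delta_{k,m})$, which for $k\ne m$ is the signed cofactor of the submatrix $A_{I,J}$ (rows $I$, columns $J$) at position $(k,m)$ when $k\in I,\ m\in J$, plus the signed cofactor at $(m,k)$ when $m\in I,\ k\in J$ (a single cofactor when $k=m$). Substituting into $\sum_m a_{ml}\,(n^{S}_{k,m})_{I,J}$ and using $a_{ml}=a_{lm}$, each of the two resulting sums is precisely the Laplace expansion of a determinant in which row $k$ (respectively column $k$) of $A_{I,J}$ has been replaced by the entries of row $l$ (respectively column $l$). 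That determinant vanishes identically when $l\in I$ (resp. $l\in J$), since it then has a repeated row or column; otherwise it is again an $(r+1)$ minor of $A$, up to sign, and so lies in the ideal $I_{S_r}$. In every case the component vanishes on $S_r$, which yields the universal relation and, after pullback, the desired membership.

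The one point that needs care is the bookkeeping forced by symmetry: for $k\ne m$ the generator $n^{S}_{k,m}$ carries two cofactor contributions rather than one, so I must verify that both the row-replacement and the column-replacement Laplace expansions really reconstitute genuine $(r+1)$ minors, and track the signs so that each is identified with $\pm\mu_{(I\setminus\{k\})\cup\{l\},J}$ or $\pm\mu_{I,(J\setminus\{k\})\cup\{l\}}$ (or is zero). Once that identification is secured, vanishing on $S_r$ is immediate and the reduction through $F^*$ is formal.
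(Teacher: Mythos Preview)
Your reduction to a universal relation on $S_r$ and the Laplace/cofactor computation is the right idea, and it is exactly the calculation at the heart of the paper's argument. But the gap you flag as ``bookkeeping'' is real and is not fixable by sign-tracking. Writing $c_{I,J}(p,q)$ for the signed cofactor at position $(p,q)$ in $A_{I,J}$, your decomposition actually reads
\[
\sum_m a_{ml}\,(n^{S}_{k,m})_{I,J}
\;=\;\sum_m a_{ml}\,c_{I,J}(k,m)\;+\;\sum_m a_{ml}\,c_{I,J}(m,k)\;-\;a_{kl}\,c_{I,J}(k,k),
\]
because for $m=k$ the symmetric perturbation $\delta_{k,k}$ contributes a \emph{single} cofactor, not two. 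The first two sums are indeed your row- and column-replacement determinants, but the correction $a_{kl}\,c_{I,J}(k,k)$ is an entry times an $r\times r$ cofactor and does not lie in the ideal of $(r{+}1)$ minors. The smallest case already shows this: for $n=2$, $r=1$ the $(1,1)$ entry of $[T][A]$ maps under your $\varphi$ to $a_{11}a_{22}-2a_{12}^{2}$, which equals $-a_{12}^{2}\ne 0$ on $S_1$. So with generators built from the symmetric $\delta_{k,m}$, the relation you are aiming for is simply false.

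The paper circumvents this by lifting to the non-symmetric ambient space $\ho$ first: there the coordinates $m_{ij}$ are independent, each $T_{ij}$ hits a \emph{single} cofactor $\partial\Delta/\partial m_{ij}$, and the Laplace argument goes through cleanly to put the entries of $[T]^{t}[M]$ and $[M][T]^{t}$ in the kernel. One then symmetrises by forming $([T]^{t}+[T])[M]$, checks this against the extra equations $m_{ij}-m_{ji}$ cutting out $S_r\subset\ho$, and finally restricts to $[T]^{t}=[T]$ to conclude for $[T][M]$. The passage through $\ho$ is what supplies the missing second copy of the diagonal cofactor; your direct symmetric computation needs either this detour or an equivalent rescaling of the diagonal generators to close.
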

\begin{proof} We work first with the variety $S_r$ inside of $\Sh$ and $M$ the identity map with coordinates $m_{ij}$, then extend to the case of general $F$. In fact for this first case it is convenient to work first with  $\Sigma_r$ inside $\ho$, Thus, we have a map 
$$
\begin{array}{lclc}
\varphi: & \O_{\Sigma_r}[T_{i,j}] & \longrightarrow & \R(JM(\Sigma_r))\\
				&	T_{i,j} & \longmapsto & \left(\frac{\partial\Delta_1}{\partial m_{ij}},\dots,\frac{\partial\Delta_q}{\partial m_{i,j}}\right)\\
\end{array}
$$
where $\Delta =(\Delta_1,\ldots,\Delta_q)$ is the vector of minors of $\ho$ of size $r+1$. We first prove that the entries $[T_{i,j}]^t[M]$ and $[M][T_{i,j}]^t$ are in the kernel of $\varphi$.
%and consequently are part of the ideal defining $\Projan\R(JM(S_r))$. 
A typical entry of $[T_{i,j}]^t[M]$ is equal to 
$$
\sum_{t=1}^n T_{ti}m_{ts}
$$
and, therefore,
$$
\varphi\left(\sum_t T_{ti}m_{ts}\right)=\sum_{t=1}^n m_{ts}\varphi(T_{ti})=\sum_{t,q} m_{ts}\frac{\partial\Delta_q}{\partial m_{ti}}.
$$  
Now, let us fix $q$. Observe if $\Delta_q$ is the minor of a submatrix which does not contain $m_{ti}$ then 
$$
\frac{\partial\Delta_q}{\partial m_{ti}}=0.
$$
So we may assume $\Delta_q$ contains $m_{ti}$. Now, $\frac{\partial\Delta_q}{\partial m_{ti}}=0$ is just the cofactor of $m_{ti}$ so 
$$
\sum_{t,q} m_{ts}\frac{\partial\Delta_q}{\partial m_{ti}}
$$
is just the expansion of $\Delta_q$ but with $m_{ts}$ replacing $m_{ti}$. So this is either zero if $s\neq i$ and $m_{ts}$ is already a part of $\Delta_q$, or $\Delta_q$ if $s=i$, or another minor if $s\neq i$ and $m_{ts}$ is not part of $\Delta_q$. In any event, all terms with fixed  $q$ are zero. The computation for $[M][T_{i,j}]^t$ is similar.

Now we pass to the symmetric case. We view $S_r$ as a subset of $\ho$. Its equations are $\{\Delta_q\}$ as before and $\{(m_{ji}-m_{ji})\}$. So we have the map
$$
\begin{array}{lclc}
\varphi: & \O_{S_r}[T_{i,j}] & \longrightarrow & \R(JM(S_r))\\
				&	T_{i,j} & \longmapsto & \left(\frac{\partial\Delta}{\partial m_{ij}},\frac{\partial(m_{st}-m_{ts})}{\partial m_{i,j}}\right)\\
\end{array}
$$
For this case, we claim that the entries of $([T]^t+[T])[M]$ are in the kernel of $\varphi$. Indeed, the condition for a element to be in the kernel of $\varphi$ falls into $2$ parts: the first is $v\cdot\Delta=0$ and the second is that $v\cdot(m_{ij}-m_{ji})=0$. The computations for general matrices done above, apply to satisfying the first part of the condition. %This applies to $[M][T_{i,j}]^t$ and $[T_{i,j}]^t[M]$. 
Note that if the entries of $[M][T_{i,j}]^t$ satisfy the first part, then so do the entries of $\left([M][T_{i,j}]^t\right)^t=[T_{i,j}][M]$ since $[M]$ is symmetric. So the entries of $([T]^t+[T])[M]$ satisfy the first part. 

We claim the sum satisfies the second part. The element of $([T]^t[M]+[T][M]$ in position $ij$ is
$$
\sum_t T_{ti}m_{ti}+\sum_t T_{it}m_{ti}.
$$
Under $\varphi$ this produces the vector field
$$
\sum m_{ti}\frac{\partial}{\partial m_{ti}}+\sum m_{ti}\frac{\partial}{\partial m_{it}}.
$$
Applying this to $(m_{ti}-m_{it})$ gives us
$$
m_{ti}\frac{\partial m_{ti}}{\partial m_{ti}}-m_{ti}\frac{\partial m_{it}}{\partial m_{it}}=0.
$$
If we restrict the matrix to the symmetric case, that is, $[T_{i,j}]^t=[T_{i,j}]$ then the entries of the matrix $[T_{i,j}][M]$ are zero as are the entries of $[M][T_{i,j}]$.

%The same argument can be used to prove the case where $F$ is any map and $X=F^{-1}(S_r)$. 
Passing to the general case, $F$ induces maps, $F^*\: \O_{\Sigma_r}[T_{i,j}]\to \O_{X}[T_{i,j}]$ and $\hat F^*\: \mathcal R(JM(\Sigma_r))\to \mathcal R(F^*(JM(\Sigma_r)))$.
The commutativity of the induced diagram then implies that the entries of the matrix $[T_{i,j}][F]$ are in the kernel of the map from $\O_{X}[T_{i,j}]$ to $\mathcal R(F^*(JM(\Sigma_r)))$, which implies the result.
\end{proof}
%The way we set-up, the $\Projan\R(N(X))$ is the set of points that correspond to tangent hyperplanes to the symmetric varieties. Each point in the $\Projan\R(N(X))$ can be seen as a tangent hyperplane to $S_r$ but they are also tangent hyperplanes in the set of rectangular matrices. That is because looking at the proof of the last proposition, we have used not only the minors for the equations, but also the equations for the symmetric matrices. That is, they are tangent hyperplanes in the set of all rectangular matrices that contains the tangent space to $S_r$. If we put the additional condition $T_{i,j}=T_{j,i}$, then we have the tangent hyperplanes inside of the set of symmetric matrices. 

\begin{remark} The proposition above allows us to conclude that $[T_{i,j}]F=0$ are some of the equations of $\Projan(\R(N(X)))$.
\end{remark} 

In the next results we describe the fiber of the conormal modification of $S_r$. %viewed as a subset of $\Shr$. The  theorem is crucial for some further results. 
The polar varieties of $S_r$ are intimately connected with our invariant. Since the polar varieties are obtained by intersecting the conormal $C(S_r)$ of $S_r$ with enough generic hyperplanes, then projecting to $S_r$, whether the polar varieties are empty or not depends on the dimension of the fibers  $C(S_r)$.

Let $C=\{(A,B), A\in S_r, B\in {\mathbb{P}}S_{n-r}, AB=0\}$. Note that $C$ is the set of points in $S_r\times{\mathbb{P}}S_{n-r}$ which satisfy the equations of the previous proposition. Our next goal is to show that $C=C(S_r)$.

\begin{lemma} $C_{|S_r-S_{r-1}}=C(S_r)_{|S_r-S_{r-1}}$
\end{lemma}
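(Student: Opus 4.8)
The plan is to reduce the claimed identity to a pointwise statement about conormal fibers over the smooth stratum, and then verify it at a single normal-form point using the congruence action. First I would fix the duality, identifying the dual of $\Sh$ with $\Sh$ itself through the trace pairing $\langle X,Y\rangle=\operatorname{tr}(XY)$, so that a conormal direction at a point $A$ is recorded by a symmetric matrix $B$. With this identification the definition of the conormal space gives, over the smooth stratum,
$$C(S_r)_{|S_r-S_{r-1}}=\{(A,[B])\mid A\in S_r-S_{r-1},\ B\neq 0,\ \operatorname{tr}(BZ)=0\ \text{for all}\ Z\in T_AS_r\}.$$
Hence the lemma is equivalent to the fiberwise equality $(T_AS_r)^{\perp}=\{B\in\Sh\mid AB=0\}$ for every $A$ of rank exactly $r$. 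I would also record that for such $A$ the condition $AB=0$ forces $\Ima B\subseteq\ker A$, so $\rank B\le n-r$ holds automatically; thus the rank constraint defining $\P S_{n-r}$ adds nothing over the smooth stratum, and the fiber of $C$ there is the linear projective space $\P\{B\in\Sh\mid AB=0\}$.

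Next I would exploit homogeneity to reduce to a single point. The group $GL_n(\C)$ acts by congruence $g\cdot A=gAg^{t}$; this preserves $S_r$, and by the classification of symmetric bilinear forms over $\C$ it acts transitively on the matrices of rank exactly $r$, so the orbit of $A_0=\left(\begin{smallmatrix}I_r&0\\0&0\end{smallmatrix}\right)$ is all of $S_r-S_{r-1}$. Both sides of the desired identity are equivariant: tangent spaces are carried by the differential of the action, while the equation $AB=0$ is preserved under the contragredient action $B\mapsto(g^{-1})^{t}Bg^{-1}$, which is exactly the trace-dual of congruence and therefore the correct transformation of conormal directions. It is then enough to verify the identity at the base point $A_0$.

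At $A_0$ the verification is a short computation. Since the orbit fills the smooth stratum, $T_{A_0}S_r$ is the image of the differential of the orbit map, namely $\{XA_0+A_0X^{t}\mid X\in M_n(\C)\}$, which one checks to be the space of symmetric matrices whose lower-right $(n-r)\times(n-r)$ block vanishes. Imposing $\operatorname{tr}(BZ)=0$ for all such $Z$ forces $B$ to be supported on that lower-right block, i.e. $A_0B=0$; both spaces have dimension $\frac{(n-r)(n-r+1)}{2}=\codim S_r$, which gives the fiberwise equality and, by the reduction above, the lemma.

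The step I expect to require the most care is the duality bookkeeping: the trace form double-counts the off-diagonal entries of a symmetric matrix, so one must keep the identification $\check{\Sh}\cong\Sh$ consistent. This is only a rescaling of coordinates and does not change which matrices are orthogonal, so the set-level equalities are unaffected; still, one should check cleanly that the congruence action lifts to conormal directions as the stated contragredient action---equivalently, that the equations $AB=0$ coming from the previous proposition are equivariant---and that transitivity genuinely holds over $\C$. A reader who prefers to avoid the group action can instead compute $T_AS_r$ directly at a general rank-$r$ point in a Schur-complement chart, at the price of heavier algebra.
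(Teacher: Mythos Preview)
Your argument is correct, and it proceeds along a genuinely different line from the paper's. The paper handles one inclusion by invoking the preceding proposition (the entries of $[T_{i,j}][F]$ lie in the ideal of relations), and for the reverse inclusion it embeds $S_r-S_{r-1}$ into the smooth stratum $\Sigma_r-\Sigma_{r-1}$ of the full matrix space $\ho$, then cites the analogous result already established for $\Sigma_r$ in \cite{Gaff1}: if $AB=0$ with $A,B$ symmetric, then $AB^t=B^tA=0$, so $B$ is a tangent hyperplane to $\Sigma_r$ at $A$ in $\ho$, and restricting to the symmetric slice gives the claim. Your route is instead a direct, self-contained computation: you reduce by the congruence action to the normal form $A_0=\left(\begin{smallmatrix}I_r&0\\0&0\end{smallmatrix}\right)$, compute $T_{A_0}S_r$ as the image of the orbit-map differential, and read off its trace-orthogonal complement. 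The paper's approach is shorter precisely because it outsources the core linear-algebra identification to the rectangular case; yours avoids that dependence and makes the conormal fiber completely explicit at the base point, at the cost of setting up the equivariance and duality bookkeeping you flag at the end. Either is fine here; your version has the advantage of not needing the external reference, and the normal-form computation you sketch is exactly the one appearing (in commented-out form) elsewhere in the paper.
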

\begin{proof} If $(A,B)\in C(S_r)_{|S_r-S_{r-1}}$, then $AB=0$ by the previous proposition hence in $C_{|S_r-S_{r-1}}$. So suppose  $(A,B)\in C_{|S_r-S_{r-1}}$. Since $A$and $B$ are symmetric $AB=0$ implies $AB^t=B^tA=0$.This implies by \cite{Gaff1}, that $B$ defines a tangent hyperplane to $\Sigma^r-\Sigma^{r-1}\subset \ho$ at $A$.
Since $B$ is symmetric and $S_r-S_{r-1}\subset \Sigma^r-\Sigma^{r-1}$ is a smooth embedding, $B$ defines a tangent hyperplane to $S_r-S_{r-1}$ at $A$ in $\Sh$.

\end{proof}

\begin{corollary} \label {open case}$\overline{C_{|S_r-S_{r-1}}}=C(S_r)$, and $C(S_r)$ is an component of $C$
\end{corollary}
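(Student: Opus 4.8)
The plan is to deduce both assertions from the Lemma together with the definition of the conormal space and a single topological principle: the closure of a nonempty irreducible open subset of an algebraic set is an irreducible component. Recall first that, by definition, $C(S_r)$ is the closure in $S_r\times{\mathbb{P}}S_{n-r}$ of the conormal over the smooth locus, that is $C(S_r)=\overline{C(S_r)_{|S_r-S_{r-1}}}$, where we have used Proposition \ref{prop}(3) to identify $\Sing S_r=S_{r-1}$, so that $S_r-S_{r-1}$ is exactly the smooth locus. The Lemma gives $C(S_r)_{|S_r-S_{r-1}}=C_{|S_r-S_{r-1}}$, and taking closures on both sides yields the first assertion $\overline{C_{|S_r-S_{r-1}}}=C(S_r)$ at once.

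For the second assertion I would first record that $C(S_r)\subseteq C$. Indeed $C$ is closed, being cut out by the equations $AB=0$ inside $S_r\times{\mathbb{P}}S_{n-r}$, and it contains $C_{|S_r-S_{r-1}}$, hence it contains the closure $\overline{C_{|S_r-S_{r-1}}}=C(S_r)$. Next I would observe that $C_{|S_r-S_{r-1}}$ is open in $C$: it is the complement in $C$ of the locus $\{(A,B)\in C\mid A\in S_{r-1}\}$, which is closed since $S_{r-1}$ is closed in $S_r$. Finally, $C_{|S_r-S_{r-1}}$ is irreducible: by the Lemma it equals $C(S_r)_{|S_r-S_{r-1}}$, the conormal of the smooth, connected (by Proposition \ref{prop}(1)) variety $S_r-S_{r-1}$, which fibers over its base as a projectivized conormal bundle with irreducible fibers and is therefore irreducible.

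With these three facts in hand the conclusion is purely topological. A nonempty irreducible open subset $U$ of an algebraic set $C$ lies in a unique irreducible component $Z$ of $C$ (the components meet $U$ in relatively closed subsets that cover it, and an irreducible set cannot be a finite union of proper closed subsets, so one of them is all of $U$); moreover $U$ is nonempty and open in $Z$, hence dense in the irreducible $Z$, so $\overline{U}=Z$. Applying this to $U=C_{|S_r-S_{r-1}}$ shows that $C(S_r)=\overline{C_{|S_r-S_{r-1}}}$ is an irreducible component of $C$.

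I do not expect a hard technical obstacle here, since the statement is essentially a formal consequence of the Lemma; the care goes into the two inputs feeding the topological principle, namely the openness of $C_{|S_r-S_{r-1}}$ in $C$ and the irreducibility of the conormal over the smooth, connected locus. The genuinely important conceptual point—and the reason the Corollary asserts only that $C(S_r)$ is \emph{a} component rather than all of $C$—is that $C$ may carry further components supported entirely over $S_{r-1}$, arising from symmetric $B$ of rank at most $n-r$ with $AB=0$ at singular matrices $A$ where the kernel jumps in dimension. The argument above is deliberately arranged to avoid any dimension count, so that these possible extra components need not be analyzed in order to isolate $C(S_r)$.
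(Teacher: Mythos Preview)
Your argument is correct and follows essentially the same route as the paper: both obtain the first assertion by combining the Lemma with the fact that $C(S_r)=\overline{C(S_r)_{|S_r-S_{r-1}}}$, and both conclude the second assertion from irreducibility together with the inclusion $C(S_r)\subset C$. The only cosmetic difference is that the paper invokes irreducibility of $C(S_r)$ directly from irreducibility of $S_r$, whereas you establish irreducibility of the open piece $C_{|S_r-S_{r-1}}$ via its bundle structure and then pass to the closure; your version is more explicit about why an irreducible closed set containing a nonempty open subset of $C$ must be a component, a step the paper leaves implicit.
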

\begin{proof} Since  $C_{|S_r-S_{r-1}}=C(S_r)_{|S_r-S_{r-1}}$,  $\overline {C_{|S_r-S_{r-1}}}=\overline{C(S_r)_{|S_r-S_{r-1}}}$. But $\overline{C(S_r)_{|S_r-S_{r-1}}}=C(S_r)$. Since $S_r$ is irreducible, so is $C(S_r)$, so it is a component of $C$. 
\end{proof}

\begin{remark} \label{group action} There is a $GL(n, \C)$ action on $C$ which preserves the rank of $A$ and $B$, given by $S\cdot(A,B)=(S^tAS, S^{-1}B(S^{-1})^t)$.\end{remark}

For the next proof, it is convenient to decompose the ${\mathbb C}^n$ as ${\mathbb C}^s\oplus{\mathbb C}^{r-s}\oplus{\mathbb C}^{n-r}$, with $s<r<n$ and to write 
${\mathbb C}^s\oplus{\mathbb C}^{r-s}$ as ${\mathbb C}^r$, ${\mathbb C}^{r-s}\oplus{\mathbb C}^{n-r}$ as ${\mathbb C}^{n-s}$. Let $I_u$ denote the identity map on ${\mathbb C}^u$, with $u=s, r, n-r, n-s, r-s$

\begin{lemma} Suppose $A=I_s\in S_r$, $s<r<n$, then $C_A=C(S_r)_A$.
\end{lemma}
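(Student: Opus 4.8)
The inclusion $C(S_r)_A\subseteq C_A$ is immediate from Corollary \ref{open case}, since $C(S_r)\subseteq C$. So the real content is the reverse inclusion $C_A\subseteq C(S_r)_A$, and I would begin by describing the fibre $C_A$ explicitly. Writing $A=I_s$ in the block form coming from $\mathbb{C}^n=\mathbb{C}^s\oplus\mathbb{C}^{r-s}\oplus\mathbb{C}^{n-r}$, the equation $AB=0$ forces the first $s$ rows of $B$ to vanish; since $B$ is symmetric, the first $s$ columns vanish as well, so
$$
B=\begin{pmatrix}0&0\\0&B'\end{pmatrix},\qquad B'\in\hom_s(\mathbb{C}^{n-s},\mathbb{C}^{n-s}),\quad \rank B'=\rank B\le n-r .
$$
Hence $C_A$ is precisely the projectivization of the symmetric determinantal variety $S'_{n-r}\subset\hom_s(\mathbb{C}^{n-s},\mathbb{C}^{n-s})$ of symmetric maps of rank at most $n-r=(n-s)-(r-s)$, which is irreducible by Proposition \ref{prop}.

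To establish $C_A\subseteq C(S_r)_A$, the plan is to show that every $B\in C_A$ is a limit, \emph{inside} $C(S_r)$, of conormal pairs lying over the smooth stratum $S_r\setminus S_{r-1}$, where the previous Lemma already identifies $C$ and $C(S_r)$. Fix $B\in C_A$ and let $V\subseteq\mathbb{C}^{n-s}$ be an $(n-r)$-dimensional subspace containing $\Ima B$ (possible since $\rank B\le n-r\le n-s$). Choose a complement $\mathbb{C}^{n-s}=V\oplus U$ with $\dim U=r-s$, and a symmetric endomorphism $P$ of $\mathbb{C}^{n-s}$ with image $U$ and kernel $V$, so that $\rank P=r-s$. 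Set
$$
A_{\varepsilon}=\begin{pmatrix}I_s&0\\0&\varepsilon P\end{pmatrix}.
$$
Then $A_{\varepsilon}$ is symmetric of rank $s+(r-s)=r$ for $\varepsilon\neq0$, its kernel is $V\supseteq\Ima B$ so that $A_{\varepsilon}B=0$, and $A_{\varepsilon}\to I_s=A$ as $\varepsilon\to0$. In the decomposition fixed just before the lemma one may even take $V=\mathbb{C}^{n-r}$ and $P$ the identity on $\mathbb{C}^{r-s}$, after first normalizing $B$ by the action of Remark \ref{group action}.

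For $\varepsilon\neq0$ the pair $(A_{\varepsilon},B)$ lies in $C$ over a point $A_{\varepsilon}\in S_r\setminus S_{r-1}$, so by the previous Lemma it lies in $C(S_r)_{|S_r-S_{r-1}}\subseteq C(S_r)$. Since $C(S_r)$ is closed, letting $\varepsilon\to0$ gives $(A,B)=(I_s,B)\in C(S_r)$, that is, $B\in C(S_r)_A$. As $B\in C_A$ was arbitrary, this yields $C_A\subseteq C(S_r)_A$, and together with the reverse inclusion it proves $C_A=C(S_r)_A$. The step requiring genuine care — and which I regard as the heart of the argument — is the construction of the family $A_{\varepsilon}$: the kernel dimension must jump from $n-r$ up to $n-s$ in the limit, so one must exhibit rank-$r$ symmetric matrices degenerating to $I_s$ while still annihilating the fixed $B$. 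The block construction above is exactly what makes this possible, and it shows that the conormal fibre does not genuinely grow as $A$ specializes from the smooth stratum down to $I_s$.
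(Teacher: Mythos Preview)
Your argument is essentially the paper's: both show $C_A\subseteq C(S_r)_A$ by exhibiting, for each $B$ in the fibre, a one-parameter family $A_\varepsilon$ of rank-$r$ symmetric matrices through $I_s$ with $A_\varepsilon B=0$, so that $(A_\varepsilon,B)$ lies over the smooth stratum and hence in $C(S_r)$ by the preceding lemma. The paper first reduces to the dense case $\rank B=n-r$ and then normalizes via $S^tBS=I_{n-r}$, taking the line $I_s+t\,SI_{r-s}S^t$; you instead treat all $B$ at once by choosing an $(n-r)$-dimensional $V\supseteq\Ima B'$.

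One small slip: for a symmetric $P$ one always has $\Ima P=(\ker P)^\perp$, so once $\ker P=V$ is fixed the image is forced to be $V^\perp$, which need not be a direct-sum complement of $V$ (take $V$ isotropic for the bilinear form; this can already occur when $\rank B'=n-r$, so the freedom in choosing $V$ does not save you). Hence a symmetric $P$ with image an \emph{arbitrary} complement $U$ need not exist. This does not damage the argument, because you never actually use $\Ima P=U$: what is needed is only $\ker P\supseteq\Ima B'$ and $\rank P=r-s$, and $P=\sum_i w_iw_i^{t}$ for any basis $\{w_i\}$ of $V^\perp$ supplies both (since $\dim V^\perp=r-s$ regardless of whether $V$ is non-degenerate). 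With that adjustment your proof goes through and is in fact slightly tidier than the paper's, as it avoids the density reduction to maximal-rank $B$.
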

\begin{proof} $\hom_s(\mathbb{C}^{n-s},\mathbb{C}^{n-s})$ is embedded in $\Sh$ by the inclusion of ${\mathbb C}^{n-s}$ in ${\mathbb C}^{n}$ and by extension over 
${\mathbb C}^{s}$ by $0$. With this identification the fiber, $C_A$, of $C$ over $A$ is ${\mathbb P}(S_r)$, $S_r\subset \hom_s(\mathbb{C}^{n-s},\mathbb{C}^{n-s})$.
If $B\in {\mathbb P}(S_r)$, it suffices to prove $B\in C(S_r)_A$ with rank of $B=n-r$. Now we can choose an invertible $S$ such that $S^tBS=I_{n-r}$. Now
$$0=I_r I_{n-r}=I_rS^tBS=I_rS^t(B).$$
Consider the line $L$ in $S_r$ parameterized by $I_s+t(SI_{r-s}S^t)$. Since $(I_s+t(SI_{r-s}S^t))B=I_sB+t(S(I_{r-s}S^t)B)=0$, $(I_r+t(SI_{n-r}S^t),B)$ is a line in $C$ passing through $A$ at $t=0$. Since, for $t\ne 0$ the points on $L$ have rank $r$, by the previous lemma, the line in $C$ lies in $C(S_r)$, hence $B\in C(S_r)$.
 \end{proof}
\begin{theorem} $C(S_r)_A=\{(A,B), A\in S_r, B\in {\mathbb{P}}S_{n-r}, AB=0\}$.
\end{theorem}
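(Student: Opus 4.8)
Since the set on the right is precisely the fiber $C_A$ of $C$ over $A$, the statement is equivalent to the fiberwise equality $C_A=C(S_r)_A$ for every $A\in S_r$. One inclusion is already in hand: by Corollary \ref{open case} the conormal $C(S_r)$ is a component of $C$, whence $C(S_r)\subseteq C$ and so $C(S_r)_A\subseteq C_A$. The plan is to establish the reverse inclusion by reducing an arbitrary $A$ to a normal form and transporting the two special cases already proved.

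The main idea is congruence. Over $\C$ every symmetric matrix $A$ of rank $s$ is congruent to $I_s$, i.e.\ there is an invertible $S$ with $S^tAS=I_s$, where $s=\rank A\le r$. I would carry the problem to this normal form along the $GL(n,\C)$-action of Remark \ref{group action}, namely $S\cdot(A,B)=(S^tAS,S^{-1}B(S^{-1})^t)$. The decisive observation is that this action preserves not only the incidence variety $C$ but also the conormal $C(S_r)$: congruence $A\mapsto S^tAS$ is an automorphism of $S_r$ preserving the rank stratification, so it sends the conormal bundle over the smooth locus $S_r-S_{r-1}$ to itself and, after passing to closures, sends $C(S_r)$ to $C(S_r)$. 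Using the trace pairing $\langle A,B\rangle=\operatorname{tr}(AB)$ and the fact that the linear map $A\mapsto S^tAS$ is its own differential, one sees that a conormal direction $B$ at $A$ is carried to $S^{-1}B(S^{-1})^t$, which is exactly the $B$-component prescribed by the action. Hence the action restricts to an isomorphism of the fiber $C_A$ with $C_{I_s}$ that simultaneously carries $C(S_r)_A$ onto $C(S_r)_{I_s}$.

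With this equivariance available, I would split on $s=\rank A$. If $s=r$, then $A\in S_r-S_{r-1}$ and the Lemma on the smooth locus already gives $C_A=C(S_r)_A$. If $s<r$, then congruence brings $A$ to $I_s$ with $s<r<n$, the Lemma treating the case $A=I_s$ gives $C_{I_s}=C(S_r)_{I_s}$, and transporting this equality back along the inverse action yields $C_A=C(S_r)_A$. Every $A\in S_r$ lies in one of these two cases---the value $s=0$, that is $A=0$, being covered by the second---so the fiberwise equality holds for all $A$, and in particular $C=C(S_r)$.

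The step I expect to be the main obstacle is precisely the claim that the action of Remark \ref{group action} preserves $C(S_r)$, and not merely $C$: one must check that congruence induces exactly the inverse-transpose action on conormal covectors, which is where the symmetry of $A$ and $B$ together with the trace pairing enter. Once this equivariance of the conormal construction is secured, the reduction to the two normal-form cases is routine.
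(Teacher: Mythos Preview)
Your proposal is correct and follows essentially the same strategy as the paper's proof: obtain one inclusion from Corollary~\ref{open case}, then use the $GL(n,\C)$-action of Remark~\ref{group action} to reduce an arbitrary $A$ to the normal form $I_s$, invoke the lemma for $I_s$ (or the smooth-locus lemma when $s=r$), and transport back. The only minor difference is that the paper, instead of arguing abstractly that the action preserves $C(S_r)$, transports the explicit curve produced in the $I_s$ lemma and notes that since the action preserves rank the generic points of the moved curve still lie over $S_r-S_{r-1}$, hence in $C(S_r)$ by closure---but this is a variant of the same equivariance you identify.
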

\begin{proof} By \ref{open case}, $C(S_r)_A\subset \{(A,B), A\in S_r, B\in {\mathbb{P}}S_{n-r}, AB=0\}$. So,  suppose $B \in {\mathbb{P}}S_{n-r}$ $AB=0$. Using the group action of \ref{group action} we can move $(A,B)$ to $(I_s,\tilde B)$ in $C$, $s$ the rank of $A$. Then there exists a curve by the previous lemma which lies in $C(S_r)$ passing through $(I_s, \tilde B)$. By the group action, we can move $(I_s, \tilde B)$ back to $(A,B)$ and the curve along with it. 
\end{proof}

We would like to describe the fiber $C(S_r)_A$ in terms of linear spaces associated with $A$.

 Given $M\in \Sh$, $C(M)$ denotes the vectors in ${{\mathbb C}^{n}}^*$ which annihilate the image of $M$, and $K^*(M)$ the quotient of ${{\mathbb C}^n}^*$ by those vectors which annihilate the kernel of $M$. Note that every element of $hom(K^*(M),C(M))$ has a well defined extension to 
 $hom({{\mathbb C}^n}^*,{{\mathbb C}^n}^*)$, by the inclusion of $C(M)$ in $\mathbb C^{n*})$, and the extension by $0$ over those vectors which annihilate the kernel of $M$, so we can view $hom(K^*(M),C(M))$ as a subspace of $hom({{\mathbb C}^n}^*,{{\mathbb C}^n}^*)$. Denote this embedding by $\Phi$. We define $hom_s(K^*(M),C(M))$ to be 
 $$\Phi^{-1}(\Phi(hom(K^*(M),C(M)))\cap \Sh).$$
 
Note that since $M$ is symmetric, then $v\in C(M)$ if and only if $v^*\in {mathbb C}^n$ is in $K(M)$. 
 Let $X_r(M)$ be elements of ${\mathbb P}(hom_s(K^*(M),C(M)))$ of rank less than or equal to $r$.
 
 \begin{theorem} \label{fiber} Suppose $M\in S_r$. Then 
  $C(S_r)_M\approx X_{n-r}(M)$. If $K(M)\cap M({\mathbb C}^n)=0$ then
 \begin{enumerate}\item $C(S_r)_M\simeq {\mathbb P}S_{n-r}\subset {\mathbb P}hom_s(C(M), C(M))$.
 \item $C(S_r)_M\simeq  {\mathbb P}S_{n-r}\subset {\mathbb P}hom_s(K(M), K(M))$.
 \item $C(S_r)_M\simeq  {\mathbb P}S_{n-r}\subset {\mathbb P}hom_s(K^*(M), K^*(M))$.
 \end{enumerate}

 \end{theorem}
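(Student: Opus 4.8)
The plan is to reduce the statement to the concrete model of the conormal fibre already obtained and then to recognize the condition $MB=0$ as an instance of the factorization that defines $\hom_s(K^*(M),C(M))$. By the preceding theorem $C(S_r)_M$ is the set of $B\in\mathbb{P}S_{n-r}$ with $MB=0$, so I would work throughout with a symmetric $B$ of rank $\le n-r$ satisfying $MB=0$ and projectivize only at the very end. The first observation is that, since $M$ and $B$ are both symmetric, $MB=0$ is equivalent to the two containments $B(\mathbb{C}^n)\subseteq K(M)$ and $M(\mathbb{C}^n)\subseteq K(B)$: the condition $MB=0$ gives $\Ima B\subseteq K(M)$ directly, and transposing gives $BM=0$, i.e. $B$ vanishes on $M(\mathbb{C}^n)$.

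These two containments say exactly that $B$ factors as $\mathbb{C}^n\twoheadrightarrow\mathbb{C}^n/M(\mathbb{C}^n)\xrightarrow{\bar B}K(M)\hookrightarrow\mathbb{C}^n$, and I would match this factorization with $\Phi$. Because $M$ is symmetric, the standard pairing identifies $C(M)=\mathrm{Ann}(M(\mathbb{C}^n))$ with $K(M)$ (this is the note preceding the statement) and $K^*(M)=(\mathbb{C}^n)^*/\mathrm{Ann}(K(M))$ with $\mathbb{C}^n/M(\mathbb{C}^n)$; under these identifications $\Phi$ is precisely the operation of precomposing with the projection $\mathbb{C}^n\to\mathbb{C}^n/M(\mathbb{C}^n)$ and postcomposing with the inclusion $K(M)\hookrightarrow\mathbb{C}^n$. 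Hence $\Phi(\bar B)=B$, so $B\in\Sh$ if and only if $\bar B\in\hom_s(K^*(M),C(M))$, and conversely every $\psi\in\hom_s(K^*(M),C(M))$ produces a symmetric $\Phi(\psi)$ that automatically satisfies $M\Phi(\psi)=0$. Since $\Phi$ is injective and rank-preserving, the assignment $B\mapsto\bar B$ is a rank-preserving linear bijection between $\{B\in\Sh:MB=0\}$ and $\hom_s(K^*(M),C(M))$, and projectivizing the rank-$\le(n-r)$ loci yields $C(S_r)_M\approx X_{n-r}(M)$. The delicate point is exactly this bookkeeping: keeping the dual-space identifications consistent and checking that the symmetry of $B$ translates into the defining condition of $\hom_s$ rather than into its transpose. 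This is where the symmetry of $M$, which makes $M(\mathbb{C}^n)$ and $K(M)$ mutually annihilating under the standard form, is essential, and I expect it to be the main obstacle to a fully clean argument.

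For the three refinements I would impose $K(M)\cap M(\mathbb{C}^n)=0$. Since $M$ is symmetric, $M(\mathbb{C}^n)=K(M)^{\perp}$ for the standard symmetric form, so this hypothesis is precisely what upgrades orthogonality to a genuine direct sum $\mathbb{C}^n=K(M)\oplus M(\mathbb{C}^n)$ on which the form is nondegenerate on each summand (over $\mathbb{C}$, orthogonality by itself does not give this). In the block decomposition adapted to this splitting, the containments $\Ima B\subseteq K(M)$ and $B|_{M(\mathbb{C}^n)}=0$ force $B=\mathrm{diag}(B_{11},0)$ with $B_{11}$ a symmetric endomorphism of $K(M)$; thus $B\mapsto B_{11}$ identifies $C(S_r)_M$ with $\mathbb{P}S_{n-r}\subset\mathbb{P}\hom_s(K(M),K(M))$, which is case (2). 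The splitting also turns the projection $\mathbb{C}^n/M(\mathbb{C}^n)\to K(M)$ and the annihilator identifications into isomorphisms, giving canonical isomorphisms $K^*(M)\simeq K(M)$ and $C(M)\simeq K(M)$; substituting these into case (2) yields case (3) and case (1) respectively. The only remaining check is that each of these identifications carries symmetric elements to symmetric elements and preserves rank, which follows from the compatibility with $\Phi$ established in the first part.
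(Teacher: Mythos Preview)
Your proposal is correct and follows essentially the same approach as the paper: both use the preceding theorem's description of $C(S_r)_M$ as $\{B\in\mathbb{P}S_{n-r}:MB=0\}$, then show that $MB=0$ forces $B$ to factor through $\hom_s(K^*(M),C(M))$ via $\Phi$, and finally use the hypothesis $K(M)\cap M(\mathbb{C}^n)=0$ to obtain the isomorphisms among $K(M)$, $K^*(M)$, and $C(M)$. The only difference is cosmetic: for the enumerated items you argue via the direct-sum splitting $\mathbb{C}^n=K(M)\oplus M(\mathbb{C}^n)$ and block form, whereas the paper argues directly that the canonical map $K(M)\to K^*(M)$ is injective (hence an isomorphism by dimension count); these are the same linear-algebra fact expressed two ways.
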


\begin{proof} Suppose $B\in hom_s(K^*(M),C(M))$. Since $\Phi(B)$ is  symmetric, the element of $\Sh$ induced by duality has the same matrix as  $\Phi(B)$, so we can consider $M(\Phi(B))$ and this is the zero element as the image of $\Phi(B)$ lies in $C(M)$. Since $B\in X_{n-r}(M)$, the rank of $\Phi(B)\le n-r$, so $Phi(B)\in C(S_r)_M$.

Suppose $B\in\Sh$ and $MB=0$. Then the row space and the column space of $B$ lie in $C(M)$. Further, 
$$ann(K(M)=im M^*\subset K(B).$$ 
So, since $B$ is $0$ on $ann(K(M))$, $B=\Phi(\tilde B)$ for some $\tilde B$. Since $\Phi$ is an embedding, this finishes the first part of the proof.

Now suppose $K(M)\cap M({\mathbb C}^n)=0$. Suppose $v\in K(M)$, $v\ne 0$. Consider $v^*\in {{\mathbb C}^n}^*$. We claim $v^*\notin ann(K(M))$. If it were, then $v^*\in im(M^*)$ which implies $v\in im(M)$. This implies that the canonical map $\phi: K(M)\to K^*(M)$ is an embedding. Since these spaces have the same dimension and $\phi$ is linear, $\phi$ is an isomorphism. Since $C(M)\simeq K(M)$ by the symmetry of $M$ and duality the result follows.\end{proof}

\section{Multiplicity of the Polar Varieties of $N(\X)$}\label{MPV}

In this section we describe the polar variety of a symmetric determinantal variety and show a formula to calculate the multiplicity of a polar curve of $N$ in a deformation to a stabilization. To be more precise, we calculate the multiplicity of the polar of $N(\X)$ when $\X$ is the symmetric determinantal variety given by $\X=\widetilde{F}^{-1}(S_{n-1})$ or $\X=\widetilde{F}^{-1}(S_{n-2})$, which is reduced to an intersection number. At the end, for the case where $X=F^{-1}(S_{n-2})$, as in \cite{Gaff1}, we give an explicit way to calculate these intersection numbers as an alternate sum of colength of ideals.

The \textit{polar variety of codimension $l$} of $S_r$, denoted by $\Gamma_l(S_r)$, at the origin, is the germ given by intersecting $C(S_r)\cap (S_r\times H_{h-1-d})$ with $l$ hyperplanes, then projecting to $S_r$. Altogether, we need to intersect $C(S_r)$ with $h-1-d+l$ hyperplanes and then project to $S_r$ as below
$$
\left.\begin{array}{c}
\pi:C(S_r)\cap (S_r\times H_{h-1-d+l})\\
\downarrow\\
S_r
\end{array}\right.
$$ 

Sometimes it is better to work with the dimension of the polar variety instead of the codimension. In these cases, if we want the polar variety $\Gamma^l(S_r)$ of dimension $l$ we are going to intersect $C(S_r)$ with $h-1-l$ hyperplanes and then project to $S_r$. That means
$$
\Gamma^l(S_r)=\pi\left(C(S_r)\cap (S_r\times H_{h-1-l}\right).
$$

\begin{proposition}\label{empty}
The polar variety $\Gamma^l(S_r)$ of dimension $l$ is empty for all
$$
l\leq\frac{r(r+1)}{2}-1.
$$
\end{proposition}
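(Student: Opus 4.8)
The plan is to reduce the emptiness of $\Gamma^l(S_r)$ to a dimension count on the image of $C(S_r)$ under the second projection $p_2\colon C(S_r)\to\P^{h-1}$ onto the space of conormal directions, where $h=\frac{n(n+1)}{2}=\dim\Sh$. Writing $H:=H_{h-1-l}$ for a generic linear subspace of codimension $h-1-l$ in $\P^{h-1}$, one has
$$
C(S_r)\cap(S_r\times H)=p_2^{-1}(H),
$$
so $\Gamma^l(S_r)=\pi(p_2^{-1}(H))$ is empty precisely when $p_2^{-1}(H)=\emptyset$, that is, when $H$ misses the image $Z:=p_2(C(S_r))$. Thus everything comes down to identifying $Z$ and its dimension, and then invoking the standard fact that a generic linear subspace of codimension $c$ in $\P^{h-1}$ meets a projective variety $Z$ if and only if $\dim Z\ge c$.

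First I would pin down $Z$ by a double containment. By the description of the conormal fiber obtained above (the theorem giving $C(S_r)_A=\{(A,B)\mid AB=0,\ B\in\P S_{n-r}\}$), every conormal direction $B$ occurring in any fiber is a symmetric matrix of rank at most $n-r$, so $Z\subseteq X_{n-r}=\P S_{n-r}$. Conversely, the fiber over $A=0$ equals $X_{n-r}(0)$, and there $K(0)=C(0)=\C^n$, whence $X_{n-r}(0)=X_{n-r}$; this already exhibits all of $X_{n-r}$ inside $Z$. Therefore $Z=X_{n-r}$ exactly. Its dimension follows from Proposition \ref{prop}: since $\codim S_{n-r}=\frac{(n-(n-r))(n-(n-r)+1)}{2}=\frac{r(r+1)}{2}$, and $S_{n-r}$ is a cone, we get
$$
\dim Z=\dim X_{n-r}=\dim S_{n-r}-1=h-\frac{r(r+1)}{2}-1.
$$

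With $c=h-1-l$ the generic section meets $Z$ if and only if $\dim Z\ge c$, i.e.
$$
h-\frac{r(r+1)}{2}-1\ge h-1-l,
$$
which simplifies to $l\ge\frac{r(r+1)}{2}$. Equivalently, for $l\le\frac{r(r+1)}{2}-1$ the generic $H$ misses $Z$, so $p_2^{-1}(H)=\emptyset$ and $\Gamma^l(S_r)=\emptyset$, as claimed. The delicate point is the two set-theoretic inclusions that identify $Z=X_{n-r}$ (the rank bound on conormal directions for $Z\subseteq X_{n-r}$, and the full fiber over the origin for the reverse inclusion); the transversality input and the dimension arithmetic are then routine. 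I expect the identification of $Z$ to be the main thing to get right, since once $Z=X_{n-r}$ is established the threshold $\frac{r(r+1)}{2}$ drops out immediately from the codimension formula.
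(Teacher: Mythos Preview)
Your argument is correct and is essentially the same as the paper's, with only a cosmetic difference in viewpoint. You project $C(S_r)$ onto the conormal factor and identify the image as $X_{n-r}$; the paper instead looks at the fiber $C(S_r)_0$ over the origin in $S_r$ and identifies it as $X_{n-r}$ via Theorem~\ref{fiber}. Since the fiber over $0$ already contains every conormal direction (any $B\in S_{n-r}$ satisfies $0\cdot B=0$), these two sets coincide, and from that point on both proofs perform the identical dimension count $\dim X_{n-r}=h-1-\frac{r(r+1)}{2}$ against the codimension $h-1-l$ of the generic linear section. Your version has the slight advantage of giving global emptiness of $p_2^{-1}(H)$ rather than just emptiness of the germ at $0$, but since $\Gamma^l(S_r)$ is taken as a germ this makes no practical difference.
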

\begin{proof}
Let $c$ denote the codimension of $S_{n-r}$ in $\Sh$. By taking $M=0$ in the theorem \ref{fiber} we have
$$
C(S_r)_0=X_{n-r}(0)\simeq X_{n-r}.
$$
Thus,
$$
\dim C(S_r)_0=\dim X_{n-r}=\dim S_{n-r}-1=(h-c)-1=(h-1)-c.
$$
Since $C(S_r)_0\subset\mathbb{P}(\Sh)$, $\codim C(S_r)_0=c$. Let us consider the map  
$$
\pi:C(S_r)\cap (S_r\times H_{j})\longrightarrow S_r.
$$
Using $j$ hyperplanes we have the polar variety of dimension $h-1-j$. Now, the dimension of the fiber of $\pi$ over $0$ is given as follows:
$$
\dim \pi^{-1}(0)=\dim p^{-1}(0)-j=\dim \left(C(S_r)_0\right)-j=(h-c)-j-1.
$$
If $j\geq h-c$, then $\pi^{-1}(0)=\emptyset$, which means that the polar variety $\Gamma^{h-1-j}(S_r)$ is empty. Therefore, all the polar varieties of dimension less than equal to
$$
(h-1)-(h-c)=c-1=\frac{(n-(n-r))(n-(n-r)+1)}{2}-1=\frac{r(r+1)}{2}-1
$$
are empty.
\end{proof}

Our focus now is to calculate the multiplicity of the polar of $N(\X)$, where $\X=\widetilde{F}^{-1}(S_{n-1})$. For that we need to describe the $\Projan\mathcal{R}(N(X))$. The description we give will apply equally to $\Projan\mathcal{R}(N(\X))$.

Consider the map
$$
\begin{array}{lclc}
F: & \mathbb{C}^q & \longrightarrow & \Sh\\
					& x & \longmapsto & \left(f_{ij}(x)\right)
\end{array}
$$
whose entries are complex analytic functions with $f_{ij}(x)=f_{ji}(x)$ and $X$ has expected codimension. In this case, $X$ is a hypersurface of dimension $q-1$. As we have seen, the normal module $N(X)$ is an ideal generated by $\frac{n(n+1)}{2}$ polynomials, and, for each $x\in X$, its row space is generated by one vector that we call $v_x$. The projective analytic spectrum of the Rees Algebra is given by, 
$$
\Projan\mathcal{R}(N(X))=\overline{\left\lbrace(x,l) | x\in X_{reg} \text{ and } l\in\mathbb{P}(\left\langle v_x\right\rangle)\right\rbrace}\subseteq X\times\mathbb{P}^\frac{n(n+1)}{2}.
$$
If $x$ is a smooth point of $X$, then $F(x)$ has rank $n-1$. By one of the equations of $\Projan\mathcal{R}(N(X))$ we have
$$
F(x)(l_{ij})=0
$$
which means that all the columns of $(l_{ij})$ (consequently all the rows) are in $\ker F(x)$. Since $\rank F(x)=n-1$, the dimension of $\ker F(x)$ is equal to 1; therefore, each column of $[T_{ij}]$ is a multiple of a fixed vector in $\ker F(x)$, which implies that $[T_{ij}]$ is a matrix of rank $1$. Now, define the set $X_F$, contained in $X\times\mathbb{P}^{n-1}\times\mathbb{P}^{n-1}$, by
$$
X_F=\overline{\left\lbrace(x,l_1,l_2) | x\in X_{reg} \text{ and } (l_1,l_2)\in\Delta\left(\mathbb{P}(\ker (F(x))\times\mathbb{P}(\ker (F(x))\right)\right\rbrace}.
$$ 
where $\Delta$ is the diagonal.

\begin{proposition}\label{simeq} $X_F\simeq \Projan\mathcal{R}(N(X))$ as sets.
\end{proposition}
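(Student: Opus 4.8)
The plan is to realize both $X_F$ and $\Projan\mathcal{R}(N(X))$ as the closure of one and the same graph over $X_{reg}$, transported by the second Veronese embedding, and then to invoke properness of that embedding to match the closures. First I would fix the local picture over a smooth point $x\in X_{reg}$: there $F(x)$ has rank $n-1$, so $\ker F(x)=\langle u_x\rangle$ is a line, and by the earlier proposition (that the entries of $[T_{i,j}][F]$ lie in $I$, so $F(x)[T_{i,j}]=0$) together with the symmetry $T_{i,j}=T_{j,i}$, the matrix $[T_{i,j}]$ in the fibre of $\Projan\mathcal{R}(N(X))$ over $x$ is the \emph{unique} symmetric rank-one matrix $[u_xu_x^{t}]\in\mathbb{P}(\Sh)$. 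Meanwhile the fibre of $X_F$ over $x$ is the single diagonal point $([u_x],[u_x])$.

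Next I would reduce $X_F$ to a graph in $X\times\mathbb{P}^{n-1}$. Since the open stratum of $X_F$ lies in $X\times\Delta(\mathbb{P}^{n-1}\times\mathbb{P}^{n-1})$ and the diagonal $\Delta$ is closed, the whole closure $X_F$ stays inside $X\times\Delta$; projecting onto the first $\mathbb{P}^{n-1}$-factor therefore identifies $X_F$ homeomorphically with $\widetilde{X}_F:=\overline{\{(x,[u_x])\mid x\in X_{reg}\}}\subseteq X\times\mathbb{P}^{n-1}$, and it suffices to give an isomorphism $\widetilde{X}_F\simeq\Projan\mathcal{R}(N(X))$. For this I would use the second Veronese map $v\colon\mathbb{P}^{n-1}\to\mathbb{P}(\Sh)$, $[u]\mapsto[uu^{t}]$, whose image is precisely the projectivised symmetric matrices of rank one and which is a closed immersion (it is injective, since $uu^{t}=\lambda vv^{t}$ forces $u\parallel v$). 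Putting $\Psi=\mathrm{id}_X\times v$, the first step shows that $\Psi$ carries the open graph $\{(x,[u_x])\}$ bijectively onto the open graph $\{(x,[u_xu_x^{t}])\}$ cutting out $\Projan\mathcal{R}(N(X))$ over $X_{reg}$. Because $v$ is proper and a closed immersion, $\Psi$ is a homeomorphism onto its image and commutes with taking closures, so it sends $\widetilde{X}_F=\overline{(\text{open graph})}$ bijectively onto $\overline{(\text{open graph})}=\Projan\mathcal{R}(N(X))$; chaining the two identifications gives $X_F\simeq\Projan\mathcal{R}(N(X))$ as sets.

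The step I expect to be the main obstacle is the behaviour over the singular locus $\Sing X$, where $\ker F(x)$ jumps in dimension and the fibres of both spaces are determined only as limits. There one must check that the limits of the rank-one matrices $u_xu_x^{t}$ match the limits of the lines $[u_x]$ and that neither closure acquires a spurious component. This is exactly what properness and injectivity of the Veronese $v$ deliver: as a closed immersion it is a bijection onto its image and takes limits to limits injectively, so the two closures correspond point for point and the set-level isomorphism is forced without any further boundary computation.
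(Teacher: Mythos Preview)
Your argument is correct and follows essentially the same strategy as the paper: identify the fibres over $X_{reg}$ via a quadratic embedding, then pass to closures. The differences are in packaging rather than content. You first observe that $X_F$ is trapped in the diagonal $X\times\Delta$, project to $X\times\mathbb{P}^{n-1}$, and then use the second Veronese $v\colon[u]\mapsto[uu^t]$ as a closed immersion to transport closures. The paper instead keeps both $\mathbb{P}^{n-1}$ factors and uses the full Segre embedding $\varphi\colon\mathbb{P}^{n-1}\times\mathbb{P}^{n-1}\to\mathbb{P}^{n^2-1}$, checking fibrewise over smooth points (via a dimension-and-irreducibility count) that $(\Projan\mathcal{R}(N(X)))_x=\varphi((X_F)_x)$, and then concluding because both sides are closures of their smooth parts. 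Your appeal to properness of a closed immersion makes the closure step cleaner and more explicit than the paper's fibrewise dimension argument. On the other hand, the paper's choice of the Segre map over the Veronese is not accidental: in the codimension-three case treated immediately afterwards (Theorem~\ref{cover}), $X_F$ no longer sits in the diagonal, and the relevant map $\mathbb{P}^{n-1}\times\mathbb{P}^{n-1}\to\mathbb{P}(\Sh)$ becomes the composite $\Phi\circ\sigma$, a genuine $2$-to-$1$ cover rather than an embedding. So the Segre framework is what carries forward; your Veronese shortcut is specific to the hypersurface case.
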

\begin{proof}
Both sets are defined by the closures of points over the smooth set of $X$. So, let us work on this set. Consider the Segre embedding 
$$
\begin{array}{lclc}
\varphi: & \mathbb{P}^{n-1}\times\mathbb{P}^{n-1} & \longrightarrow & \mathbb{P}^{n^2-1}\\
			&		((S_1,\ldots,S_n),(T_1,\ldots,T_n)) & \longmapsto & \left(\begin{array}{ccc}
												S_1T_1 & \cdots & S_1T_n\\
												\vdots & \ddots & \vdots\\												
												S_nT_1& \cdots & S_nT_n
												\end{array}\right)
\end{array}
$$

We need to prove that $\left(\Projan\mathcal{R}(N(X))\right)_x\subseteq\varphi\left((X_F)_x\right)$, where $x$ is a smooth point of $X$. For this, let $l=\left(l_{ij}\right)$, with $l_{ij}=l_{ji}$, be a point in $\left(\Projan\mathcal{R}(N(X))\right)_x$. By the properties of $\Projan\mathcal{R}(N(X))$, for all $i$ we have $(l_{i1},l_{i2},\ldots,l_{in})\in\ker F(x)$.  Since $\varphi\left(\P^{n-1}\times\P^{n-1}\right)$ is the set of all matrices in $\P^{n^2-1}$ whose rank is equal to one, $l\in\varphi\left(\P^{n-1}\times\P^{n-1}\right)$ and, therefore, there exists $(s,t)\in\P^{n-1}\times\P^{n-1}$ such that $\varphi(s,t)=l$.

Now, we need to show that $(s,t)\in\Delta\left(\P(\ker (F(x))\times\P(\ker (F(x))\right)$. The Segre embedding gives us
$$
\begin{array}{c}
\left(\begin{matrix}
		l_{11}&l_{12}&\ldots&l_{1n}\\
		\vdots&\vdots&\ddots&\vdots\\
		l_{1n}&l_{2n}&\ldots&l_{nn}
\end{matrix}\right)=\left(\begin{matrix}
		s_1t_1&\ldots&s_1t_n\\
		\vdots&\ddots &\vdots\\
		s_nt_1&\ldots&s_nt_n
\end{matrix}\right)\\
\Downarrow\\
s_i(t_1,\ldots,t_n)=t_i(s_1,\ldots.s_n)=(l_{i1},\ldots,l_{in}).
\end{array}
$$
This implies that $s=t$ in $\P(\ker F(x))$ and, therefore, $l\in\varphi((X_F)_x)$.

As we have seen, $\dim\ker F(x)=1$, and dimension of $(\Projan\mathcal{R}(N(X)))_x$ is one less than the rank of the Jacobian module of $X$, which is the expected codimension of $X$. Thus, $\dim (X_F)_x=\dim (\Projan\mathcal{R}(N(X)))_x=0$. Since $\varphi\left((X_F)_x\right)$ is irreducible, $(\Projan\mathcal{R}(N(X)))_x$ is closed and both sets have the same dimension, $\left(\Projan\mathcal{R}(N(X))\right)_x = \varphi\left((X_F)_x\right)$. 
\end{proof}

We will use this result to compute the degree over the base $\mathbb{C}$ of the polar variety of dimension $1$ of $N(\mathcal{X})$, that is $\deg_{\C}\Gamma_{q-1}(N(\X))$, where $\mathcal{X}$ is the total space of the deformation, and a generic fiber is smooth. Consider the projection map 
$$
p_1:\X\times\mathbb{P}^{n-1}\times\mathbb{P}^{n-1}\longrightarrow\mathbb{P}^{n-1}
$$
and let $h_1$ be the pullback of a hyperplane class of $\mathbb{P}^{n-1}$ via the projection map, and $h$ be a hyperplane class of the diagonal. Denote the fiber over the origin in $\X$ of $\Projan\R(N(\X))$ by $E$, and consider $q-1$ the dimension of $X$. The degree of $\Gamma_{q-1}(N(\X))$ over $\C$ is calculated as the next theorem shows.

\begin{theorem}\label{HC} Suppose $\X$ is a stabilization of $X$, with base $\C$. The degree of the polar curve $\Gamma_{q-1}(N(\X))$ over $\C$ at the origin is
$$
\deg_{\C}\Gamma_{q-1}(N(\X))=(2h_1)^{q-1}\cdot\varphi^*E.
$$
\end{theorem}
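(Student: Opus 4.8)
The goal is to compute $\deg_\C \Gamma_{q-1}(N(\X))$, the degree over the base of the $1$-dimensional polar curve, in terms of intersection numbers on the space $\X \times \P^{n-1} \times \P^{n-1}$. The overarching strategy is to transport the computation from $\Projan \R(N(\X))$, where the relevant class is a power of the hyperplane class $h$ of the ambient $\P^{\frac{n(n+1)}{2}}$, onto the model $\X_F$ via the isomorphism of Proposition~\ref{simeq}, where the Segre embedding $\varphi$ lets us re-express everything through the two copies of $\P^{n-1}$. The appearance of the factor $2$ (yielding $2h_1$ rather than $h_1$) is the crucial phenomenon: it must come from the symmetry constraint $l_{ij}=l_{ji}$, equivalently from the fact that $\varphi^* h = h_1 + h_2$ and on the diagonal $h_1 = h_2 = h$, so that $\varphi^* h$ restricts to $2h$.

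\emph{Step one: interpret the polar degree as an intersection number on $\Projan \R(N(\X))$.} By definition the dimension-$(q-1)$ polar is obtained by slicing $C(S_r)\cap(\X\times H_{\bullet})$ — here realized as $\Projan\R(N(\X))$ — with the appropriate number of generic hyperplanes from the $\P^{\frac{n(n+1)}{2}}$ factor and projecting to $\X$. Taking the degree over $\C$ of the resulting curve amounts to intersecting with one further generic hyperplane in $\X$, i.e.\ against the exceptional fiber $E$ over the origin. Thus $\deg_\C\Gamma_{q-1}(N(\X))$ equals $h^{q-1}\cdot E$ computed on $\Projan\R(N(\X))$, where $h$ is the hyperplane class of the ambient projective space of the Segre image and $q-1$ is the fibre dimension of the relevant polar slice. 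I would make this identification carefully using the dimension count already established in the proof of Proposition~\ref{simeq} (the fibre over a smooth point is $0$-dimensional, so the generic section dimensions match).

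\emph{Step two: pull back along the Segre embedding.} Using Proposition~\ref{simeq}, which gives $X_F \simeq \Projan\R(N(\X))$ with the isomorphism realized by $\varphi$, I would rewrite $h^{q-1}\cdot E$ as $(\varphi^* h)^{q-1}\cdot \varphi^* E$ on $X_F\subset \X\times\P^{n-1}\times\P^{n-1}$. Since the Segre map satisfies $\varphi^* h = h_1 + h_2$ where $h_1,h_2$ are the pullbacks of the hyperplane classes from the two $\P^{n-1}$ factors, and since $X_F$ is supported on the diagonal $\Delta$ where the two factors are identified, the classes $h_1$ and $h_2$ agree when restricted to $X_F$, both equal to the diagonal hyperplane class $h$. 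Hence $\varphi^* h = 2h_1$ on $X_F$, and substituting gives $(2h_1)^{q-1}\cdot\varphi^* E$, as claimed.

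\emph{Main obstacle.} The delicate point is the bookkeeping at Step one and the transition to Step two: justifying that the degree over $\C$ of the polar curve really is captured by $h^{q-1}\cdot E$ with exactly the right exponent, and that restricting $\varphi^*h$ to the diagonal-supported $X_F$ legitimately replaces $h_1+h_2$ by $2h_1$ at the level of intersection numbers (rather than merely at the level of classes in the ambient cohomology). This requires knowing that $X_F$ lies scheme-theoretically over the diagonal — which is exactly what the defining closure of $X_F$ supplies — so that the two hyperplane classes restrict to equal classes on $X_F$. I would verify the genericity of the hyperplanes used so that all intersections are proper and the excess-intersection contributions vanish, making the class-level identity $(\varphi^*h)^{q-1}\cdot\varphi^*E = (2h_1)^{q-1}\cdot\varphi^*E$ into an equality of actual intersection numbers.
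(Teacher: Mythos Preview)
Your proposal is correct and follows essentially the same route as the paper: identify $\deg_\C\Gamma_{q-1}(N(\X))$ with $h^{q-1}\cdot E$ on $\Projan\R(N(\X))$ via conservation of number, then pull back through the Segre isomorphism of Proposition~\ref{simeq} and use that $X_F$ sits over the diagonal so $\varphi^*h=h_1+h_2=2h_1$. One small wording issue: in Step one you write ``intersecting with one further generic hyperplane in $\X$,'' but what is actually meant (and what the paper invokes) is specializing the parameter in $\C$ to the origin and using conservation of number to land on the fiber $E$; this is not a hyperplane section in $\X$.
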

\begin{proof} By definition, the degree of $\Gamma_{q-1}(N(\X))$ over $\C$ at the origin is the degree of the projection to $\C$ at the origin of $\Gamma_{q-1}(N(\X))$. The generic rank of $N(\X)$ is one, since it is an ideal, and $\Projan\R(N(\X))$ has dimension $q$, with generic fiber dimension $0$. The fiber of the exceptional divisor $E$ has dimension one less than $\Projan\R(N(\X))$, that is, $q-1$. The polar curve $\Gamma_{q-1}(N(\X))$ is given by intersecting $\Projan\R(N(\X))$ with ${q-1}$ generic hyperplanes of $\P^\frac{n(n+1)}{2}$ and projecting to $\X$ by $p$. The degree is calculated as follows: the intersection $\Projan\R(N(\X))\cap H^{q-1}$ is a curve in its ambient space, so the degree of the projection of $\Projan\R(N(\X))\cap H^{q-1}$ over $\C$ is well-defined. By conservation of number, the degree of the polar variety  $\Gamma_{q-1}N(\X)$ is the same as the degree
of $E$ as a projective scheme embedded in $\P^{\frac{n(n+1)}{2}}$. Thus, $\deg_{\C}\Gamma_{q-1}(N(\X))=|E\cap h^{q-1}|$. Now, consider the diagram 
$$
\xymatrix{
\X_F\cap\varphi^{-1}(h^{q-1}) \ar[r] \ar[dr] & \Projan\R(N(\X))\cap h^{q-1} \ar[d]\\
 & \C }
$$ 
By proposition \ref{simeq}, the map from $\X_F\cap\varphi^{-1}(h^{q-1})$ to $\Projan\R(N(\X))\cap h^{q-1}$ is an isomorphism. Moreover, $|E\cap h^{q-1}|$ has the same numbers of intersection points as $|\varphi^{-1}(E)\cap\varphi^{-1}(h^{q-1})|$. Thus,
$$
\begin{array}{ccl}
|\varphi^{-1}(E)\cap\varphi^{-1}(h^{q-1})| & = &\varphi^*(E)\cdot\varphi^*(h^{q-1})\\
 & = & \varphi^*(E)\cdot(2h_1)^{q-1}.
\end{array}
$$
Therefore, 
$$
\deg_{\C}\Gamma_{q-1}(N(\X))=(2h_1)^{q-1}\cdot\varphi^*E.
$$
\end{proof}

Now, we are interested in knowing what happens if $X$ has codimension $3$. For this case, let $X_F$ be defined as follows:
$$
X_F=\overline{\left\lbrace(x,l_1,l_2) | x\in X_{reg} \text{ and } (l_1,l_2)\in\mathbb{P}(\ker (F(x))\right\rbrace}\subseteq X\times\mathbb{P}^{n-1}\times\mathbb{P}^{n-1}.
$$

Take $(x,l)\in\Projan\mathcal{R}(N(X))$, where $l=(l_{ij})$, with $l_{ij}=l_{ij}$. Since $\rank F(x)=n-2$, we have $\dim \ker F(x)=2$, which means that the rank of $l$ has to be $1$ or $2$. For this case, we will not have an isomorphism as before. However, we are still able to exhibit a specific map that will be useful to calculate the degree of $\Gamma (N(\mathcal{X}))$ over $\mathbb{C}$, as we will show in the next results. For that, consider the group action given by:
$$
\begin{array}{lclc}
G: & Gl(n)\times \Sh & \longrightarrow & \Sh\\
 & (A,M) & \longmapsto & A^tMA
\end{array}
$$ 

\begin{lemma}\label{rank12} Let $G$ be the group action defined above, and consider the map
$$
\begin{array}{lclc}
\Phi: & \Sigma_1 & \longrightarrow & S_2\\
		& A & \longmapsto & A + A^t
\end{array}
$$
Then, $\Phi$ is equivariant over the action of $G$, that is, 
$$
\Phi(M\cdot A)=M\cdot\Phi(A)
$$
for all $M\in Gl(n)$ and $A\in\Sigma_1$.
\end{lemma}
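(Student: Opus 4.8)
The plan is to reduce the assertion to a single transpose identity, after first confirming that $\Phi$ genuinely lands in $S_2$ and that both sides of the claimed equation refer to the same action. First I would check that $\Phi$ is well defined: for $A \in \Sigma_1$ we have $\rank A \le 1$, hence $\rank A^t \le 1$ and $\rank(A + A^t) \le 2$, while $(A + A^t)^t = A^t + A = A + A^t$ shows $\Phi(A)$ is symmetric; thus $\Phi(A) \in S_2$. I would also stress that the action of $G$ on $\Sigma_1$ and the (restricted) action of $G$ on $S_2$ are given by the identical formula $M \cdot B = M^t B M$, so that the equivariance claim is a true equality of matrices and not a comparison of two formally distinct operations.

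The core of the argument is then a direct computation. Expanding the left-hand side,
$$
\Phi(M \cdot A) = \Phi(M^t A M) = M^t A M + (M^t A M)^t .
$$
The only point to notice is how the transpose acts on the triple product: since $(M^t)^t = M$, one has $(M^t A M)^t = M^t A^t M$. Substituting this and factoring out $M^t$ on the left and $M$ on the right gives
$$
\Phi(M \cdot A) = M^t A M + M^t A^t M = M^t (A + A^t) M = M^t \Phi(A) M = M \cdot \Phi(A),
$$
which is precisely the desired equivariance.

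I do not expect a genuine obstacle here: the content of the lemma is simply that the symmetrization $A \mapsto A + A^t$ commutes with the congruence $B \mapsto M^t B M$, a compatibility that is forced by $G$ acting by congruence rather than by ordinary conjugation, and the only thing demanding care is the bookkeeping of transposes in the triple product. For later use it is worth recording that $\Phi$ is not injective: $\Phi(A) = \Phi(B)$ holds exactly when $A - B$ is skew-symmetric, so, for instance, the rank-one matrix with its single nonzero entry in the $(1,2)$ position and the rank-one matrix with its single nonzero entry in the $(2,1)$ position share the same image. This non-injectivity is consistent with the fact that in the codimension-three setting one no longer expects the clean bijection obtained in Proposition \ref{simeq}.
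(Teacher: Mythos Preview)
Your proof is correct and follows essentially the same approach as the paper: a direct computation expanding $\Phi(M^t A M)$, applying the transpose identity $(M^t A M)^t = M^t A^t M$, and factoring to obtain $M^t(A+A^t)M = M\cdot\Phi(A)$. Your additional checks that $\Phi$ lands in $S_2$ and your closing remark on non-injectivity are extras not in the paper's proof, but they are correct and do no harm.
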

\begin{proof} Let $M$ be an element in $Gl(n)$ and $A\in\Sigma_1$. Thus,
$$
\begin{array}{rcl}
\Phi(M\cdot A)&=&\Phi(M^tAM)\\
            &=&M^tAM + (M^tAM)^t\\
            &=&M^tAM + M^tA^tM\\
            &=&M^t(AM + A^tM)\\
            &=&M^t(A + A^t)M\\
            &=&M^t\Phi(A)M\\
            &=&M\cdot\Phi(A).
\end{array}
$$
\end{proof}

\begin{corollary} $\Phi$ carries orbits of $G$ in $\Sigma_1$ to orbits in $S_2$.
\end{corollary}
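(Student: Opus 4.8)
The plan is to read the corollary off directly from the equivariance established in Lemma~\ref{rank12}; no new computation should be required. The guiding observation is the general principle that an equivariant map between two sets equipped with the same group action automatically carries orbits onto orbits, and the lemma has already supplied precisely the equivariance needed. So the whole argument reduces to unwinding what ``orbit'' means and substituting the lemma's identity.

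Concretely, I would fix $A\in\Sigma_1$ and write its $G$-orbit as $G\cdot A=\{M\cdot A : M\in Gl(n)\}$, where $M\cdot A=M^tAM$. Applying $\Phi$ termwise and invoking $\Phi(M\cdot A)=M\cdot\Phi(A)$ from Lemma~\ref{rank12}, one obtains
$$
\Phi(G\cdot A)=\{\Phi(M\cdot A):M\in Gl(n)\}=\{M\cdot\Phi(A):M\in Gl(n)\}=G\cdot\Phi(A).
$$
Since $\Phi(A)=A+A^t$ is symmetric of rank at most $2$, it lies in $S_2$, and the right-hand side is exactly the $G$-orbit of $\Phi(A)$ inside $S_2$. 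Hence the image under $\Phi$ of the orbit of $A$ is a single $G$-orbit in $S_2$, which is the assertion of the corollary.

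The only points deserving a word of justification are that the congruence action $M\cdot(-)=M^t(-)M$ genuinely preserves both $\Sigma_1$ and $S_2$ (it preserves rank and preserves symmetry, so this is immediate) and that the displayed set equality is a true equality rather than a mere inclusion; the latter holds because $M$ ranges over all of $Gl(n)$ on both sides, so the equivariance of Lemma~\ref{rank12} yields the identity in both directions. I expect no genuine obstacle here: the corollary is a formal consequence of the lemma, and the only substantive content — the equivariance itself — was already carried out in the proof of Lemma~\ref{rank12}.
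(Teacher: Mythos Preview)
Your proposal is correct and is precisely the argument the paper has in mind: the paper's proof reads in its entirety ``Follows directly from lemma~\ref{rank12},'' and what you have written is simply the one-line unpacking of that sentence. There is no additional content in the paper's version beyond the equivariance you invoke.
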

\begin{proof} Follows directly from lemma \ref{rank12}.
\end{proof}

\begin{proposition}\label{diffeo} $\Phi$ is a $2-1$ covering map, whose critical set is $S_1$, and 
$$
\Phi:\left(\Sigma_1\backslash S_1\right)\longrightarrow\left(S_2\backslash S_1\right)
$$
is a local diffeomorphism.
\end{proposition}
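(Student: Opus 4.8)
The plan is to read $\Phi$ through the identification of a symmetric matrix $B$ with its quadratic form $q_B(x)=x^{t}Bx$, under which $\Phi$ becomes the operation of presenting a quadratic form as a product of two linear forms. Writing a rank-$\le 1$ matrix as $A=uv^{t}$, one has $\Phi(A)=uv^{t}+vu^{t}$ and $q_{\Phi(A)}(x)=2(u^{t}x)(v^{t}x)=2\ell_u(x)\ell_v(x)$, where $\ell_w(x):=w^{t}x$. In particular $\Phi(A)$ is symmetric of rank $\le 2$, so $\Phi$ is well defined into $S_2$, and $\rank\Phi(A)=2$ exactly when $u,v$ are linearly independent, that is, exactly when $A\in\Sigma_1\setminus S_1$.

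First I would pin down the fibers. For $B\in S_2\setminus S_1$, diagonalizing $q_B$ by congruence gives $q_B=2\ell\ell'$ with $\ell,\ell'$ linearly independent, and this factorization is unique up to order and up to the rescaling $(\ell,\ell')\mapsto(\lambda\ell,\lambda^{-1}\ell')$, which is precisely the ambiguity $uv^{t}=(\lambda u)(\lambda^{-1}v)^{t}$ already present in $A$; hence $\Phi^{-1}(B)=\{A,A^{t}\}$ with $A\neq A^{t}$. Alternatively, since $\Phi$ is $G$-equivariant (Lemma \ref{rank12}) and $Gl(n)$ acts transitively on $\Sigma_1\setminus S_1$ and on $S_2\setminus S_1$ (all matrices of a given rank and symmetry type being congruent), it suffices to count the fiber over the normal form $e_1e_2^{t}+e_2e_1^{t}$, where one reads off the two preimages $e_1e_2^{t}$ and $e_2e_1^{t}$. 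Over $B=2\lambda uu^{t}\in S_1$ the same factorization argument forces $\ell_a\ell_b=\lambda\ell_u^{2}$, hence $A=\lambda uu^{t}$ is the unique preimage; thus $\Phi$ is generically two-to-one and collapses to one-to-one exactly along $S_1$.

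The deck structure comes from the transpose involution $\iota\colon A\mapsto A^{t}$, which preserves $\Sigma_1$, satisfies $\Phi\circ\iota=\Phi$, and whose fixed locus in $\Sigma_1$ is exactly the symmetric rank-$\le1$ matrices $S_1$. So $\iota$ is a free $\mathbb{Z}/2$-action on $\Sigma_1\setminus S_1$ whose orbits are precisely the fibers of $\Phi$, and $\Phi$ identifies $S_2\setminus S_1$ with the quotient $(\Sigma_1\setminus S_1)/\iota$. Once $\Phi$ is shown to be a local diffeomorphism there, this free finite action exhibits it as a two-sheeted covering. For the differential I would use that $\Phi$ is the restriction to $\Sigma_1$ of the linear map $A\mapsto A+A^{t}$: at a smooth point $A=uv^{t}$ its differential is this linear map restricted to $T_A\Sigma_1=\{wv^{t}+uz^{t}:w,z\in\mathbb{C}^{n}\}$, a space of dimension $2n-1=\dim S_2$ (Proposition \ref{prop}). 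For $A\in\Sigma_1\setminus S_1$ I would show injectivity: if $wv^{t}+uz^{t}+vw^{t}+zu^{t}=0$, passing to quadratic forms gives $\ell_w\ell_v=-\ell_u\ell_z$, and since $\ell_u,\ell_v$ are coprime one gets $w=cu$, $z=-cv$, whence the tangent vector $wv^{t}+uz^{t}=0$; equality of dimensions then makes $d\Phi_A$ an isomorphism. Conversely at $A=uu^{t}\in S_1\setminus\{0\}$ the image of $d\Phi_A$ is $\{yu^{t}+uy^{t}\}$, of dimension $n<2n-1$, so the rank drops; together with the cone vertex $0\in S_1$ (a singular point of $\Sigma_1$), this identifies the critical set as exactly $S_1$.

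The main obstacle is the fiber analysis: establishing that over each rank-$2$ symmetric matrix there are exactly two preimages, interchanged by transpose, and that they coalesce precisely along $S_1$. The uniqueness-of-factorization argument (or, equivalently, the equivariant reduction to a single normal form via Lemma \ref{rank12}) is what makes this rigorous; once the count and the differential computation are in hand, the covering and critical-set assertions follow formally from the free $\mathbb{Z}/2$-action.
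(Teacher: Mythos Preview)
Your proof is correct and takes a genuinely different route from the paper's. The paper uses the $Gl(n)$-equivariance of $\Phi$ to reduce both the fiber count and the differential computation to a single normal form: it fixes $B=\begin{psmallmatrix}0&1\\1&0\end{psmallmatrix}\oplus 0$, solves $A+A^{t}=B$ with $\rank A\le 1$ by direct entry-by-entry inspection (finding exactly the two preimages $e_1e_2^{t}$ and $e_2e_1^{t}$), then computes $T_B\Sigma_1$ and $T_{\Phi(B)}S_2$ explicitly as spaces of matrices with prescribed zero blocks and verifies surjectivity of $D_B\Phi$ by writing down a preimage for each target element.

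Your approach replaces these explicit matrix manipulations with the dictionary $B\leftrightarrow q_B$ and the observation that $\Phi(uv^{t})$ corresponds to the factored form $2\ell_u\ell_v$. The fiber count then becomes unique factorization of a rank-two quadratic form as a product of linear forms, and the injectivity of $d\Phi_A$ becomes the coprimality of $\ell_u,\ell_v$ in $\mathbb{C}[x_1,\dots,x_n]$. You also make explicit the deck involution $\iota\colon A\mapsto A^{t}$, which is implicit in the paper but never named; this cleanly explains why the branching locus is exactly the fixed set $S_1$ and why the restriction to the complement is an honest $\mathbb{Z}/2$-cover. What the paper's approach buys is that it is entirely self-contained linear algebra with no appeal to factorization in polynomial rings; what yours buys is a coordinate-free argument that avoids the somewhat lengthy block-matrix case analysis and makes the covering structure transparent. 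Both use the equivariance from Lemma~\ref{rank12}, but you use it only as an alternative route to the fiber count rather than as the backbone of the whole proof.
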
 
\begin{proof} Let us start by showing that $\Phi$ is a $2-1$ branched cover. First, it is clear that $\Phi$ is at least $2-1$ cover since for all matrices $A$ in $\Sigma_1$ we have $\Phi(A)=\Phi(A^t)$. Now, suppose $A\in S_2$ has rank $2$; then there are only two preimages for $A$. Indeed, by the group action, we can take $A$ as the matrix
$$
A=\left(\begin{array}{ccccc}
			0 & 1 & 0 & \cdots & 0\\
			1 & 0 & 0 & \cdots & 0\\	
			0 & 0 & 0 & \cdots & 0\\
			\vdots & \vdots & \vdots & \ddots & \vdots\\									
			0 & 0 & 0 & \cdots & 0
			\end{array}\right).
$$
If $B$ is a preimage of $A$, then $B+B^t=A$. Let us write $B$ as follows:
$$
B=\left(\begin{array}{c|c}
\begin{matrix}
b_{11}&b_{12}\\
b_{21}&b_{22}
\end{matrix} & C \\
\hline
D&B_1
\end{array}\right).
$$  
The first thing we can say is that $b_{ii}=0$ and $b_{ij}+b_{ji}=0$, except for $b_{21}+b_{12}=1$. Therefore, $B_1$ is a skew symmetric matrix of rank less than equal to $1$ whose diagonal elements are $0$. However, the only skew symmetric matrix of rank less than or equal to $1$ is the zero matrix, so we claim that $B_1$ is the such matrix. Indeed, if some $b_{ij}$ is not equal to $0$, then $b_{ji}\neq 0$, which means that there exists a $2\times 2$ minor with these elements on it that has a non-zero determinant. But this is impossible since $B$ has rank less than 2. Thus, the preimage of $A$ is contained in the set
$$
\left\lbrace B\in\Sigma_1\quad\vline\quad B=\left(\begin{array}{c|c}
\begin{matrix}
0&b_{12}\\
b_{21}&0
\end{matrix} & C \\
\hline
D&0
\end{array}\right)\right\rbrace.
$$  
Now, let us analyze $b_{12}$ and $b_{21}$. By the equation  $B+B^t=A$ and the fact that $B$ has rank less than $2$, we have
$$
\left\lbrace\begin{array}{ccc}
b_{12}b_{21}&=&0\\
b_{12}+b_{21}&=&1
\end{array}\right..
$$ 
This means we have only two options for the upper left corner of $B$, 
$$
\left[\begin{matrix}
0&0\\
1&0
\end{matrix}\right] \text{  or  } 
\left[\begin{matrix}
0&1\\
0&0
\end{matrix}\right].
$$  
The last step of this part is to analyze $C$ and $D$. If $b_{21}=1$, then the second column of $D$ and the first row of $C$ are zero; and by $B+B^t=A$, the second column of $D$ being $0$ imples that the second row of $C$ is also zero, and the first row of $C$ being $0$ implies that the first column of $D$ is zero. By the same analogy, with $b_{12}=1$, we also have $C$ and $D$ as zero matrices. Therefore, the only preimages of $A$ are
$$
\left(\begin{array}{ccccc}
			0 & 1 & 0 & \cdots & 0\\
			0 & 0 & 0 & \cdots & 0\\	
			0 & 0 & 0 & \cdots & 0\\
			\vdots & \vdots & \vdots & \ddots & \vdots\\									
			0 & 0 & 0 & \cdots & 0
			\end{array}\right) \text{  or  }
\left(\begin{array}{ccccc}
			0 & 0 & 0 & \cdots & 0\\
			1 & 0 & 0 & \cdots & 0\\	
			0 & 0 & 0 & \cdots & 0\\
			\vdots & \vdots & \vdots & \ddots & \vdots\\									
			0 & 0 & 0 & \cdots & 0
			\end{array}\right).
$$   
Note that the above argument shows that $\Phi:\left(\Sigma_1\backslash S_1\right)\longrightarrow\left(S_2\backslash S_1\right)
$ is surjective and since all symmetric matrices of rank $1$ have only one pre-image, $S_1$ is the singular set. Now, we need to show that $p\left(\Sigma_1\backslash S_1\right)$ is a local diffeomorphism. For this, let $B$ be the matrix of rank $1$ given by
$$
B=\left(\begin{array}{ccccc}
			0 & 0 & 0 & \cdots & 0\\
			1 & 0 & 0 & \cdots & 0\\	
			0 & 0 & 0 & \cdots & 0\\
			\vdots & \vdots & \vdots & \ddots & \vdots\\									
			0 & 0 & 0 & \cdots & 0
			\end{array}\right).
$$
By the group action, in order to prove the local diffeomorphism it is enough to show that $D_B\Phi$ has maximal rank. Since $\Phi$ is linear, we have
$$
\begin{array}{lclc}
D_B\Phi: & T_B\Sigma_1 & \longrightarrow & T_{\Phi(B)}S_2\\
		& A & \longmapsto & A + A^t
\end{array}
$$
Now, let us calculate these tangent spaces. As we know, the tangent space of a determinantal variety at a smooth point is equal to 
$$
T_B\Sigma_1=\{C\in \ho\mid C(\ker B)\subset \Ima B\}.
$$
In this case, $\ker B$ is generated by the vectors 
$$
\ker B=\left\langle (0,1,0,\ldots,0),(0,0,1,0,\ldots,0),\ldots,(0,\ldots,0,1)\right\rangle
$$
and $\Ima B=\{(0,t,0,\ldots,0)\mid t\in\C\}$. Thus, the matrix $C$ applied to the kernel vectors gives us
$$
C(0,\ldots,0,\underbrace{1}_{\textit{position } i},0,\ldots,0)=\left(\begin{matrix}
c_{1i}\\
c_{2i}\\
\vdots\\
c_{ni}
\end{matrix}\right)
$$
which means $c_{1i}=c_{3i}=\ldots=c_{ni}=0$ and $c_{2i}$ is any complex number, for all $i=2,\dots,n$. Therefore, if $C$ is a matrix in $T_B\Sigma_1$, then 
$$
C=\left(\begin{matrix}
			c_{11} & 0 & \cdots & 0\\
			c_{21} & c_{22} & \cdots & c_{2n}\\	
			c_{31} & 0 & \cdots & 0\\
			\vdots & \vdots & \ddots & \vdots\\									
			c_{n1} & 0 & \cdots & 0
			\end{matrix}\right).
$$
For the tangent space of $S_2$ at a smooth point we have that
$$
T_{\Phi(B)}S_2=\{D\in \Sh\mid D(\ker (\Phi(B)))\subset \Ima (\Phi(B))\}
$$
where $\Phi(B)=A$. In that case,
$$
\begin{array}{rcl}
\ker A&=&\left\langle (0,0,1,0,\ldots,0),(0,0,0,1,0,\ldots,0),\ldots,(0,\ldots,0,1)\right\rangle\\
\Ima A&=&\{(s,t,0,\ldots,0) \mid s,t\in\C\}.
\end{array}
$$
Thus, the matrix $D$ applied to the kernel vectors gives us
$$
D(0,\ldots,0,\underbrace{1}_{\textit{position } i},0,\ldots,0)=\left(\begin{matrix}
d_{1i}\\
d_{2i}\\
\vdots\\
d_{ni}
\end{matrix}\right).
$$
which means $d_{3i}=d_{4i}=\ldots=d_{ni}=0$ and $d_{1i},d_{2i}$ are any complex number, for all $i=3,\dots,n$. Therefore, if $D$ is a matrix in $T_AS_2$, then 
$$
D=\left(\begin{matrix}
			d_{11} & d_{12}& d_{13} & \cdots & d_{1n}\\
			d_{12} & d_{22} & d_{23} & \cdots & d_{2n}\\	
			d_{13} & d_{23} & 0 & \cdots & 0\\
			\vdots & \vdots & \vdots & \ddots & \vdots\\								
			d_{1n} & d_{2n} & 0 & \cdots & 0
			\end{matrix}\right).
$$
Finally, let us prove that $D_B\Phi$ is surjective. If $D$ is a symmetric matrix in $T_AS_2$, take $C$ as follows
 $$
C=\left(\begin{matrix}
			\frac{1}{2}d_{11} & 0 & \cdots & 0\\
			d_{12} & \frac{1}{2}d_{22} & \cdots & d_{2n}\\	
			d_{13} & 0 & \cdots & 0\\
			\vdots & \vdots & \ddots & \vdots\\									
			d_{1n} & 0 & \cdots & 0
			\end{matrix}\right).
$$
Thus,
$$
D_B\Phi(C+C^t)=D.			
$$
\end{proof}

\begin{corollary}\label{surjective} $\Phi:\Sigma_1\longrightarrow S_2$ is surjective.
\end{corollary}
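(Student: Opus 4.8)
The plan is to reduce the claim to the single case not already settled in the course of proving Proposition~\ref{diffeo}. Indeed, that proof establishes that the restriction
$$
\Phi:\left(\Sigma_1\backslash S_1\right)\longrightarrow\left(S_2\backslash S_1\right)
$$
is surjective, since every rank-$2$ symmetric matrix is shown there to have exactly two rank-$1$ preimages. Consequently every element of $S_2$ of rank exactly $2$ already lies in the image of $\Phi$, and it suffices to show that every matrix of $S_1$, that is, every symmetric matrix of rank at most $1$, is of the form $A+A^t$ with $A\in\Sigma_1$.

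For this I would argue directly using the factorization of symmetric rank-$1$ matrices over $\C$. If $M\in S_1$ is the zero matrix, then $M=\Phi(0)$. Otherwise $M$ is symmetric of rank exactly $1$, so its image is spanned by a single vector and symmetry forces $M=c\,uu^t$ for some nonzero $c\in\C$ and $u\in\C^n$; writing $c=d^2$ and $v=du$ gives $M=vv^t$. Setting $A=\tfrac12 vv^t$, which has rank $1$ and hence lies in $\Sigma_1$, one obtains
$$
\Phi(A)=A+A^t=\tfrac12 vv^t+\tfrac12 (vv^t)^t=vv^t=M,
$$
because $vv^t$ is symmetric. Thus $S_1\subseteq\Phi(\Sigma_1)$, and combined with the previous paragraph this yields surjectivity onto all of $S_2$.

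Alternatively, and more in keeping with the equivariant viewpoint, I could invoke Lemma~\ref{rank12}: since over $\C$ all symmetric matrices of rank $1$ form a single $G$-orbit and $\Phi$ carries $G$-orbits to $G$-orbits, it is enough to check one representative. Writing $I_1$ for the matrix with $1$ in the $(1,1)$ entry and $0$ elsewhere, one has $\Phi(\tfrac12 I_1)=I_1$ with $\tfrac12 I_1\in\Sigma_1$, so the whole orbit $S_1\backslash\{0\}$ lies in the image, while $0=\Phi(0)$. There is essentially no obstacle at this stage: the genuine work, namely the analysis of the rank-$2$ locus and the counting of preimages, was already carried out in Proposition~\ref{diffeo}, and the only point requiring care is the observation that over $\C$ a symmetric matrix of rank $1$ always factors as $vv^t$, which lets one split off the scalar $\tfrac12$ to produce a preimage lying in $\Sigma_1$.
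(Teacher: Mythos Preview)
Your proof is correct and follows essentially the same approach as the paper: reduce to Proposition~\ref{diffeo} for the rank-$2$ locus, then handle $S_1$ separately. The only difference is that the paper is more direct on $S_1$: it simply observes that for $A\in S_1$ one has $A^t=A$, so $\Phi|_{S_1}$ is the map $A\mapsto 2A$, which is visibly a bijection of $S_1$; your detour through the factorization $M=vv^t$ is unnecessary since $A=\tfrac12 M$ already works for any symmetric $M$.
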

\begin{proof} By proposition \ref{diffeo}, $\Phi:\left(\Sigma_1\backslash S_1\right)\longrightarrow\left(S_2\backslash S_1\right)$ is $2-1$; and since $\Phi:S_1\longrightarrow S_1$ maps $A\in S_1$ to $2A\in S_1$, $\Phi$ is clearly surjective. 
\end{proof}

\begin{theorem}\label{cover} $X_F$ is a double cover of $\Projan\mathcal{R}(N(X))$.
\end{theorem}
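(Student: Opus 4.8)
The plan is to exhibit an explicit morphism $\rho\colon X_F\to\Projan\mathcal{R}(N(X))$ and to show it is finite of degree $2$. Over the smooth locus $X_{reg}$ I would define $\rho$ as the composite of the Segre embedding $\varphi$ on the two projective factors with the map $\Phi$ of Proposition \ref{diffeo}; concretely,
$$
\rho(x,l_1,l_2)=\left(x,\,[\,l_1 l_2^t+l_2 l_1^t\,]\right),
$$
so that $\rho=\mathrm{id}_X\times(\Phi\circ\varphi)$, where $\varphi(l_1,l_2)=l_1 l_2^t$. First I would check that $\rho$ is well defined and lands in the target. For $x\in X_{reg}$ one has $l_1,l_2\in\ker F(x)$, so $l_1 l_2^t+l_2 l_1^t$ is symmetric with image contained in $\ker F(x)$; hence $F(x)\bigl(l_1 l_2^t+l_2 l_1^t\bigr)=0$, which by the Remark that $[T_{i,j}][F]=0$ furnishes equations of $\Projan\mathcal{R}(N(X))$ places the point in the target. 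Since $l_1 l_2^t+l_2 l_1^t\neq 0$ for $l_1,l_2\neq 0$, the projective point is defined, and $\rho$ extends to the closures because both $X_F$ and $\Projan\mathcal{R}(N(X))$ are by construction the closures of their parts over $X_{reg}$.

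Next I would analyze $\rho$ fiberwise. Fix $x\in X_{reg}$ and set $V=\ker F(x)$, a $2$-dimensional space. The fiber $(X_F)_x$ is $\mathbb{P}(V)\times\mathbb{P}(V)\cong\mathbb{P}^1\times\mathbb{P}^1$. On the other side, a symmetric $l$ with $F(x)l=0$ has image in $V$, and by symmetry its kernel contains $V^{\perp}$, so $l$ is determined by its restriction to $V$; thus $(\Projan\mathcal{R}(N(X)))_x\cong\mathbb{P}(\hom_s(V,V))\cong\mathbb{P}^2$. Under these identifications the fiber map is exactly $\Phi\circ\varphi$ on $V$, that is, the instance of $\Phi\colon\Sigma_1\to S_2$ for the $2$-dimensional space $V$ (the Segre image of $\mathbb{P}(V)\times\mathbb{P}(V)$ being precisely the rank-$1$ locus $\Sigma_1\subset\hom(V,V)$). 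By Proposition \ref{diffeo} and Corollary \ref{surjective}, this map is surjective and $2$-$1$, with the diagonal $l_1=l_2$ as branch locus mapping onto the rank-$1$ locus in $\hom_s(V,V)$.

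Finally I would globalize. Both $X_F$ and $\Projan\mathcal{R}(N(X))$ are irreducible of dimension $(q-3)+2=q-1$, and $\rho$ is proper, being induced by projections of products with projective spaces, with generic fiber two reduced points by the fiberwise computation. A proper, generically $2$-$1$ morphism between irreducible varieties of the same dimension is finite of degree $2$ onto its image, and surjectivity on the fibers over $X_{reg}$ together with properness forces the image to be all of $\Projan\mathcal{R}(N(X))$. Hence $X_F$ is a double cover of $\Projan\mathcal{R}(N(X))$, branched along the preimage of the rank-$1$ locus.

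I expect the main obstacle to be the behavior over the closure. Over the singular set $X\setminus X_{reg}$ the dimension of $\ker F(x)$ jumps, so the clean $\mathbb{P}^1\times\mathbb{P}^1\to\mathbb{P}^2$ picture degenerates, and one must verify that $\rho$ stays finite there and that no component of $X_F$ is contracted. This is precisely what is needed to guarantee that the covering degree is genuinely constant equal to $2$ rather than merely generically so, and it is the step where the structure of $\Phi$ along $S_1$ established in Proposition \ref{diffeo} must be used most carefully.
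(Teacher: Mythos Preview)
Your map $\rho=\mathrm{id}_X\times(\Phi\circ\varphi)$ is exactly the map the paper uses, and your fiberwise analysis over $X_{reg}$ matches theirs. The difference lies in how the two arguments handle the closure. The paper first reduces to the model case $F=\mathrm{Id}$, so that $X=S_{n-2}$ and $\Projan\mathcal{R}(N(X))=C(S_{n-2})$; then, rather than appealing to properness, it uses the $GL(n)$ action to reduce the fiberwise check to the representatives $h_r=\left(\begin{smallmatrix}I_r&0\\0&0\end{smallmatrix}\right)$ for \emph{every} $r\le n-2$, invoking Theorem~\ref{fiber} to identify $C(S_{n-2})_{h_r}$ with $\mathbb{P}(S_2)\subset\mathbb{P}(\hom_s(\mathbb{C}^{n-r},\mathbb{C}^{n-r}))$. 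On each such fiber the map is again $\Phi\circ\sigma$, now from $\mathbb{P}^{n-r-1}\times\mathbb{P}^{n-r-1}$ to $\mathbb{P}(S_2)$, and Proposition~\ref{diffeo} gives the $2$--$1$ behavior there as well. The general $F$ case then follows from a commutative square with vertical fiberwise embeddings into the model. This directly disposes of the obstacle you flag: the singular fibers are handled not by a limiting argument but by an explicit computation in the model.

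One small point: your worry that the degree might be ``merely generically'' $2$ is overstated. Once you know $\rho$ is proper and dominant between irreducible varieties of the same dimension, the degree is the degree of the function field extension and is fixed by the generic fiber; what could fail over the singular locus is \emph{finiteness}, not the value of the degree. Since the statement only asserts a double (branched) cover, your properness\,$+$\,generic-$2$-to-$1$ argument already suffices; the paper's fiber-by-fiber check buys the stronger conclusion that no fiber is contracted.
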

\begin{proof} Before we prove the general case let us consider the case where $F$ is the identity map, denoted by $Id$, $X=Id^{-1}(S_{n-2})=S_{n-2}$ and
$$
\begin{array}{rcl}
X_{Id}&=&\overline{\left\lbrace(h,l_1,l_2) | \rank h=n-2 \text{ and } (l_1,l_2)\in\mathbb{P}(\ker h)\right\rbrace}\\
\Projan\mathcal{R}(N(X))&=&C(S_{n-2}).
\end{array}
$$
Then, we are going to show that the map $\Psi:X_{Id}\longrightarrow C(S_{n-2})$ is a $2-1$ branched cover. From the group actions, it suffices to check that $\Psi$ is a $2-1$ branched cover on the fiber of $X_{Id}$ for the representatives
$$
h_r=\left(\begin{array}{c|c}
I_r & 0 \\
\hline
0&0
\end{array}\right)
$$ 
for all $r\in\{1,2,\ldots,n-2\}$. The fiber of $X_{Id}$ at each representative is equal to $\P^{n-r-1}\times\P^{n-r-1}$, since $\ker H_r=\C^{n-r}$. Now, for the fiber of $C(S_{n-2})$ at $h_r$ we are going to use the results from last section. If $r\neq n-2$, then, by theorem \ref{fiber}, the fiber of $C(S_{n-2})$ at $h_r$ is equal to $X_2(h_r)$, which means 
$$
C(S_{n-2})_{h_r}=\P(S_2)\subset\P(\Shr).
$$
If $r=n-2$, then $C(S_{n-2})_{h_{n-2}}=\P(S_2)=\P(\hom_s(\C^2,\C^2))$, since $h_{n-2}$ is a smooth point. Thus, for both cases we can consider the following map given by
$$
\begin{array}{lclc}
\varphi: & \P^{n-r-1}\times\P^{n-r-1} & \longrightarrow & \P(\Shr)\\
		& \left((S_i),(T_j)\right) & \longmapsto & \left(\begin{matrix}
												2S_1T_1 & \cdots & S_{n-r}T_1+T_{n-r}S_1\\
												\vdots & \ddots & \vdots\\												
												S_1T_{n-r}+T_1S_{n-r} & \cdots & 2S_{n-r}T_{n-r}
												\end{matrix}\right)
\end{array}
$$
This map $\varphi$ is the composition of the maps $\Phi\circ\sigma$, where $\sigma$ is the Segre embedding. Indeed, $Im(\sigma)=\P(\Sigma_1)$, which makes the composition well defined, and $\varphi(s,t)=\sigma(s,t)+\sigma(s,t)^t=\Phi(\sigma(s,t))=\P(S_2)$. Hence, $\Psi$ is a $2-1$ branched cover. The general case follows from the commutative diagram where the vertical arrows are embeddings on fibers.
$$
\xymatrix{
X_F  \ar[r] \ar[d] & \Projan\R(N(X)) \ar[d]\\
X_{Id} \ar[r]_{\Psi}       & C(S_{n-2}) }
$$
\end{proof}

As before, we use this theorem to compute the degree over the base $\mathbb{C}$ of the polar variety of dimension $1$ of $N(\mathcal{X})$, where $\mathcal{X}$ is the total space of the deformation, and a generic fiber is smooth. Consider the map
$$
\xymatrix{
 & \X\times\mathbb{P}^{n-1}\times\mathbb{P}^{n-1} \ar[dl]_{p_1} \ar[dr]^{p_2} & \\
\mathbb{P}^{n-1} & & \mathbb{P}^{n-1}}
$$
and let $h_1$ be the pullback of a hyperplane class of $\mathbb{P}^{n-1}$ via the projection map $p_1$, $h_2$ be the pullback of a hyperplane class of $\mathbb{P}^{n-1}$ via the projection map $p_2$ and $h$ be a hyperplane class on $\X\times\nP\times\nP$ defined as $h_1+h_2$. Denote the fiber over the origin in $\X$ of $\Projan\R(N(\X))$ by $E$, $p$ the projection to $\X$, and let $d$ be the dimension of $X$.

\begin{theorem}\label{C3C} Suppose $\X$ is a stabilization of $X$, with smooth base $\C$. The degree of the polar curve $\Gamma_d(N(\X))$ over $\C$ at the origin is
$$
\deg_{\C}\Gamma_d(N(\X))=\frac{1}{2}(h_1+h_2)^{d+2}\cdot\varphi^*E.
$$
\end{theorem}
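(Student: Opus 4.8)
The plan is to run the argument of Theorem \ref{HC} almost verbatim, the only genuinely new ingredient being that the map relating $X_F$ to $\Projan\R(N(\X))$ is now a true $2$-to-$1$ cover rather than an isomorphism; this is exactly what will produce the factor $\frac{1}{2}$. First I would fix the dimensions. Since $\codim S_{n-2}=3$ we have $\dim X=d=q-3$ and $\dim\X=d+1$; because the generic fiber of $\Projan\R(N(\X))\to\X$ is $\P(\hom_s(\C^2,\C^2))$, a plane, the generic rank of $N(\X)$ is $3$, so $\dim\Projan\R(N(\X))=d+3$ and the exceptional fiber $E$ over the origin has dimension $d+2$. Hence the polar curve $\Gamma_d(N(\X))$ is cut out by intersecting $\Projan\R(N(\X))$ with $d+2$ generic hyperplanes of $\P^{\frac{n(n+1)}{2}}$ and projecting to $\X$; the exponent $d+2$ in the statement is precisely $\dim E$.

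As in Theorem \ref{HC}, conservation of number identifies the degree over $\C$ of this polar curve with the degree of $E$ as a projective scheme, that is $\deg_\C\Gamma_d(N(\X))=|E\cap h^{d+2}|$, where $h$ is the hyperplane class on $\P^{\frac{n(n+1)}{2}}$. I would then invoke Theorem \ref{cover}: the symmetrized Segre map $\varphi=\Phi\circ\sigma\colon\P^{n-1}\times\P^{n-1}\to\P(\hom_s(\C^n,\C^n))$ exhibits $X_F$ as a double cover of $\Projan\R(N(\X))$. The entries of $\varphi$ are the bidegree $(1,1)$ forms $S_iT_j+S_jT_i$, so the hyperplane class pulls back as $\varphi^*h=h_1+h_2$. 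Here is the one point of departure from the hypersurface case: since $\varphi$ is generically $2$-to-$1$, a point of $E\cap h^{d+2}$ lying off the branch locus has exactly two preimages, so $|\varphi^{-1}(E)\cap\varphi^{-1}(h^{d+2})|=2\,|E\cap h^{d+2}|$; equivalently, applying the projection formula to the proper, generically finite degree-$2$ map $\varphi$ gives $\varphi^*(E)\cdot(h_1+h_2)^{d+2}=2\,(E\cdot h^{d+2})$. Solving for the polar degree yields
$$
\deg_\C\Gamma_d(N(\X))=|E\cap h^{d+2}|=\frac{1}{2}\,(h_1+h_2)^{d+2}\cdot\varphi^*(E),
$$
which is the asserted formula.

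I expect the main obstacle to be the clean justification of the factor $\frac{1}{2}$, i.e.\ that the double cover of Theorem \ref{cover} contributes exactly the degree $2$ and that its branch locus, the rank-one symmetric matrices $S_1$ identified in Proposition \ref{diffeo}, does not corrupt the count. The safe route is to argue at the level of Chow or cohomology classes: $\varphi$ is proper and generically finite of degree $2$, so $\varphi_*\varphi^*$ is multiplication by $2$, and the resulting relation among intersection numbers is insensitive to the lower-dimensional ramification; one needs only that the $d+2$ generic hyperplanes meet $E$ in finitely many points avoiding $S_1$, so that each of them has two distinct preimages upstairs. The remaining steps — the dimension count, the identity $\varphi^*h=h_1+h_2$, and the conservation-of-number reduction to $|E\cap h^{d+2}|$ — are identical in spirit to Theorem \ref{HC} and introduce no new difficulties.
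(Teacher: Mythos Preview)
Your proposal is correct and follows essentially the same route as the paper: the dimension count, the reduction via conservation of number to $|E\cap h^{d+2}|$, the use of Theorem~\ref{cover} to pass to $\X_F$, and the identification $\varphi^*h=h_1+h_2$ all match the paper's argument step for step. Your discussion of the branch locus and the projection-formula justification for the factor $\tfrac{1}{2}$ is in fact more careful than what the paper writes, but it is the same idea.
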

\begin{proof} The reasoning is similiar to the proof of theorem \ref{HC}. In this case, the normal module $N(\X)$ has $\frac{n(n+1)}{2}$ generators and generic rank $3$, so $\Projan\R(N(\X))$ has dimension $d+3$, with generic fiber of dimension $2$. The fiber $E=p^{-1}(0)$ has dimension at most one less than $\Projan\R(N(\X))$, that is, $d+2$. The polar curve $\Gamma_d(N(\X))$ is given by intersecting $\Projan\R(N(\X))$ with $d+2$ generic hyperplanes of $\P^\frac{n(n+1)}{2}$ and projecting to $\X$ by $p$. Then,
$$
deg_{\C}\Gamma_d(N(\X))=[E]\cdot[h]^{d+2}.
$$
Now, consider the diagram 
$$
\xymatrix{
\X_F\cap\varphi^{-1}(h^{d+2}) \ar[r] \ar[dr] & \Projan\R(N(\X))\cap h^{d+2} \ar[d]\\
 & \C }
$$ 
By the theorem \ref{cover}, the map from $\X_F\cap\varphi^{-1}(h^{d+2})$ to $\Projan\R(N(\X))\cap h^{d+2}$ is $2-1$, which means that if $[E]\cdot [h]^{d+2}$ has $k$ points of intersection, then $[\varphi^{-1}(E)]\cdot[\varphi^{-1}(h^{d+2})]$ has $2k$ points. Thus,
$$
\begin{array}{ccl}
[\varphi^{-1}(E)]\cdot[\varphi^{-1}(h^{d+2})] & = &\varphi^*(E)\cdot\varphi^*(h^{d+2})\\
 & = & \varphi^*(E)\cdot(h_1+h_2)^{d+2}.
\end{array}
$$
Therefore, 
 $$
\deg_{\C}\Gamma_d(N(\X))=\frac{1}{2}(h_1+h_2)^{d+2}\cdot\varphi^*E.
$$
\end{proof}

Define $\Gamma_{i,j}(N(\X))$ to be the image of the projection of $\X_F\cap h_1^i\cap h_2^j$ over $\X$. We call these \textit{mixed polars} of type $(i,j)$ of $N(\X)$. By the same argument as in the proof of theorem \ref{C3C} the degree of the mixed polars is
$$
\deg_{\C}\Gamma_{i,j}(N(\X))=h_1^ih_2^j\cdot \varphi*E.
$$  
In order to simplify notation, let us write the degree of the mixed polars as $h_1^ih_2^j$. The degree of $\Gamma_d(N(\X))$ over $\C$ is
$$
\deg_{\C}\Gamma_d(N(\X))=\frac{1}{2}\sum_{i=0}^{d+2}\left(\begin{matrix}
																		d+2\\
																		i		
																		\end{matrix}\right)h_1^ih_2^{d+2-i}.
$$

The reasoning follows by intersection theory and the last theorem.

\section{Computing the Degrees of the Mixed Polars }

The calculation of the degrees of the mixed polars defined in the last section helps us to calculate the degree of the polar varieties $\Gamma_d(N(\X))$. In \cite{Gaff1}, Gaffney and Rangachev found an algorithm to compute these mixed polars when $\X$ is a maximum rank determinantal singularity, by using the colength of some specific ideals. We are not working with the maximal minors here, but we can still use some of the ideas presented in the paper to solve our problem. Our next step is to calculate the degree of the mixed polar $\Gamma_{i,j}(N(\X))$ for any possible dimension $d$ of $X$ and $\X$ a stabilization of $X$.

The mixed polar $\Gamma_{i,j}(N(\X))$, where $i\geq j$ and $i+j=d+2$, is defined by taking the hyperplane classes $\alpha^i=[S_n=\ldots=S_{n-i+1}=0]$ and $\beta^j=[T_1=\ldots=T_j=0]$ and their pullbacks via the projections $p_1$ and $p_2$, which are:
$$
\begin{array}{l}
h_1^i=\X\times (a_1:\ldots:a_{n-i}:0:\ldots:0)\times\P^{n-1}\\
h_2^j=\X\times\P^{n-1}\times (0:\ldots:0:b_{j+1}:\ldots:b_n).
\end{array}
$$
The mixed polar defined by projecting $\X_F\cap h_1^ih_2^j$ onto $\X$ is equal to:
$$
\Gamma_{i,j}(N(\X))=\overline{\left\lbrace x\in\X_{reg} \quad\vline\quad \exists\begin{array}{r}
(a_1,\ldots,a_{n-i},0,\ldots,0)\in\ker\widetilde{F}^t(x)\\
(0,\ldots,0,b_{j+1},\ldots,b_n)\in\ker\widetilde{F}(x)
\end{array}\right\rbrace}.
$$

\begin{theorem}\label{mixedi0} The degree of the mixed polar $\Gamma_{i,0}(N(\X))$ is equal to $0$.
\end{theorem}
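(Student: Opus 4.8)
The plan is to reduce the statement to the vanishing of a single intersection number and then to a dimension count on the exceptional fibre. Since $i+j=d+2$ and $j=0$ we have $i=d+2$, so by the degree formula for mixed polars established just above this theorem,
$$
\deg_{\C}\Gamma_{i,0}(N(\X))=h_1^{d+2}h_2^{0}\cdot\varphi^*E=h_1^{d+2}\cdot\varphi^*E.
$$
Thus it suffices to prove $h_1^{d+2}\cdot\varphi^*E=0$.

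First I would identify the cycle $\varphi^*E$ geometrically. Because the covering map $\varphi\colon\X_F\to\Projan\R(N(\X))$ of Theorem \ref{cover} is a morphism over $\X$, it carries the fibre of $\X_F$ over the origin onto $E$; hence $\varphi^{-1}(E)$ is precisely the fibre over the origin of the projection $\X_F\to\X$, a cycle of dimension $d+2$ sitting inside $\{0\}\times\nP\times\nP$. Writing $p_1$ for the projection to the first $\nP$ factor, the class $h_1^{d+2}$ is the pullback by $p_1$ of the class of a generic linear subspace of codimension $d+2$, so $h_1^{d+2}\cdot\varphi^*E$ is computed by intersecting the image $p_1(\varphi^*E)$ with such a subspace. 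The whole point is that this image has dimension at most $d+1$.

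To obtain the dimension drop I would describe $\varphi^{-1}(E)$ as a limit of the product fibres of $\X_F$. Over a smooth point $x$ the fibre of $\X_F$ is $\mathbb{P}(\ker F(x))\times\mathbb{P}(\ker F(x))$, where $\ker F(x)$ is a $2$-plane; taking closures, every point of $\varphi^{-1}(E)$ has the form $(l_1,l_2)$ with $l_1,l_2$ lying in a common limiting $2$-plane $W=\lim_{x\to 0}\ker F(x)$. Consequently, for every $l_1\in p_1(\varphi^*E)$ the fibre $p_1^{-1}(l_1)\cap\varphi^*E$ contains the whole line $\mathbb{P}(W)\cong\mathbb{P}^1$ for any admissible $W\ni l_1$, since the second factor $l_2$ is constrained only to lie in the same $2$-plane and is otherwise free. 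Hence every fibre of $p_1$ restricted to $\varphi^*E$ has dimension at least one, which forces $\dim p_1(\varphi^*E)\le (d+2)-1=d+1$. A generic codimension-$(d+2)$ linear subspace of $\nP$ therefore misses $p_1(\varphi^*E)$, giving $h_1^{d+2}\cdot\varphi^*E=0$ and the theorem.

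The main obstacle is this last claim: controlling the exceptional fibre $\varphi^{-1}(E)$ well enough to guarantee the positive-dimensional $l_2$-freedom in the limit. One must verify that the product structure $\mathbb{P}(\ker F(x))\times\mathbb{P}(\ker F(x))$ of the generic fibres genuinely persists under specialization to the origin, so that no component of $\varphi^{-1}(E)$ projects dominantly to the first factor, rather than the limit acquiring spurious components on which $l_2$ is pinned down by $l_1$. I expect this to follow from the definition of $\X_F$ as the closure of the symmetric product locus together with the $GL(n,\C)$-equivariance exploited in Theorem \ref{cover}, but it is the step that requires the most care.
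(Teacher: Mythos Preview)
Your approach is the same as the paper's: both argue that, since only $h_1$-classes appear, the second factor $l_2$ is unconstrained and contributes a free $\P^1$, forcing the relevant intersection to vanish for dimension reasons. The paper's proof is in fact terser than yours and simply asserts that ``$l_2$ can be any point in $\P^1$'' without further justification, so you have correctly isolated the only point that deserves an argument.

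That point is settled by upper semicontinuity of fibre dimension and does not require any analysis of limiting $2$-planes or equivariance. Consider the projection $q_1\colon\X_F\to\X\times\nP$, $(x,l_1,l_2)\mapsto(x,l_1)$; its image is the closure $\tilde Z$ of $\{(x,l):x\in\X_{\mathrm{reg}},\ l\in\P(\ker F(x))\}$, and over $\X_{\mathrm{reg}}$ the fibre is $\P(\ker F(x))\cong\P^1$. Semicontinuity then gives $\dim q_1^{-1}(z)\ge 1$ for every $z\in\tilde Z$. Since $\varphi^{-1}(E)$ is exactly the part of $\X_F$ lying over $x=0$, each fibre of $p_1$ restricted to $\varphi^{-1}(E)$ is a union of fibres of $q_1$ and hence has dimension at least one. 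Thus $(p_1)_*[\varphi^{-1}(E)]=0$ and $h_1^{d+2}\cdot\varphi^*E=0$, with no risk of ``spurious'' components on which $l_2$ is pinned down.
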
 
\begin{proof} By definition, the degree of $\Gamma_{i,0}(N(\X))$ is the intersection number $h_1^i\cdot\varphi^*E$. The dimension of $\P(\ker\widetilde{F}(x))$ is equal to $1$, which means that any generic fiber of $\X_F$ is isomorphic to $\P^1\times\P^1$. Take a point $(x,l_1,l_2)$ in $h_1^i\cap\varphi^*E$, because we do not have a $h_2$ term, $l_2$ can be any point in $\P^1$, that is, the intersection in this case cannot be a curve. Therefore, the degree of $\Gamma_{i,0}(N(\X))$ must be $0$.
\end{proof}

To calculate the degree of the defined mixed polar, we are going to consider sets in $\Sh$, prove results about them and then pull them back to $\X$ via $\widetilde{F}$. For this, consider the sets

$$
A(i,j,n)_l=\left\lbrace h\in\Sh\quad\vline\quad\begin{array}{rcl}
																\rank(p_{l-1}\circ\pi_{n-i}\circ h)&\leq&n-i-l\\
																\rank(p_{l}\circ h\circ\phi_{n-j})&\leq&n-j-1
																\end{array}\right\rbrace	
$$
where $\pi_{n-i}:\C^n\longrightarrow\C^{n-i}$ is the projection on the first $n-i$ factors, $p_l:C^k\longrightarrow\C^{k-l}$ is the projection on the last $k-l$ factors and $\phi_{n-j}:\C^{n-j}\longrightarrow\C^n$ is the canonical embedding of $\C^{n-j}$ into $\C^n$ on the last $n-j$ factors. 

The first inequality, called \textit{row condition}, analyzes the behavior of the submatrix of $h$, of size $(n-i-l+1)\times n$, that appears below:
$$
\includegraphics[scale=0.4]{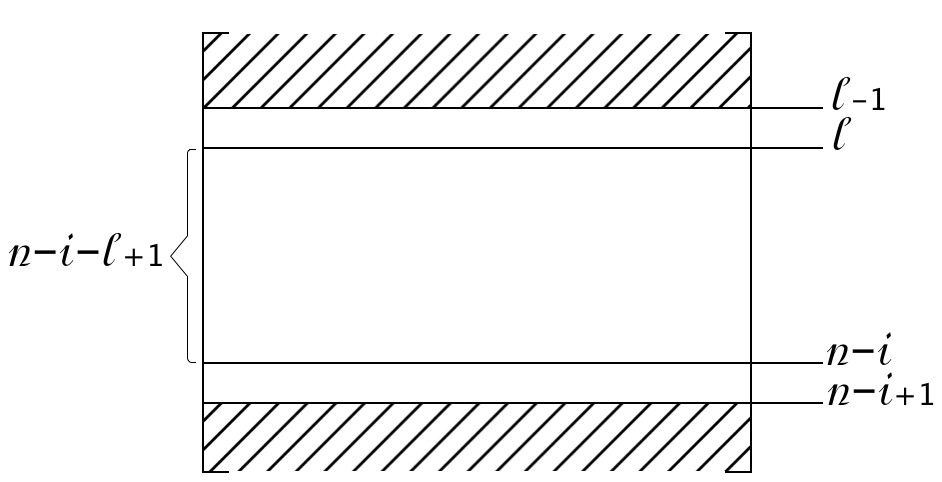}
$$
The row condition holds if this submatrix has rank less than the maximal rank. In order to have a well defined submatrix, that is, at least one row, $l$ must be less than or equal to $n-i$. Increasing $l$ by $1$ has the effect of dropping one more row on the upper side of the matrix while the bottom side remains the same, increasing the codimension by $1$. The second inequality, called \textit{column condition}, analyzes the behavoir of the submatrix of $h$, of size $(n-l)\times(n-j)$, that appears below:
$$
\includegraphics[scale=0.4]{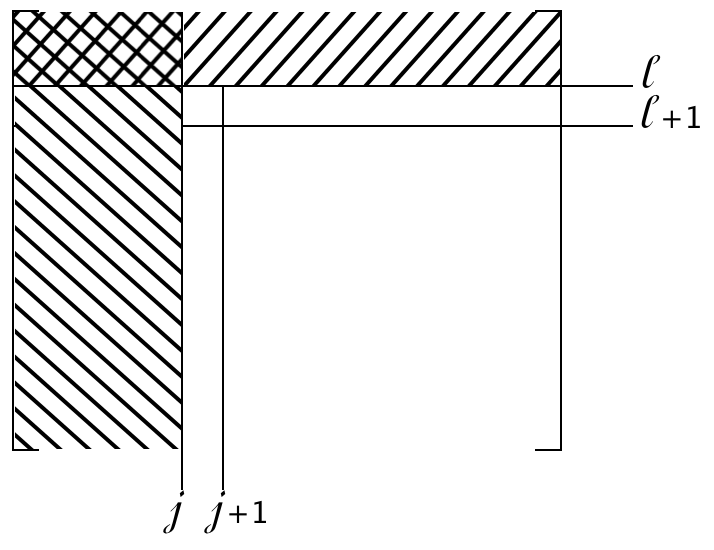}
$$
The column condition holds if the matrix has rank less than or equal to $n-j-1$. If $l=j+1$, then the column conditions holds for all $h\in\Sh$ since the resulting submatrix has exactly $n-j-1$ rows. For $l\in\{j+2,\ldots,n\}$, what happens is that $A(i,j,n)_l\subset A(i,j,n)_{j+1}$ because the row condition for $A(i,j,n)_l$ always implies the row condition for $A(i,j,n)_{l-1}$ and the column conditions for $A(i,j,n)_{j+2},\ldots,A(i,j,n)_n$ hold for all $h\in\Sh$. For this reason, we want $l$ to be less than or equal to $j+1$. Increasing $l$ by $1$ has the effect of dropping one more row on the upper side of the matrix while the bottom side remains the same, decreasing codimension by $1$. Note that the number of considered columns is the same as we change $l$. For now on we will always assume $j\leq i\leq n-1$ and $l\leq\min\{j+1,n-i\}$.

\begin{lemma} The row and column conditions for $A(i,j,n)_l$ define determinantal varieties.
\end{lemma}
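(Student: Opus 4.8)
The plan is to recognize each of the two conditions as the preimage, under a linear map, of a generic rectangular determinantal variety $\Sigma_s$ of matrices of bounded rank, which is the model determinantal variety cut out by the vanishing of minors. Once this identification is made, the determinantal character is immediate, since composing with a linear map sends minors to minors of linear forms.

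First I would unpack the two compositions. For $h\in\Sh$ with entries $h_{rc}=h_{cr}$, the map $p_{l-1}\circ\pi_{n-i}\circ h$ is linear in $h$, and reading off its matrix shows it is exactly the block formed by rows $l,\dots,n-i$ of $h$ (here $\pi_{n-i}$ keeps the first $n-i$ rows and $p_{l-1}$ discards the top $l-1$ of them), a rectangular matrix of size $(n-i-l+1)\times n$ whose entries are coordinate functions on $\Sh$. Likewise $p_l\circ h\circ\phi_{n-j}$ is the block of rows $l+1,\dots,n$ and columns $j+1,\dots,n$, of size $(n-l)\times(n-j)$, again with linear entries. These are precisely the submatrices displayed in the figures preceding the lemma.

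Then I would introduce the two linear maps
$$
R:\Sh\to\hom(\C^n,\C^{n-i-l+1}),\qquad R(h)=p_{l-1}\circ\pi_{n-i}\circ h,
$$
$$
C:\Sh\to\hom(\C^{n-j},\C^{n-l}),\qquad C(h)=p_l\circ h\circ\phi_{n-j},
$$
and observe that the row condition is $R^{-1}(\Sigma_{n-i-l})$ and the column condition is $C^{-1}(\Sigma_{n-j-1})$, where $\Sigma_s$ denotes the rank $\le s$ locus in the relevant space of rectangular matrices. Since $\Sigma_s$ is the determinantal variety defined by the $(s+1)\times(s+1)$ minors, and $R$, $C$ are linear, pulling these minors back expresses each condition as the common zero locus of minors of linear forms on $\Sh$. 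This exhibits each as a determinantal variety, namely the preimage of a generic $\Sigma_s$ under a linear map, which is exactly the notion of determinantal variety used in the general ($\ho$, $\Sigma_r$) setting of the paper.

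The step I expect to require the most care is the codimension, if one insists on the \emph{expected} codimension built into the definition. The subtlety is that neither $R$ nor $C$ is surjective onto the full rectangular matrix space: the symmetry $h_{rc}=h_{cr}$ forces the image to be a proper linear subspace. For $R$ the square block in the columns indexed by $\{l,\dots,n-i\}$ is constrained to be symmetric, while the columns with index outside this set are free; for $C$ the overlap of the row- and column-index sets $\{l+1,\dots,n\}\cap\{j+1,\dots,n\}$ imposes an analogous symmetric block, with the remaining entries free. Thus the codimension must be computed as that of $\Sigma_s$ intersected with the image subspace, and I would verify this intersection remains proper of the predicted codimension by exhibiting enough free off-block entries to guarantee transversality along the relevant stratum; this is exactly the codimension bookkeeping previewed in the paragraph before the lemma. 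For the bare assertion that the conditions \emph{define determinantal varieties}, however, the identification as preimages of $\Sigma_s$ under linear maps already suffices.
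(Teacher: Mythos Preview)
Your identification of the two conditions as preimages of rank loci under linear ``take a block'' maps $R$ and $C$ is correct and matches the paper's setup. The gap is in your last paragraph: you say that for the \emph{bare} assertion one does not need the expected codimension, but in this paper ``determinantal variety'' is \emph{defined} to mean $F^{-1}(\text{rank locus})$ \emph{with the expected codimension} (see the definition following Proposition~\ref{prop}). The paper's own proof makes this explicit: ``the only thing needed to prove is that it has the right codimension.'' So the codimension check is not optional bookkeeping; it is the content of the lemma, and it is also what the downstream argument needs (the later degree-equals-colength computations use that the $A_l(\widetilde F)$ are Cohen--Macaulay of the right codimension).

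Your sketched transversality argument does not close this gap. As you note, neither $R$ nor $C$ is surjective: the symmetry of $h$ forces a symmetric square block inside the extracted rectangle, with the remaining columns (or rows) free. The image of $R$ (resp.\ $C$) is thus not a generic linear section of $\hom(\C^n,\C^{n-i-l+1})$ (resp.\ $\hom(\C^{n-j},\C^{n-l})$), and the intersection with $\Sigma_s$ is not transverse along the open stratum; the codimension one actually gets is governed by the symmetric-block structure, not by the generic rectangular formula $(a-s)(b-s)$. The paper handles exactly this point by recognizing the block as a \emph{symmetric ladder} in the sense of Conca~\cite{SL}: one has a symmetric square $S$ together with a free rectangular flap $P$ (for the row condition) or $M$ (for the column condition), and Conca proves that the ideal of maximal minors of such a configuration is Cohen--Macaulay of the expected height. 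The paper then pulls this back via the surjective linear projection $\tau$ onto the appropriate ``partially symmetric'' matrix space $\hom_{s,(\cdot)}(\cdot,\cdot)$. That invocation of Conca is the substantive step you are missing; without it (or an equivalent dimension count tailored to the symmetric block), the lemma is not proved.
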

\begin{proof} Let us start with the row condition. Since the row condition defines a variety by taking the vanishing of some minors of a matrix, the only thing needed to prove is that it has the right codimension. For this, we are going to use the same ideas A. Conca used in the section $2.3$ of \cite{SL}. Let $S^{n-i-l}(i,j,n)_ {l,R}$ be the variety defined by the row condition and $h$ be a matrix in it. Following Conca's notation, we have $M=0$ and $N=0$. Since rows $R_1,\ldots R_{l-1}$ can vary freely, we can move the block $S$ to obtain the format presented in Conca's paper. Then, we can take $Z$ as the rectangular $(n-i-l)\times n$ submatrix:
$$
\includegraphics[scale=0.5]{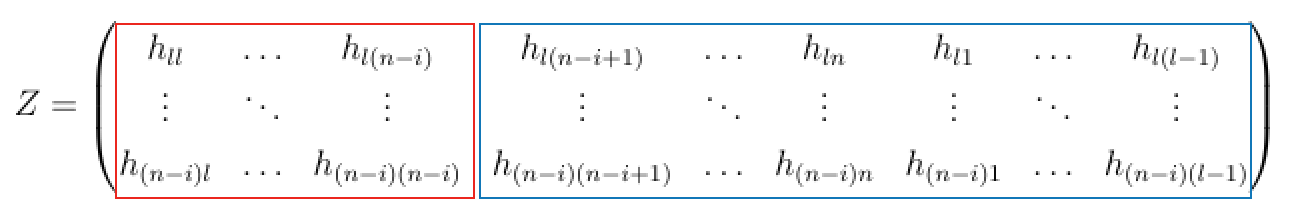}
$$
Then, $S$ is the square symmetric matrix marked as red (left block) and $P$ is the submatrix marked as blue (right block). By Conca's results $R_{n-i-l}(Z)$ is Cohen-Macaulay and also determinantal. Now, consider the projection map 
$$
\begin{array}{lclc}
\tau: & \Sh & \longrightarrow & \hom_{s,(n-i-l)}(\C^n,\C^{n-i-l})\\

		& h & \longmapsto & p_{l-1}\circ\pi_{n-i}\circ h
\end{array}
$$
where $\hom_{s,(n-i-l)}(\C^n,\C^{n-i-l})$ is the set of $(n-i-l)\times n$ matrices whose left upper $(n-i-l)\times(n-i-l)$ submatrix is symmetric. Since the preimage of $R_{n-i-l}(Z)$ is equal to $S^{n-i-l}(i,j,n)_ {l,R}$, we are done.

For the column condition, let $S^{n-j-1}(i,j,n)_ {l,R}$ be the variety defined by the row condition and $h$ be a matrix in it. Then, $Z$ is the rectangular $(n-l)\times(n-j)$ submatrix:
$$
\includegraphics[scale=0.5]{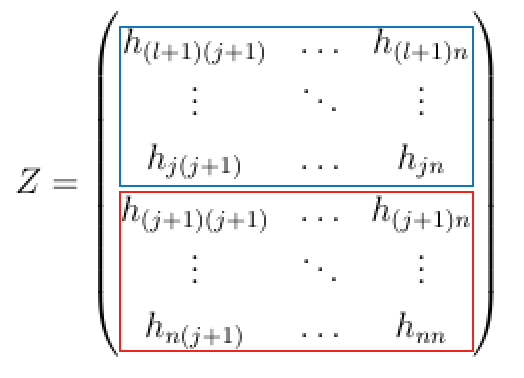}
$$
Following Conca's notation, we have $P=0$, $N=0$, $S$ is the square symmetric matrix marked as red (bottom block) and $M$ is the submatrix marked as blue (upper block). By Conca's results $R_{n-j-1}(Z)$ is Cohen-Macaulay and also determinantal. Now, consider the map 
$$
\begin{array}{lclc}
\tau: & \Sh & \longrightarrow & \hom_{s,(n-j-1)}(\C^{n-j},\C^{n-l})\\

		& h & \longmapsto & p_{l}\circ h\circ\phi_{n-j}
\end{array}
$$
where $\hom_{s,(n-j-1)}(\C^{n-j},\C^{n-l})$ is the set of $(n-l)\times(n-j)$ matrices whose bottom right $(n-j)\times(n-j)$ submatrix is symmetric. Since the preimage of $R_{n-j-1}(Z)$ is equal to $S^{n-j-1}(i,j,n)_ {l,C}$, we are done.
\end{proof}

\begin{proposition} $A(i,j,n)_l$ is a determinantal variety, for all $l$.
\end{proposition}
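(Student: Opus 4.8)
The plan is to read off $A(i,j,n)_l$ as the intersection of the two varieties just produced, namely
$$A(i,j,n)_l=S^{n-i-l}(i,j,n)_{l,R}\cap S^{n-j-1}(i,j,n)_{l,C},$$
and to leverage the previous lemma, which makes each factor a Cohen--Macaulay determinantal variety: the first cut out by the size-$(n-i-l+1)$ minors of the row-block $Z_R$ (rows $l,\dots,n-i$, all columns) and the second by the size-$(n-j)$ minors of the column-block $Z_C$ (rows $l+1,\dots,n$, columns $j+1,\dots,n$). To say that $A(i,j,n)_l$ is again determinantal is exactly to say that these two rank conditions cut each other down in the expected codimension and that the resulting scheme stays Cohen--Macaulay. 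I would obtain both at once by realizing $A(i,j,n)_l$ as a single \emph{symmetric ladder determinantal variety} and invoking Conca's structure theorem from \cite{SL}.

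Concretely, I would continue to work with the one symmetric matrix $h=(h_{pq})$ of indeterminates ($h_{pq}=h_{qp}$) and record both rank bounds as minor conditions on two nested submatrices of $h$, exactly in the ladder framework of \cite{SL}, Section 2.3. The region swept out by $Z_R$ together with the region swept out by $Z_C$ forms a staircase region, and the two prescribed sizes $n-i-l+1$ and $n-j$ are the minor sizes attached to its two corners. The standing hypotheses $j\le i\le n-1$ and $l\le\min\{j+1,\,n-i\}$ are precisely what guarantee that both blocks are non-degenerate (at least one row and one column) and that the two corners are comparable, so the rank data is a legitimate ladder datum. Conca's theorem then gives in one stroke that the associated ladder determinantal ring is Cohen--Macaulay and that its dimension, hence the codimension of $A(i,j,n)_l$, is the explicit combinatorial value, which one checks is the expected codimension. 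As in the previous lemma, the passage from the ladder ring back to $A(i,j,n)_l\subset\Sh$ is through the preimage under the relevant linear projection $\tau$, whose fibers are affine spaces; this preserves both Cohen--Macaulayness and codimension, so no separate transversality argument is required.

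The main obstacle is combinatorial, not conceptual. The two blocks genuinely overlap — they share the entries in rows $l+1,\dots,n-i$ and columns $j+1,\dots,n$ — so the conditions are not independent and one cannot simply add the two codimensions naively. The real work is therefore twofold: (i) to verify that the union of the two regions, carrying its two attached minor sizes, satisfies the monotonicity a ladder must obey in the symmetric setting of \cite{SL}; and (ii) to push through the index bookkeeping, using $j\le i\le n-1$ and $l\le\min\{j+1,n-i\}$, that matches Conca's dimension formula for this two-corner ladder against the expected codimension, thereby confirming that the intersection is proper. Once the ladder datum is shown to be admissible, Cohen--Macaulayness and normality follow automatically from Conca's theorem. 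If the ladder realization proves awkward for a given $l$, the fallback is to establish the expected codimension directly at a generic point of $A(i,j,n)_l$ (using a normal form for $h$ adapted to both blocks) and then to conclude Cohen--Macaulayness from the fact that the two minor ideals are compatible with a common straightening law, so that their sum again defines an algebra with straightening law.
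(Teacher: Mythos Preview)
Your approach differs substantially from the paper's. You attempt a direct realization of $A(i,j,n)_l$ as a two-corner symmetric ladder determinantal variety, so that Conca's structure theorem from \cite{SL} would deliver the expected codimension and Cohen--Macaulayness in one stroke. The paper instead argues by \emph{downward induction on $l$}. The base case is $l=j+1$, where the column condition is vacuous (the block has only $n-j-1$ rows) and the proposition reduces to the previous lemma applied to the row condition alone. For the inductive step, given a component $C$ of $A(i,j,n)_l$ that is not already a component of $A(i,j,n)_{l+1}$, a generic $h\in C$ has rows $R_{l+1},\dots,R_{n-i}$ linearly independent; the paper then checks that the projection from a neighborhood of $h$ inside $S^{n-i-l}_{l,R}$ to the column-block target $\hom_{s,(n-j-1)}(\C^{n-j},\C^{n-l})$ is a submersion, so the column condition cuts that component down in the expected codimension. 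Only the single-block case of Conca's results, already packaged in the lemma, is invoked at each step.

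Your route would be cleaner if it went through, but there is a genuine gap you have not closed. Conca's paper \cite{SL} treats symmetric ladder ideals generated by $t$-minors for a \emph{single} size $t$; the situation here requires two distinct minor sizes, $n-i-l+1$ and $n-j$, attached to two corners --- a cogenerated or mixed ladder ideal --- and it is not clear that \cite{SL} delivers the dimension formula and Cohen--Macaulayness for that case. You also do not carry out either of the two tasks you yourself flag as the ``real work'': verifying admissibility of the ladder datum and matching the combinatorial dimension against the expected codimension. A further wrinkle is that the row block sits in the \emph{middle} of the matrix (rows $l$ through $n-i$), not at a corner; the preceding lemma handles this for one block at a time via a rearrangement, but doing so simultaneously for both blocks so that the union becomes a bona fide symmetric ladder is exactly the combinatorial difficulty you only gesture at. The paper's inductive submersion argument sidesteps all of this, at the price of a less structural proof and one that addresses codimension component by component rather than globally.
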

\begin{proof}
Let $S^{n-i-l}_{l,R}$ be the set satisfying the row condition for $A(i,j,n)_l$ and $S^{n-j-1}_{l,C}$ the set satisfying the column condition. Considering the column condition for %TG
$A(i,j,n)_l$ and Conca's notation, let $\hom_{s,(n-j)}(n-j,n-l)$ be the set of $(n-j)\times(n-l)$ matrices whose last $(n-j)\times(n-j)$ submatrix is symmetric. The proof will be by induction. Let us first consider the set $A(i,j,n)_{j+1}$. For this set, the column condition holds for all $h$, so this set is equal to its row condition, which is determinantal by the last lemma. Now, suppose that $A(i,j,n)_{l+1}$ is determinantal and consider $C$ a component of $A(i,j,n)_l$ which is not a component of $A(i,j,n)_{l+1}$. Take $h$ a generic element of $C$ such that $h$ does not satisfies the row condition for $A(i,j,n)_{l+1}$, ie., $\rank (R_{l+1},\ldots,R_{n-i})=n-i-l$, which also means that rows $R_{l+1},\ldots,R_{n-i}$ are linearly independent and $R_l$ depends on them. Take a small neighborhood of $h$, say $U$, that does not contain any elements of $A(i,j,n)_{l+1}$. The map given by
$$
\begin{array}{lclc}
F: & U & \longrightarrow & \hom_{s,(n-j-1)}(\C^{n-j},\C^{n-l})\\
					& a_{s,t} & \longmapsto & \{a_{s,t}\mid s\geq l+1,t\geq j+1\}
\end{array}
$$ 
is a submersion. Indeed, the linear independence of rows $R_{l+1},\ldots,R_{n-i}$ and the free variation of the last $i$ rows means that there is no relation on the coordinates of the target besides the symmetry ones. By abuse of notation let us consider that $S^{n-j-1}_{l,C}$ is inside of $\hom_{s,(n-j-1)}(\C^{n-j},\C^{n-l})$. Since $F$ is a submersion, the codimension of $F^{-1}(S^{n-j-1}_{l,C})$ in $S^{n-i-l}_{l,R}$ is the same as the codimension of $S^{n-j-1}_{l,C}$ in $\hom_{s,(n-j-1)}(\C^{n-j},\C^{n-l})$. Therefore, $A(i,j,n)_l$ is determinantal.
\end{proof}

Before we state the next proposition, let us define $\Gamma_{i,j}(S_{n-2})$ as the set given by taking $F$ as the identity map in the definition of the mixed polars, that is,
$$
\Gamma_{i,j}(S_{n-2})=\overline{\left\lbrace h\in S_{n-2}\backslash S_{n-3} \quad\vline\quad \exists\begin{array}{r}
(a_1,\ldots,a_{n-i},0,\ldots,0)\in\ker(h)\\
(0,\ldots,0,b_{j+1},\ldots,b_n)\in\ker(h^t)
\end{array}\right\rbrace}.
$$

The pullback of this set by $F$ is equal to $\Gamma_{i,j}(N(\X))$. For now we are going to use it to prove some results and, at the end, pull them back to finally calculate the degree of our mixed polar.

\begin{proposition}\label{GC1}$\Gamma_{i,j}(S_{n-2})\subset A(i,j,n)_1$.
\end{proposition}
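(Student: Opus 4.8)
The plan is to use the fact, established in the preceding proposition, that $A(i,j,n)_1$ is a closed determinantal variety. Since $\Gamma_{i,j}(S_{n-2})$ is by definition the \emph{closure} of a set $W$ of symmetric matrices, it suffices to prove $W\subset A(i,j,n)_1$: the closure of a subset of a closed set stays inside it, so no separate analysis of limit points (e.g.\ matrices of rank $<n-2$) is needed. Thus I would fix $h\in S_{n-2}\backslash S_{n-3}$ together with a nonzero $a=(a_1,\dots,a_{n-i},0,\dots,0)\in\ker(h)$ and a nonzero $b=(0,\dots,0,b_{j+1},\dots,b_n)\in\ker(h^t)$, and verify the two defining rank inequalities of $A(i,j,n)_1$ directly. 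The rank $n-2$ plays no role in the argument; only the symmetry of $h$ and the supports of $a,b$ matter.

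For the row condition at $l=1$, note that $p_{l-1}=p_0$ is the identity, so $p_0\circ\pi_{n-i}\circ h$ is precisely the $(n-i)\times n$ matrix formed by the first $n-i$ rows of $h$. Using $h=h^t$, the relation $ha=0$ gives $h^t a=0$, i.e.\ $\sum_{p=1}^{n-i}a_p(\text{row}_p\,h)=0$. As $a\neq 0$ this is a nontrivial dependence among the first $n-i$ rows, whence $\rank(\pi_{n-i}\circ h)\le n-i-1$, the first inequality (which equals $n-i-l$ for $l=1$).

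For the column condition I would identify $p_1\circ h\circ\phi_{n-j}$ with the submatrix of $h$ on rows $2,\dots,n$ and columns $j+1,\dots,n$, of size $(n-1)\times(n-j)$. By symmetry $b\in\ker(h^t)=\ker(h)$, so $hb=0$, that is $\sum_{q=j+1}^{n}b_q(\text{column}_q\,h)=0$; dropping the first coordinate (applying $p_1$) preserves this relation, yielding a nontrivial dependence among the $n-j$ columns of $p_1\circ h\circ\phi_{n-j}$. Hence its rank is at most $n-j-1$, the second inequality. Combining the two, every $h\in W$ lies in $A(i,j,n)_1$, and closedness of $A(i,j,n)_1$ upgrades this to $\Gamma_{i,j}(S_{n-2})\subset A(i,j,n)_1$.

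The only genuinely delicate point is the indexing bookkeeping: one must carefully match the support of $a$ (first $n-i$ entries) with the row block cut out by $\pi_{n-i}$ and $p_{l-1}$, and the support of $b$ (last $n-j$ entries) with the column block cut out by $\phi_{n-j}$ and $p_l$, specialized to $l=1$. Once these submatrices are correctly identified, the symmetry of $h$ converts the single kernel constraint into the separate row and column dependencies automatically, and there is no real obstacle; the substantive content is the equivariant/structural set-up already done, not this verification.
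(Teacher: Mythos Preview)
Your proof is correct and follows essentially the same route as the paper: both arguments take a matrix $h$ in the defining open set, observe that the kernel vector supported in the first $n-i$ coordinates forces the first $n-i$ rows of $h$ to be dependent (the row condition at $l=1$) while the vector supported in the last $n-j$ coordinates forces columns $C_{j+1},\dots,C_n$ to be dependent (the column condition), and conclude. You are simply more explicit than the paper about invoking the symmetry of $h$ to switch between row and column kernels and about passing to the closure at the end, but the substance is identical.
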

\begin{proof}
Let $h$ be in $\Gamma_{i,j}(S_{n-2})$. Assume that our hyperplanes are chosen so that there are vectors $v=(v_1,\ldots,v_{n-1},0,\ldots,0)$, $u=(0,\ldots,0,u_{j+1},\ldots,u_n)$ with coordinates in $\C$ such that $v\cdot h=0$ and $h\cdot u=0$. The first equality says that the rows $R_1,\ldots,R_{n-i}$ are linearly dependent, which means that, $\rank(R_1,\ldots,R_{n-i})\leq n-i-1$. The second equality says that $C_{j+1},\ldots,C_n$ are linearly dependent, that is, $\rank(C_{j+1},\ldots,C_n)\leq n-j-1$. Therefore, $h\in A(i,j,n)_1$.
\end{proof}

Since $i+j+1=d+2+1=q$, these sets have the right codimension, meaning that their pullbacks will be curves on $\C^q\times\C$ and, therefore, we can calculate their degrees. The idea here is to construct a chain of sets starting with $\Gamma_{i,j}(S_{n-2})$, whose set of components is denoted by $C(i,j,n)_0$, and ending with a set which is determinantal. The degree of the mixed polar we want will be given by an alternating sum of the degrees of this sets, but in order to do that we need to understand how the chain ends. For this, we are going to analyze two cases: $n-i\leq j$ and $j<n-i$. Let us start proving the results we need for the case where $n-i\leq j$. 

\begin{proposition}\label{GC2} Suppose $n-i\leq j$. Then, $A(i,j,n)_{n-i}\subset A(i,j,n)_{n-i-1}$.
\end{proposition}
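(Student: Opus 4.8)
The plan is to unwind the definitions of the row and column conditions at the two extreme admissible indices $l=n-i$ and $l=n-i-1$ and then check the inclusion directly. First I would record that, under the hypothesis $n-i\le j$, both indices lie in the admissible range $l\le\min\{j+1,n-i\}$, so that $A(i,j,n)_{n-i}$ and $A(i,j,n)_{n-i-1}$ are genuinely defined and the column condition is non-vacuous at each. The key preliminary observation is that at $l=n-i$ the row submatrix $p_{n-i-1}\circ\pi_{n-i}\circ h$ consists of the single row $R_{n-i}$, and the bound $\rank\le n-i-l=0$ forces $R_{n-i}=0$ for every $h\in A(i,j,n)_{n-i}$.

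Next I would verify the row condition for $A(i,j,n)_{n-i-1}$. At $l=n-i-1$ the row submatrix is formed by the two rows $R_{n-i-1},R_{n-i}$, and the required bound is $\rank\le n-i-(n-i-1)=1$. Since $R_{n-i}=0$, these two rows span a space of dimension at most one, so the row condition holds; this is just the already-noted fact that the row condition at $l$ implies the row condition at $l-1$.

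The main point is the column condition, and here I expect the only real work. Passing from $l=n-i$ to $l=n-i-1$ enlarges the column submatrix $p_{l}\circ h\circ\phi_{n-j}$ from an $i\times(n-j)$ block to an $(i+1)\times(n-j)$ block by adjoining, as a new top row, the restriction of $R_{n-i}$ to the columns $j+1,\dots,n$, while the rows $R_{n-i+1},\dots,R_n$ on those columns are unchanged. In general adjoining a row can raise the rank, which is exactly why the inclusion $A_l\subset A_{l-1}$ fails for typical $l$; the special feature at $l=n-i$ is that the adjoined row is precisely $R_{n-i}$, which we have just shown to be zero. Hence the $(i+1)\times(n-j)$ column submatrix for $l=n-i-1$ has the same rank as the $i\times(n-j)$ column submatrix for $l=n-i$, namely $\le n-j-1$, and the column condition is inherited.

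Combining the three verifications yields $h\in A(i,j,n)_{n-i-1}$ for every $h\in A(i,j,n)_{n-i}$, which is the claimed inclusion. The crux, and the only place where the structure of the problem enters, is the observation that the row condition at the top index $l=n-i$ forces the very row that gets adjoined to the column submatrix to vanish; the hypothesis $n-i\le j$ serves only to keep the indices admissible and the column conditions genuine.
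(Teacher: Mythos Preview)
Your proof is correct and follows essentially the same approach as the paper: the key observation in both is that the row condition at $l=n-i$ forces $R_{n-i}=0$, and this vanishing row is precisely the one adjoined to both the row and column submatrices when passing to $l=n-i-1$, so neither rank can increase. Your treatment is slightly more explicit about the admissibility of the indices and the role of the hypothesis $n-i\le j$, but the argument is the same.
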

\begin{proof}
Let $h$ be a matrix in $A(i,j,n)_{n-i}$. The row condition for $A(i,j,n)_{n-i}$ implies that the row $R_{n-i}$ is zero, which means that $R_{n-i},R_{n-i-1}$ are linearly dependent. Hence $h$ satisfies the row condition for $A(i,j,n)_{n-i-1}$. On the other hand, the column condition for $A(i,j,n)_{n-i}$ says that the matrix given by the last $n-j$ columns and last $i$ rows of $h$ has less than maximal rank. Since $R_{n-i}$ is zero, the matrix given by the last $n-j$ columns and last $i+1$ rows of $h$ has also less than maximal rank, meaning that $h$ is in $A(i,j,n)_{n-i-1}$.
\end{proof}

\begin{lemma}\label{GP} Suppose $n-i\leq j$ and $C\in C(i,j,n)_l$ is a component of $A(i,j,n)_l$. If $h\in C$ is a generic point, then $\rank(p_{l}\circ h\circ \phi_{n-j})=n-j-1$ and $\rank(p_{l-1}\circ\pi_{n-i}\circ h)=n-i-l$.
\end{lemma}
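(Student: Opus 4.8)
The plan is to show that both inequalities defining $A(i,j,n)_l$ are generically equalities on each component, by proving that the locus where \emph{either} of the two ranks drops strictly further is a proper closed subset of $A(i,j,n)_l$ of strictly larger codimension. Once that is established, no component $C$ can be contained in such a locus, so a generic $h\in C$ must realize both ranks at their maximal admissible values $n-j-1$ and $n-i-l$.

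First I would record that, by the preceding proposition, $A(i,j,n)_l$ is determinantal of the expected codimension; as such it is Cohen--Macaulay and in particular equidimensional, so every component $C$ has codimension equal to this expected value. The submersion argument used to prove that proposition actually shows more: the expected codimension is the sum of the codimension contributed by the row condition and the codimension contributed by the column condition. The hypothesis $n-i\le j$ guarantees $l\le\min\{j+1,n-i\}=n-i$, so the row submatrix $R_l,\ldots,R_{n-i}$ has at least one row and both conditions sit in the regime where the structural results of the previous lemma apply; it also gives $j-l\ge 0$, which I will use below.

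Next I would consider the two degeneracy loci inside $A(i,j,n)_l$. Let $A'$ be the sublocus on which $\rank(p_l\circ h\circ\phi_{n-j})\le n-j-2$, cut out by imposing the strictly deeper symmetric determinantal condition on the $(n-l)\times(n-j)$ column submatrix, and let $A''$ be the sublocus on which $\rank(p_{l-1}\circ\pi_{n-i}\circ h)\le n-i-l-1$, the deeper condition on the $(n-i-l+1)\times n$ row submatrix. Rerunning the submersion-plus-Conca argument that computed $\codim A(i,j,n)_l$, but with the deeper rank bounds, shows each of these loci again attains the expected codimension for its condition, which strictly exceeds $\codim A(i,j,n)_l$. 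As a heuristic check on the generic-matrix level, the column codimension jumps from $j-l+1$ to $2(j-l+2)$ and the row codimension from $i+l$ to $2(i+l+1)$, both strict since $j-l\ge 0$.

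Finally, since $A(i,j,n)_l$ is equidimensional while $A'$ and $A''$ are proper closed subsets of strictly larger codimension (hence strictly smaller dimension), no component $C$ can lie in $A'\cup A''$; a generic $h\in C$ therefore avoids both, forcing $\rank(p_l\circ h\circ\phi_{n-j})=n-j-1$ and $\rank(p_{l-1}\circ\pi_{n-i}\circ h)=n-i-l$. I expect the main obstacle to be the bookkeeping of codimensions under the \emph{partial} symmetry of the two submatrices (the spaces $\hom_{s,\bullet}$ from the previous lemma): one must verify that the deeper determinantal loci still realize their expected codimension in the symmetric setting, which is exactly where Conca's Cohen--Macaulayness results are needed, rather than the plain codimension formula for generic matrices that I invoked above only as a sanity check.
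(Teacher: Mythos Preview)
Your approach is valid but differs from the paper's. The paper gives a direct, elementary perturbation argument rather than a codimension count. For the column rank, it takes a generic $h\in C$, drops one dependent column among $C_{j+1},\ldots,C_n$ and the rows $R_{l+1},\ldots,R_{j+1}$ to isolate a square $(n-j-1)\times(n-j-1)$ block sitting entirely in rows $\ge j+2$, perturbs its entries slightly so that it has full rank, and then adjusts the dropped column accordingly. The key point is that, because $n-i\le j$, this block lies strictly below the rows $R_l,\ldots,R_{n-i}$ that enter the row condition, so the row condition is untouched and the perturbed matrix remains in $A(i,j,n)_l$ (hence in $C$, since $h$ was generic on a single component). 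The row case is symmetric. This is a one–step, local construction that uses $n-i\le j$ only to guarantee the two submatrices can be perturbed independently; no inductive machinery is needed.

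Your route trades this explicit construction for a structural claim: the deeper loci $A'$ and $A''$ have strictly larger codimension in $\Sh$. That claim is correct, but the sentence ``rerunning the submersion-plus-Conca argument'' hides real work. The proof that $A(i,j,n)_l$ is determinantal is an induction on $l$ whose base case $l=j+1$ relies on the column condition being \emph{vacuous}; with the deeper bound $\operatorname{rank}\le n-j-2$ the base case is no longer vacuous, so the induction must be reorganized before it can be ``rerun.'' One clean way to repair this, which essentially rediscovers the paper's idea, is to observe that under $n-i\le j$ the lower-right symmetric $(n-j)\times(n-j)$ block does not meet the row-condition submatrix at all, so the row-condition variety is literally a product over $\hom_s(\mathbb{C}^{n-j},\mathbb{C}^{n-j})$; the projection to that factor is then a genuine submersion on all of $V_R$, and the deeper column locus pulls back with its expected (larger) codimension. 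Once you phrase it this way, the codimension argument and the paper's perturbation argument are two faces of the same observation; the paper's version simply avoids re-invoking Conca and the inductive bookkeeping.
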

\begin{proof}
Before we start the proof, recall that the points in $A(i,j,n)_l$ are defined by $\rank(p_{l-1}\circ\pi_{n-i}\circ h)$ and
$\rank(p_{l}\circ h\circ \phi_{n-j})$ both having less than maximal rank. Now, let $h$ be a generic point of $C$.

\begin{enumerate}
\item $\rank(p_{l}\circ h\circ \phi_{n-j})=n-j-1$.

Pick a column $C_p$ between columns $C_{j+1}\ldots C_n$ that is linearly dependent on the others and drop it. This gives us a submatrix of size $(n-l)\times(n-j-1)$. We would like to consider square submatrices, so let us drop rows $R_{l+1}\ldots R_{j+1}$. By varying the entries, as small as desired,  of the remaining square matrix we can have, as result, a matrix of rank exactly $n-j-1$. After making such perturbation, we change the entries of $C_p$ using its relation with the remaining columns and the variations on them; so then we will still have $C_p$ linearly dependent on $C_{j+1},\ldots,C_{p-1},C_{p+1},\ldots,C_n$. Since $n-i\leq j$, the row condition for $A(i,j,n)_l$ will not be affected. Therefore, the perturbation $\tilde{h}$ of $h$ lies in $A(i,j,n)_l$ and $\rank(p_{l}\circ\tilde{h}\circ \phi_{n-j})=n-j-1$. Since $h$ is a generic point, it satisfies this generic property. 

\item $\rank(p_{l-1}\circ\pi_{n-i}\circ h)=n-i-l$.

A similar argument will be made in here. Pick a row $R_p$ between rows $R_{l},\ldots, R_{n-i}$ that is linearly dependent on the others and drop it. This gives us a submatrix of size $(n-i-l)\times n$. We would like to consider square submatrices, so let us drop columns $C_1\ldots,C_l$. By varying the entries, as small as desired, of the remaining square matrix we can have, as result, a matrix of rank exactly $n-i-l$. After making such perturbation, we change the entries of $R_p$ using its relation with the remaining rows and the variations on them; so then we will still have $R_p$ linearly dependent on $R_l,\ldots,R_{p-1},R_{p+1},\ldots,R_{n-i}$. Since $n-i\leq j$, the column condition for $A(i,j,n)_l$ will not be affected. Therefore, the perturbation $\tilde{h}$ of $h$ lies in $A(i,j,n)_l$ and $\rank(p_{l-1}\circ\pi_{n-i}\circ\tilde{h})=n-i-l$. Since $h$ is a generic point, it satisfies this generic property. 
\end{enumerate}
\end{proof}

The next result is important for both cases. We will state it here and then use it again in the $j<n-i$ case.

\begin{proposition}\label{GC3} If $1\leq l<\min\{j+1,n-i\}$, then 
$$
C(i,j,n)_l\subset C(i,j,n)_{l-1}\cup C(i,j,n)_{l+1}.
$$
\end{proposition}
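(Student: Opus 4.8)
The plan is to deduce the statement from two ingredients: a set-theoretic containment $A(i,j,n)_l\subseteq A(i,j,n)_{l-1}\cup A(i,j,n)_{l+1}$, and the fact that all three varieties are equidimensional of the \emph{same} dimension. Granting these, the conclusion is immediate. If $C\in C(i,j,n)_l$ is a component of $A(i,j,n)_l$, then the containment gives $C=(C\cap A(i,j,n)_{l-1})\cup(C\cap A(i,j,n)_{l+1})$, a decomposition of the irreducible set $C$ into two closed subsets, so $C$ lies entirely in one of them, say $C\subseteq A(i,j,n)_{l-1}$. Since $A(i,j,n)_{l-1}$ is determinantal, hence Cohen--Macaulay by the preceding proposition together with Conca's results in \cite{SL}, it is equidimensional; because $C$ is a component of the equidimensional variety $A(i,j,n)_l$ we have $\dim C=\dim A(i,j,n)_l=\dim A(i,j,n)_{l-1}$, so the irreducible $C$ must coincide with one of the components of $A(i,j,n)_{l-1}$, i.e.\ $C\in C(i,j,n)_{l-1}$. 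The case $C\subseteq A(i,j,n)_{l+1}$ gives $C\in C(i,j,n)_{l+1}$ in the same way.

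The heart of the argument is the set-theoretic containment, which I would prove by a rank dichotomy on the rows $R_{l+1},\dots,R_{n-i}$. First I record the monotonicity already implicit in the definitions: the row condition gets stronger as $l$ grows (if $\rank(R_{l+1},\dots,R_{n-i})\le n-i-l-1$ then adjoining $R_l$ gives $\rank(R_l,\dots,R_{n-i})\le n-i-l$), while the column condition gets stronger as $l$ decreases (the $(l-1)$-column submatrix contains the $l$-column submatrix as a row-submatrix). Now fix $h\in A(i,j,n)_l$. Since $R_{l+1},\dots,R_{n-i}$ is a subfamily of $R_l,\dots,R_{n-i}$, its rank is at most $n-i-l$, so exactly one of two cases occurs. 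If $\rank(R_{l+1},\dots,R_{n-i})\le n-i-l-1$, then $h$ satisfies the row condition for $l+1$, and its column condition holds automatically because the $(l+1)$-column submatrix is a row-submatrix of the $l$-one; hence $h\in A(i,j,n)_{l+1}$. Otherwise $\rank(R_{l+1},\dots,R_{n-i})=n-i-l$, which forces $R_l\in\mathrm{span}(R_{l+1},\dots,R_{n-i})$; then the row condition for $l-1$ holds, and restricting this dependence to the columns $C_{j+1},\dots,C_n$ shows that the restricted $R_l$ lies in the row span of the $l$-column submatrix, so adjoining it keeps the rank at most $n-j-1$ and the column condition for $l-1$ holds; hence $h\in A(i,j,n)_{l-1}$. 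The hypotheses $l<n-i$ and $l<j+1$ are exactly what guarantees that $R_{l+1},\dots,R_{n-i}$ is nonempty and that the indices $l-1$ and $l+1$ stay in the admissible range $l\le\min\{j+1,n-i\}$.

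The remaining input, that $\dim A(i,j,n)_{l-1}=\dim A(i,j,n)_l=\dim A(i,j,n)_{l+1}$, I would extract from the codimension bookkeeping set up just before the proposition asserting that $A(i,j,n)_l$ is determinantal. There the row condition was shown to cut out a determinantal variety whose codimension increases by $1$ when $l$ increases by $1$, and the column condition to cut out one whose codimension decreases by $1$; the submersion argument in that proof shows the two codimensions add, so $\codim A(i,j,n)_l=i+j+1$ is independent of $l$. This is the step I expect to be the main obstacle, since it hinges on the row and column conditions meeting in the expected codimension despite the symmetry constraints on $\Sh$, and it is precisely here that Conca's Cohen--Macaulayness in \cite{SL} is indispensable, both to furnish equidimensionality and to justify that the intersection codimension is the sum. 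Finally I would handle the boundary value $l=1$ separately: there $A(i,j,n)_0$ is taken to be $\Gamma_{i,j}(S_{n-2})$, and the inclusion $\Gamma_{i,j}(S_{n-2})\subset A(i,j,n)_1$ of Proposition \ref{GC1}, combined with the same dimension count, identifies the relevant components, so the argument goes through without change.
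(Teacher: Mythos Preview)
Your argument is essentially the paper's own. Both rest on the same rank dichotomy for the rows $R_{l+1},\ldots,R_{n-i}$: if their rank drops below $n-i-l$ one lands in $A(i,j,n)_{l+1}$, and if not then $R_l$ lies in their span, which pushes both the row and column conditions down to level $l-1$. The paper phrases this at a generic point of a component $C$ and concludes via a Zariski-open subset argument; you phrase it as a global set-theoretic containment $A(i,j,n)_l\subseteq A(i,j,n)_{l-1}\cup A(i,j,n)_{l+1}$ followed by irreducibility and equidimensionality. These are the same proof in different clothing: the paper's final step (``a $Z$-open subset of $C$ lies in some component of $A(i,j,n)_{l\pm1}$, which means $C\in C(i,j,n)_{l\pm1}$'') already presupposes the equal-dimension fact you make explicit, and your explicit codimension bookkeeping is exactly what the preceding proposition (that each $A(i,j,n)_l$ is determinantal) supplies.

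One small point: your treatment of the boundary $l=1$ is not quite right as stated. Proposition~\ref{GC1} gives $\Gamma_{i,j}(S_{n-2})\subset A(i,j,n)_1$, which together with the dimension count shows $C(i,j,n)_0\subset C(i,j,n)_1$; that is the wrong direction for what you need. For $l=1$ you must argue directly that in the ``Case~2'' branch the generic $h$ acquires two kernel vectors of the prescribed shapes and hence lies in $S_{n-2}$, so that the component actually sits inside $\Gamma_{i,j}(S_{n-2})$. The paper's proof is equally informal at this point (it writes ``$h\in A(i,j,n)_{l-1}$'' for $l=1$ without having defined $A(i,j,n)_0$), so this is a shared wrinkle rather than a divergence in method.
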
   
\begin{proof} Let $h$ be an element in $A(i,j,n)_l$. To say that $h$ satisfies both row and column conditions is the same as saying that $p_{l-1}\circ\pi_{n-i}\circ h$ and $p_l\circ h\circ\phi_{n-j}$ are not submersions, and we are going to use this in our proof. Take $C$ an element of $C(i,j,n)_l$, that is, $C$ is one of the components of $A(i,j,n)_l$. Let $h$ be a generic point of $C$. For the row condition, $h$ has a submatrix of size $(n-i-l+1)\times n$ whose rank is equal to $n-i-l$. Consider the map $p_{l}\circ\pi_{n-i}\circ h:\C^n\longrightarrow\C^{n-i-l}$ used to define the row condition for $A(i,j,n)_{l+1}$. Our proof is divided into two situations: $p_{l}\circ\pi_{n-i}\circ h$ being a submersion or not. First, if the map is a submersion, then $\rank(p_{l}\circ\pi_{n-i}\circ h)=n-i-l$, meaning that $h$ is not in $A(i,j,n)_{l+1}$ and, therefore, $C\notin C(i,j,n)_{l+1}$. Now, we need to show that $h$ satisfies the row and column condition for $A(i,j,n)_{l-1}$. The map $p_{l}\circ\pi_{n-i}\circ h$ being a submersion means that the rows $R_{l+1},\ldots,R_{n-i}$ are linearly independent. Since, by the row condition of $A(i,j,n)_l$, $R_{l+1},\ldots,R_{n-i}$ are linearly dependent, we have that $R_l$ is dependent on $R_{l+1},\ldots,R_{n-i}$. This means that $h$ satisfies the row condition for $A(i,j,n)_{l-1}$. On the other hand, $R_l$ being dependent on $R_{l+1},\ldots,R_{n-i}$ means that $\rank(p_{l-1}\circ h\circ\phi_{n-j})=\rank(p_{l}\circ h\circ\phi_{n-j})\leq n-j-1$.Therefore, $h\in A(i,j,n)_{l-1}$ and there is a $Z$-open subset of $C$ contained in some component of $A(i,j,n)_{l-1}$, which means that $C\in C(i,j,n)_{l-1}$.

Now, if $p_{l}\circ\pi_{n-i}\circ h$ is not a submersion, then rows $R_{l+1},\ldots,R_{n-i}$ are linearly dependent, meaning that $h$ satisfies the row condition for $A(i,j,n)_{l+1}$. The column condition for $A(i,j,n)_l$ automatically implies the column condition for $A(i,j,n)_{l+1}$ since $\rank(p_{l+1}\circ h\circ\phi_{n-j})=\rank(p_{l}\circ h\circ\phi_{n-j})\leq n-j-1$. Therefore, $h\in A(i,j,n)_{l+1}$ and there is a $Z$-open subset of $C$ contained in some component of $A(i,j,n)_{l+1}$, which means that $C\in C(i,j,n)_{l+1}$.   
\end{proof}

\begin{proposition}\label{GC4} Suppose $n-i\leq j$ and $2\leq l\leq n-i$. Then,
$$
C(i,j,n)_l\cap C(i,j,n)_{l-2}=\emptyset.
$$ 
\end{proposition}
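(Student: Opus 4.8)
The plan is to argue by contradiction using the exact generic ranks supplied by Lemma \ref{GP}. Suppose some irreducible $C$ belongs to both $C(i,j,n)_l$ and $C(i,j,n)_{l-2}$, and let $h$ be a generic point of $C$. Writing $\bar R_k=(h_{k,j+1},\dots,h_{k,n})$ for the restriction of the $k$-th row of $h$ to the columns $j+1,\dots,n$, Lemma \ref{GP} (applied at level $l$) gives that the block $B_l$ with rows $l+1,\dots,n$ and columns $j+1,\dots,n$ has rank exactly $n-j-1$, while membership in $A(i,j,n)_{l-2}$ forces the larger block $B_{l-2}$ with rows $l-1,\dots,n$ and the same columns to have rank at most $n-j-1$. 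Since $B_{l-2}$ is obtained from $B_l$ by adjoining the two rows $\bar R_{l-1}$ and $\bar R_{l}$, the contradiction I am after is that at a generic $h\in C$ the row $\bar R_{l-1}$ does not lie in the row space of $B_l$; this pushes the rank of $B_{l-2}$ up to $n-j$ and violates the column condition at level $l-2$.

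The key step is to show that the $n-j$ entries $h_{l-1,m}$, $m=j+1,\dots,n$, are free coordinates on $A(i,j,n)_l$, and this is exactly where the hypothesis $n-i\le j$ enters. The defining equations of $A(i,j,n)_l$ only involve the entries of the rows $l,\dots,n-i$ (the row condition) and the entries of the block $B_l$ (the column condition). The entry $h_{l-1,m}$ lies in neither, as the index $l-1$ is below both $l$ and $l+1$; and its symmetric partner $h_{m,l-1}$ lies in neither, because $m\ge j+1>n-i$ keeps the row index out of the rows $l,\dots,n-i$, while $l-1\le n-i-1<j+1$ keeps the column index out of the columns $j+1,\dots,n$. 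Thus none of the coordinates $h_{l-1,m}$ occurs in any defining equation, so $A(i,j,n)_l$ is invariant under independent translations of these coordinates; as these translations form a connected group they preserve each irreducible component, so the projection of $C$ onto the $\bar R_{l-1}$-coordinates is dominant. Consequently a generic $h\in C$ has $\bar R_{l-1}$ generic in $\C^{\,n-j}$, hence outside the at most $(n-j-1)$-dimensional row space of $B_l$. This gives $\rank B_{l-2}=n-j$, so $h\notin A(i,j,n)_{l-2}$; since this holds on a dense subset of the irreducible $C$, we conclude $C\not\subseteq A(i,j,n)_{l-2}$, contradicting $C\in C(i,j,n)_{l-2}$. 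This argument is valid whenever $l\ge 3$, so that $l-2\ge 1$ is a genuine level to which Lemma \ref{GP} and the column condition apply.

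The main obstacle is the endpoint $l=2$, where $C(i,j,n)_0$ is by definition the set of components of $\Gamma_{i,j}(S_{n-2})$ rather than of a set cut out by the row and column conditions, and where the extra row relative to the level-$1$ block (which contains $\Gamma_{i,j}(S_{n-2})$ by Proposition \ref{GC1}) is $R_{l}=R_2$, a row that does occur in the row condition and so is not free. For this case I would argue through the kernel instead. A component of $\Gamma_{i,j}(S_{n-2})$ has generic rank $n-2$ with $\ker h=\langle v,u\rangle$, where $v$ is supported on $\{1,\dots,n-i\}$ and $u$ on $\{j+1,\dots,n\}$ by the very definition of $\Gamma_{i,j}(S_{n-2})$ together with the symmetry of $h$; on the other hand, Lemma \ref{GP} at level $2$ produces a kernel vector supported on $\{2,\dots,n-i\}$, and the disjointness of the supports of $v$ and $u$ (again from $n-i\le j$) forces this vector to be a multiple of $v$ with vanishing first coordinate. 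This pins $v$ to the hyperplane $v_1=0$, contradicting the genericity of $v$ on $\Gamma_{i,j}(S_{n-2})$ and hence ruling out $C\in C(i,j,n)_2\cap C(i,j,n)_0$. Making both the freeness claim and this endpoint analysis fully rigorous, in particular verifying that the relevant components are equidimensional of the common codimension $q$ so that containment of $C$ in $A(i,j,n)_{l-2}$ is the same as $C$ being one of its components, is the delicate part of the proof.
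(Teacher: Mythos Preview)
Your core argument for $l\ge 3$ is correct and is exactly the paper's idea, only phrased more abstractly: the paper perturbs row $R_{l-1}$ by an explicit cofactor vector $t\bar v$, observes that this leaves both the row and column conditions for $A(i,j,n)_l$ untouched (precisely your freeness check), and computes that the determinant of a suitable $(n-j)\times(n-j)$ sub-block of $p_{l-2}\circ\tilde h\circ\phi_{n-j}$ becomes nonzero, forcing $\rank B_{l-2}=n-j$. Your translation-group/dominance formulation is a clean repackaging of the same perturbation.

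Where you diverge from the paper is at $l=2$, and here your alternate kernel argument has a real gap while being unnecessary. The gap is the line ``contradicting the genericity of $v$ on $\Gamma_{i,j}(S_{n-2})$'': you have not shown that a generic point of an arbitrary component of $\Gamma_{i,j}(S_{n-2})$ has $v_1\ne 0$, and nothing in the definitions forces this. The paper does not switch arguments at $l=2$; it runs the same $R_{l-1}$-perturbation (here $R_1$), and your own freeness check already shows the entries $h_{1,m}$, $m\ge j+1$, do not occur in the defining equations of $A(i,j,n)_2$. The only extra observation needed is that $\Gamma_{i,j}(S_{n-2})$ is contained in the closed set $\{h:\rank(h\circ\phi_{n-j})\le n-j-1\}$: on the open defining locus the existence of $u\in\ker h$ supported on $\{j+1,\dots,n\}$ gives exactly this rank bound, and closedness passes it to the closure. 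Hence once the perturbed $\tilde h\in C$ has $\rank(\tilde h\circ\phi_{n-j})=n-j$, it lies outside $\Gamma_{i,j}(S_{n-2})$, so $C\notin C(i,j,n)_0$. Replace your kernel paragraph by this one-line containment and the proof is complete and matches the paper.
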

\begin{proof} The idea here is to take an element $h$ of some element $C$ of $C(i,j,n)_l$, then we vary the entries of $h$ continuously so we stay in $A(i,j,n)_l$ and the variation $\tilde{h}$ will not be in $A(i,j,n)_{l-2}$. We can also assume that $h$ is not in any other component of $A(i,j,n)_l$. Let $h$ be a generic point of $C$. Then, as seen before, $\rank(p_{l}\circ h\circ \phi_{n-j})=n-j-1$ and let $C_p$ be the column between $C_{j+1},\ldots,C_n$ which is linearly dependent on the others. Let $A$ be the square matrix given by the last $n-j-1$ rows and columns $C_{j+1}\ldots,C_{p-1},C_{p+1},\ldots,C_n$ of maximal rank, that is, its determinant is not zero. Now, consider the vector $v$ given by the cofactors $\left\lbrace c_{(j+1)(j+1)},\ldots,c_{(j+1)p},\ldots, c_{(j+1)n}\right\rbrace$ of row $R_{j+1}$. This means that $v\neq 0$ since $c_{(j+1)p}=\det A\neq 0$. Take, $\overline{v}$ the complex conjugate of $v$ and vary row $R_{l-1}$ by $t\overline{v}$. This variation does not affect the row and column conditions for $A(i,j,n)_{l}$, meaning that the resulting matrix $\tilde{h}$ is in $A(i,j,n)_{l}$ but it will not satisfy the column condition for $A(i,j,n)_{l-2}$. Indeed, $R_{l-1}$ is the first row considered in the column condition for $A(i,j,n)_{l-2}$. The variation $R_{l-1}+t\overline{v}$, for $t\neq 0$, gives us the following result when we consider all the rows starting at column $C_{j+1}$:
$$
\det\left[\begin{matrix}
R_{l-1}+t\overline{v}\\
R_{j+2}\\
\vdots\\
R_n
\end{matrix}\right]=
\det\left[\begin{matrix}
R_{l-1}\\
R_{j+2}\\
\vdots\\
R_n
\end{matrix}\right]+
t||v||^2\neq 0 . 
$$
Hence, the submatrix of $p_{l-2}\circ h\circ\phi_{n-j}$ consisting of rows $R_{l-1},R_{j+2},\ldots,R_n$ for slight variations has maximal rank, and, therefore, the column condition of $A(i,j,n)_{l-2}$ fails. Thus, $C\notin C(i,j,n)_{l-2}$. 
\end{proof}

In order to understand how the last results help us to calculate the degree of the mixed polar, let us look first at the sets $A(i,j,n)_1$, $A(i,j,n)_2$ and $A(i,j,n)_3$. Suppose $C_{21},\ldots,C_{2k}$ are the components of $A(i,j,n)_2$. By the proposition \ref{GC3}, some of these components are components of $A(i,j,n)_1$ and the others are components of $A(i,j,n)_3$. The proposition \ref{GC4} says that $A(i,j,n)_1$ and $A(i,j,n)_3$ have no components in common, therefore we can suppose that $C_{21},\ldots,C_{2p}$ are the components of $A(i,j,n)_2$ contained in $A(i,j,n)_1$ and that $C_{2(p+1)},\ldots,C_{2k}$ are the components contained in $A(i,j,n)_3$. In an alternating sum of degrees, all components of $A(i,j,n)_2$ would be canceled. Continuing this argument, we will have all the components being canceled, except for some components of $A(i,j,n)_1$ and some components of $A(i,j,n)_{n-i}$. The proposition \ref{GC2} takes care of the components of $A(i,j,n)_{n-i}$, since $A(i,j,n)_{n-i}$ is contained in $A(i,j,n)_{n-i-1}$. Therefore, the only components left are those in $A(i,j,n)_1$, which are all the components of $\Gamma_{i,j}(S_{n-2})$ by the proposition \ref{GC1} and the proposition \ref{GC4} for $l=2$.

Now, let us pull back these sets in order to obtain the degree of our mixed polars. The $A(i,j,n)_l$ are determinantal varieties, we can take hyperplanes generic enough so that $A_l(\widetilde{F})=\widetilde{F}^{-1}(A(i,j,n)_l)$ has the expected codimension $i+j+1=q$, which means we have curves on $\C^q\times\C$. Since determinantal varieties are Cohen Macaulay, the degrees of all $A_l(\widetilde{F})$ are calculated by taking the colength of the ideal given by the maximal minors. Therefore,
$$
\deg_{\C}(\Gamma_{i,j}(N(\X)))=\sum_{l=1}^{n-i}(-1)^{l+1}\deg_{\C}A_l(\widetilde{F}).
$$ 
The degrees of the sets $A_l(\widetilde{F})$ are based on the rows and columns of the map $\widetilde{F}$. However, we do not have our hands on such map, we only know the rows and columns of $F$ and, therefore, we would like to use them to find the degree of the polar variety $\Gamma_{i,j}(N(\X))$. For this, consider the projection $A_l(\widetilde{F})=A_l\longrightarrow\C$. The degree of $t$ is the colength of $(t)$ in $\O_{A_l}$ since $(A_l,0)$ is a Cohen-Macaulay variety. In this case we have
$$
\dim_{\C}\frac{\O_{A_l,0}}{(t)}=\dim_{\C}\frac{\O_{\C^q\times\C,(0,0)}}{(t,I_{A_l})}=\dim_{\C}\frac{\O_{q}}{I_{A_l}}.
$$
Therefore, for $n-i\leq j$, the degree over $\C$ of the mixed polars of $N(\X)$ is
$$
\deg_{\C}(\Gamma_{i,j}(N(\X)))=\sum_{l=1}^{n-i}(-1)^{l+1}\colength I_{A_l}.
$$

Now, let us procede to the results necessary to analyze the case where $j<n-i$. It is important to keep in mind that we will need to use some of the results previously stated.

\begin{proposition} Suppose $j<n-i$. Then, $A(i,j,n)_{j+1}\subset A(i,j,n)_l$.
\end{proposition}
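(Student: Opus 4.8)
The plan is to establish the endpoint containment complementary to \ref{GC2}: under $j<n-i$ the chain terminates at $l=j+1$, so I would prove $A(i,j,n)_{j+1}\subset A(i,j,n)_{j}$. The first observation is that the column condition defining $A(i,j,n)_{j+1}$ is vacuous. Indeed $p_{j+1}\circ h\circ\phi_{n-j}$ is built from rows $R_{j+2},\dots,R_n$ and columns $C_{j+1},\dots,C_n$, so it is an $(n-j-1)\times(n-j)$ matrix whose rank can never exceed $n-j-1$. Thus $h\in A(i,j,n)_{j+1}$ is equivalent to the single row condition $\rank(p_{j}\circ\pi_{n-i}\circ h)\le n-i-j-1$, which says that the $n-i-j$ full rows $R_{j+1},\dots,R_{n-i}$ are linearly dependent.

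To place such an $h$ in $A(i,j,n)_{j}$ I must check its two defining conditions. The row condition for $l=j$ requires $\rank(p_{j-1}\circ\pi_{n-i}\circ h)\le n-i-j$, that is, that rows $R_{j},R_{j+1},\dots,R_{n-i}$ have rank at most $n-i-j$. Since $R_{j+1},\dots,R_{n-i}$ already have rank at most $n-i-j-1$, adjoining the single row $R_{j}$ raises the rank by at most one, so this condition is immediate.

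The substance is the column condition for $l=j$, namely that $p_{j}\circ h\circ\phi_{n-j}$ --- the bottom right $(n-j)\times(n-j)$ (symmetric) block formed by rows $R_{j+1},\dots,R_n$ and columns $C_{j+1},\dots,C_n$ --- has rank at most $n-j-1$, i.e.\ is singular. Here I would transport the dependence coming from the row condition at level $j+1$: choose a nontrivial relation $\sum_{k=j+1}^{n-i}a_k R_k=0$ among the full rows. Restricting every term to the columns $C_{j+1},\dots,C_n$ keeps the relation valid, and padding with zero coefficients over $k=n-i+1,\dots,n$ exhibits a nontrivial linear dependence among all the rows of the square block. Hence the block has linearly dependent rows and its rank drops below $n-j$, as required.

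The main obstacle, and the point deserving care, is precisely this column condition: it cannot be inherited from the (vacuous) column condition at level $j+1$, so it must be extracted from the row condition at level $j+1$ by pushing the row dependence onto the square block. The bookkeeping to watch is the index ranges and the boundary case $j+1=n-i$, where the row condition forces $R_{n-i}=0$ and the block simply acquires a zero row, so singularity is visible directly.
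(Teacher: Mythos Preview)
Your argument is correct for the single case $l=j$, but the proposition asserts $A(i,j,n)_{j+1}\subset A(i,j,n)_l$ for \emph{every} admissible $l$, i.e.\ for all $1\le l\le j+1$. This full strength is what is actually used afterwards: the paper needs that the components of $A(i,j,n)_{j+1}$ sit inside $C(i,j,n)_l$ for each $l$, so that in the alternating sum these common components either cancel in pairs or leave exactly one surviving copy. Proving only the step $l=j$ is not enough for that.

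More seriously, the device you use for the column condition does not extend to $l<j$. For such $l$ the matrix $p_l\circ h\circ\phi_{n-j}$ has size $(n-l)\times(n-j)$ with strictly more rows than columns, and the requirement is that its rank be at most $n-j-1$, i.e.\ that its \emph{columns} be dependent. Exhibiting a dependence among a subset of its rows, as you do by restricting the relation $\sum_{k=j+1}^{n-i}a_k R_k=0$ to the last $n-j$ columns, says nothing about the column rank of a tall matrix. The missing idea is the symmetry of $h$: from $\sum_{k=j+1}^{n-i}a_k R_k=0$ one gets $\sum_{k=j+1}^{n-i}a_k C_k=0$ among the full columns of $h$, and this column relation survives restriction to any set of rows $R_{l+1},\dots,R_n$. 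That gives the column condition for every $l\le j+1$ at once, and the row condition for general $l$ follows by the same ``adjoin $j-l+1$ extra rows'' count you already used for $l=j$. This is exactly the route the paper takes.
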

\begin{proof} First of all, let us remember that $l\leq\min\{n-i,j+1\}$. Since, by hypothesis, $j<n-i\Rightarrow j+1\leq n-i$, we have $l\leq j+1$. In that case, let $h$ be a matrix in $A(i,j,n)_{j+1}$. By the row condition, the rows $R_{j+1},\ldots,R_{n-i}$ are linearly dependent, that is, $\rank(R_{j+1},\ldots,R_{n-i})\leq n-i-j-1$. By adding the rows $R_l,\ldots,R_j$, the rank will increase at most $j-l+1$, which means that for all $l<j+1$ $\rank(R_{j+1},\ldots,R_{n-i})\leq n-i-l$ making $h$ satisfy the row condition for all $A(i,j,n)_l$, $l<j+1$. Now, the fact that $R_{j+1},\ldots,R_{n-i}$ are linearly dependent implies that the columns $C_{j+1},\ldots,C_{n-i}$ are also linearly dependent, which means that $h\circ\Phi_{n-j}$ has less than maximal rank, satisfying the column condition for all $A(i,j,n)_l$, $l<j+1$. Therefore, $h\in A(i,j,n)_l$ for all $l<j+1$.
\end{proof}

\begin{proposition} Suppose $j<n-i$ and $3\leq l\leq j+1$. Then, 
$$
C(i,j,n)_l\cap C(i,j,n)_{l-2}=C(i,j,n)_{j+1}.
$$
\end{proposition}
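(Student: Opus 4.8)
The plan is to prove the two inclusions separately: the containment $C(i,j,n)_{j+1}\subseteq C(i,j,n)_l\cap C(i,j,n)_{l-2}$ is the easy half, while the reverse containment carries the real content.

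For $\supseteq$, note that $3\le l\le j+1$ forces $1\le l-2$ and $l,l-2\le j+1$, so the preceding proposition gives both $A(i,j,n)_{j+1}\subseteq A(i,j,n)_l$ and $A(i,j,n)_{j+1}\subseteq A(i,j,n)_{l-2}$. All of the $A(i,j,n)_m$ are determinantal varieties of one and the same codimension $i+j+1$, hence equidimensional; and if $V\subseteq W$ are equidimensional of equal dimension, every irreducible component of $V$ is a component of $W$. Applying this twice shows that each component of $A(i,j,n)_{j+1}$ is simultaneously a component of $A(i,j,n)_l$ and of $A(i,j,n)_{l-2}$, i.e. $C(i,j,n)_{j+1}\subseteq C(i,j,n)_l\cap C(i,j,n)_{l-2}$.

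For the reverse inclusion, let $C\in C(i,j,n)_l\cap C(i,j,n)_{l-2}$. When $l=j+1$ the inclusion is immediate, since then $C\in C(i,j,n)_l=C(i,j,n)_{j+1}$ by hypothesis, so assume $3\le l\le j$ and pick a generic point $h\in C$. I would dichotomize on whether $C\subseteq A(i,j,n)_{j+1}$. If it is, then since the two sets share the codimension $i+j+1$, $C$ is a component of $A(i,j,n)_{j+1}$ and we are done. If it is not, then at the generic $h$ the rows $R_{j+1},\dots,R_{n-i}$ are linearly independent, and I want to contradict $C\subseteq A(i,j,n)_{l-2}$ by reproducing the perturbation of Proposition \ref{GC4}: build the nonzero cofactor vector $v$ and vary $R_{l-1}$ by $t\overline{v}$ along the columns $C_{j+1},\dots,C_n$, so that for $t\ne 0$ the column block of $A(i,j,n)_{l-2}$ jumps to maximal rank $n-j$ and the perturbed matrix $\tilde h$ leaves $A(i,j,n)_{l-2}$. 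The point at which this regime departs from that of Proposition \ref{GC4} is that here $j+1\le n-i$, so the symmetric completion of the perturbation also disturbs the $(l-1)$-entries of $R_{j+1},\dots,R_{n-i}$, which belong to the row condition of $A(i,j,n)_l$; the independence of those rows at $h$ is exactly what leaves enough room to choose the perturbation direction tangent to $A(i,j,n)_l$, keeping $\tilde h\in C$ while destroying $A(i,j,n)_{l-2}$. This produces points of $C$ outside $A(i,j,n)_{l-2}$, contradicting $C\in C(i,j,n)_{l-2}$, so the second alternative cannot occur and $C\in C(i,j,n)_{j+1}$.

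The hard part will be making the last step rigorous: one must show that, for a component $C$ not contained in $A(i,j,n)_{j+1}$, there really is a perturbation of $R_{l-1}$ that is tangent to $A(i,j,n)_l$ (preserving both its row and column conditions) yet raises the $A(i,j,n)_{l-2}$ column rank --- equivalently, that the only obstruction to the Proposition \ref{GC4} perturbation in this regime is the collapse of $R_{j+1},\dots,R_{n-i}$ into dependence, which is precisely membership in $A(i,j,n)_{j+1}$. I expect this to require an analogue of Lemma \ref{GP} valid for $j<n-i$, pinning down the generic rank of both the row block $R_l,\dots,R_{n-i}$ and the column block on each component $C$, after which the symmetry of $h$ lets one transfer the independence of the rows to the freedom needed in the perturbation.
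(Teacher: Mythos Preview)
Your overall architecture matches the paper's: the easy inclusion via the preceding proposition plus equidimensionality, and the hard inclusion by taking $C\in C(i,j,n)_l$ with $C\notin C(i,j,n)_{j+1}$ and exhibiting a perturbation inside $C$ that violates the column condition of $A(i,j,n)_{l-2}$. You have also correctly located the obstruction: in the regime $j<n-i$ the symmetric completion of a perturbation of $R_{l-1}$ in columns $C_{j+1},\dots,C_n$ feeds back into column $C_{l-1}$ of the row block $R_{j+1},\dots,R_{n-i}$, so the Proposition~\ref{GC4} cofactor move no longer leaves the row condition of $A(i,j,n)_l$ untouched.

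Where your proposal remains a sketch is precisely this point, and the paper's fix is not the one you anticipate. The paper does \emph{not} salvage the cofactor vector $v$; instead it inserts a preliminary deformation. Using the hypothesis $C\notin C(i,j,n)_{j+1}$, the rows $R_{j+1},\dots,R_{n-i}$ are independent at a generic $h$; the paper extends these to a basis of the row space of $p_{l-1}\circ\pi_{n-i}\circ h$ by adjoining some of $R_l,\dots,R_j$, and declares the basis rows free while the remaining rows move by their fixed linear relations. This keeps the row rank constant, hence preserves membership in $C$. Within this freedom it then deforms the lower--right $(n-j)\times(n-j)$ symmetric block so that it already has rank exactly $n-j-1$, with kernel a line lying in the kernel $H$ of those basis rows among $R_{l+1},\dots,R_j$ (non-triviality of $H$ uses $n-j>j-l$). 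After this preparation the column rank $n-j-1$ of $p_l\circ h\circ\phi_{n-j}$ is carried by the lower block alone, and one can now move $R_{l-1}$ in the last $n-j$ columns so that $p_{l-2}\circ\tilde h\circ\phi_{n-j}$ jumps to rank $n-j$, breaking the $A(i,j,n)_{l-2}$ column condition.

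So your expectation that an analogue of Lemma~\ref{GP} is needed is morally right, but the actual mechanism is this two-stage deformation (basis setup, then rank normalization of the lower block) rather than a direct repair of the cofactor perturbation. If you want to complete your write-up along the paper's lines, replace the ``vary $R_{l-1}$ by $t\bar v$'' step with this preliminary normalization; the final move on $R_{l-1}$ then needs no delicate tangency argument.
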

\begin{proof}
The idea is to take an element $h$ of some component $C$ of $A(i,j,n)_l$ that it is not in any component of $A(i,j,n)_{j+1}$, then we vary entries of $h$ continuosly so we stay in $A(i,j,n)_l$ and the variation $\tilde{h}$ will not be in $A(i,j,n)_{l-2}$. Unfortunately, we cannot proceed as we did in the previous case because unlike the previous case, where there was no relation between column and row condition, here we may have a problem with a overlaping block caused by the fact that $j<n-i$. Let $h$ be a generic point of $C\in C(i,j,n,)_l$, where $C$ is not in $C(i,j,n)_{j+1}$. Since $p_j\circ\pi_{n-i}\circ h$ has maximal rank, let us freely deform $h$ by choosing a basis for the row space of $p_{l-1}\circ\pi_{n-i}\circ h$ by supplementing the rows of $p_j\circ\pi_{n-i}\circ h$, which are linearly independent, with as many additional rows as necessary. Then, in our deformation of rows $R_{j+1},\ldots,R_{n-i}$ we leave fixed the other rows in the basis, deforming the remaining rows using the relation relating each remaining row to the rows in the basis. What we are doing here is similar to the method of deformation seen in lemma \ref{GP}. This technique ensures that the rank of rows $R_{l},\ldots,R_{n-i}$ stay constant in small deformations, preserving the row condition for $A(i,j,n)_l$ and making the deformations stay inside $C$. 

The next step is to show that we can deform the lower right $(n-j)\times(n-j)$ block so that it has rank $n-j-1$, and $p_l\circ h\circ\phi_{n-j}$ has rank $n-j-1$ as well. Let $H$ be the kernel of the map defined by using the rows among $R_{l+1},\ldots,R_j$ of $h\circ\phi_{n-j}$ which are part of the basis elements for the image of $p_{l-1}\circ\pi_{n-i}\circ h$. Since $n-j>j-l$, $H$ is non-trivial. We can suppose that $\rank(p_j\circ h\circ\phi_{n-j})<n-j-1$. Then the kernel of $p_j\circ h\circ\phi_{n-j}$ must intersect $H$ non-trivially. Let $l$ be a line in this intersection. We can make small enough deformations of $p_j\circ h\circ\phi_{n-j}$ so that its kernel is exactly $l$. This includes a deformation of $p_l\circ h\circ\phi_{n-j}$ which has kernel rank $1$, and still satisfies the row condition. Since this deformation $\widetilde{h}$ is small, $\widetilde{h}$ is still in $C$ and $p_l\circ h\circ\phi_{n-j}$ has rank $n-j-1$.

Now we can deform row $R_{l-1}$ of $h\circ\phi_{n-j}$ without affecting the row condition for $A(i,j,n)_l$ so that the matrix of $p_{l-2}\circ\widetilde{h}\circ\phi_{n-j}$ has rank $n-j$ and, therefore, cannot be in $A(i,j,n)_{l-2}$.

\end{proof}
In order to state the final formula for the degree of the mixed polars when $j<n-i$, it is important to prove that $C(i,j,n)_0\cap C(i,j,n)_2=\emptyset$. For this, let $h$ be a generic element of a component $C$ of $A(i,j,n)_2$. This means that $\rank(p_{1}\circ\pi_{n-i}\circ h)=n-i-2$ and $\rank(p_{2}\circ h \Phi_{n-j})=n-j-1$. Without loss of generality we can consider $h$ as follows:
$$
h=\left(\begin{matrix}
1&0&0&0&\ldots&0&0\\
0&0&0&0&\ldots&0&0\\
0&0&1&0&\ldots&0&0\\
0&0&0&1&\ldots&0&0\\
\vdots&\vdots&\vdots&\vdots&\ddots&\vdots&\vdots\\
0&0&0&0&\cdots&1&0\\
0&0&0&0&\cdots&0&0
\end{matrix}\right)
$$
since the rank of the following $(n-i-1)\times n$ submatrix is:  
$$
\rank\left(\begin{matrix}
0&0&0&0&\ldots&0&0&\ldots&0\\
0&0&1&0&\ldots&0&0&\ldots&0\\
0&0&0&1&\ldots&0&0&\ldots&0\\
\vdots&\vdots&\vdots&\vdots&\ddots&\vdots&\vdots&\ddots&\vdots\\
0&0&0&0&\cdots&1&0&\ldots&0\\
\end{matrix}\right)= n-i-2
$$
and the rank of the following $(n-2)\times(n-j)$ submatrix is:  
$$
\rank\left(\begin{matrix}
1&0&\ldots&0&0\\
0&1&\ldots&0&0\\
\vdots&\vdots&\ddots&\vdots&\vdots\\
0&0&\ldots&1&0\\
0&0&\cdots&0&0\\
\end{matrix}\right)= n-j-1.
$$
The path $h+t(\delta_{1n}+\delta_{n1})$ satisfies the row condition for $A(i,j,n)_2$ because this condition does not include the first and last rows, and also satisfies the column condition for $A(i,j,n)_2$ because this condition does not include the first and last column either. However, the determinant of the $(n-1)\times(n-1)$ submatrix formed by dropping the second column and second row is easily seen as non-zero for $t\neq 0$. So, the generic point of the path does not lie in $S_{n-2}$, although it is in $A(i,j,n)_2$. Therefore, the component $C$ of $A(i,j,n)_2$ is not contained in any component of $\Gamma_{i,j}(S_{n-2})$.

Finally, proceeding as in the previous case we still have an alternating sum of degrees, and some of the terms will be canceled because of proposition \ref{GC3}. However, in this case the intersection is not always empty, so we need to be careful about it. Inside of each $C(i,j,n)_l$, $l\neq 0$ we have a copy of $C(i,j,n)_{j+1}$. If $j$ is odd, then all the copies will cancel each other and, again, we just need to worry about the remaining components of $A(i,j,n)_1$. But since $A(i,j,n)_2$ has no components in common with $A(i,j,n)_0$, the only components left in the alternating sum are the components of the mixed polar in question, wich means that,
$$
\deg_{\C}(\Gamma_{i,j}(N(\X)))=\sum_{l=1}^{j+1}(-1)^{l+1}\colength I_{A_l}.
$$ 
Now, if $j$ is even, then we need to take an extra term $\deg_{\C}A_{j+1}(\widetilde{F})$ into consideration. Therefore,
$$
\deg_{\C}(\Gamma_{i,j}(N(\X)))=\left(\sum_{l=1}^{j+1}(-1)^{l+1}\colength I_{A_l}\right)-\colength I_{A_{j+1}}.
$$ 

\begin{example} Consider the map
$$
\begin{array}{lclc}
F: & \mathbb{C}^5 & \longrightarrow & \hom_s(\C^4,\C^4)\\
		& (x_1,x_2,x_3,x_4,x_5) & \longmapsto & \left(\begin{array}{cccc}
												x_1&x_2&x_3&x_4\\
												x_2&x_3&x_4&x_5\\
												x_3&x_4&x_5&x_1\\
												x_4&x_5&x_1&2x_2
												\end{array}\right)
\end{array}
$$
and the symmetric determinantal variety $X=F^{-1}(S_2)$ of dimension $2$ with smoothing $\X$. First, let us calculate $\Gamma_{3,1}(N(\X))$. The set $A(3,1,n)_1$ is given by

$$
A_1(\widetilde{F})=\left\lbrace x\in\C^5\times\C \quad\vline\quad\begin{array}{r}
														\rank\left(\begin{matrix}
														x_1 & x_2 & x_3&x_4
														\end{matrix}\right)\leq 0\\
														\\
														\rank\left(\begin{matrix}
														x_3&x_4&x_5\\
														x_4&x_5&x_1\\
														x_5&x_1&2x_2
														\end{matrix}\right)\leq 2
														\end{array}\right\rbrace.
$$
The degree of $A_1(\widetilde{F})$ is the colength of
$$
I_{A_1}=(x_1,x_2,x_3,x_4,2x_2x_3x_5-x_3x_1^2+2x_1x_4x_5-2x_2x_4^2-x_5^3)
$$
which can be easily calculated by hand as $\colength I_{A_1}=3$. As in the curve case, $A_2(\widetilde{F})$ is empty. In fact, $A_1(\widetilde{F})$ is equal to $\Gamma_{3,1}(N(\X))$. Now, let us calculate $\Gamma_{2,2}(N(\X))$. The set $A_1(\widetilde{F})$ here is given by

$$
\begin{array}{l}
A_1(\widetilde{F})=\left\lbrace x\in\C^5\times\C \quad\vline\quad\begin{array}{r}
														\rank\left(\begin{matrix}
														x_1&x_2&x_3&x_4\\
														x_2&x_3&x_4&x_5
														\end{matrix}\right)\leq 1\\
														\\
														\rank\left(\begin{matrix}
														x_4&x_5\\
														x_5&x_1\\
														x_1&2x_2
														\end{matrix}\right)\leq 2
														\end{array}\right\rbrace
\end{array}.
$$
The degree of $A_1(\widetilde{F})$ is the colength of
$$
I_{A_1}=\left(\begin{array}{c}
x_1x_3-x_2^2,x_1x_4-x_2x_3,x_1x_5-x_2x_4\\
x_2x_4-x_3^2,x_2x_5-x_3x_4,x_3x_5-x_4^2\\
x_4x_1-x_5^2,2x_4x_2-x_5x_1,2x_5x_2-x_1^2
\end{array}\right).
$$
According to \textit{Singular}, $\colength I_{A_1} =12$. 

The set $A_2(\widetilde{F})$ is equal to
$$
\begin{array}{l}
A_2(\widetilde{F})=\left\lbrace x\in\C^5\times\C \quad\vline\quad\begin{array}{r}
														\rank\left(\begin{matrix}
														x_2&x_3&x_4&x_5
														\end{matrix}\right)\leq 0\\
						                                    \\
														\rank\left(\begin{matrix}
														x_5&x_1\\
														x_1&2x_2
														\end{matrix}\right)\leq 2
														\end{array}\right\rbrace
\end{array}.
$$
The degree of $A_2(\widetilde{F})$ is the colength of
$$
I_{A_2}=(x_2,x_3,x_4,x_5,2x_5x_2-x_1^2)
$$
which can be easily calculated by hand as $\colength I_{A_2}=2$. Thus,
$$
\deg_{\C}\Gamma_{2,2}(N(\X))=12-2=10.
$$
Therefore,
$$
\begin{array}{ccl}
\deg_{\C}\Gamma_2(N(\X))&=&4\deg_{\C}\Gamma_{3,1}(N(\X))+3\deg_{\C}\Gamma_{2,2}(N(\X))\\
 &=&4\cdot 3+3\cdot 10\\
 &=&42. 
\end{array}
$$
\end{example}

For more examples see \cite{SDSWE}.

 \end{document}